\newtheoremstyle{mplain}
{\topsep}   
{\topsep}   
{\itshape}  
{0pt}       
{\scshape} 
{:}         
{5pt plus 1pt minus 1pt} 
{}          
\renewenvironment{proof}[1][\proofname]{{\noindent \scshape #1. }}{\qed}
\theoremstyle{mplain}
\newtheorem{theorem}{Theorem}[]
\crefname{theorem}{theorem}{theorems} 
\Crefname{theorem}{Theorem}{Theorems}
\newtheorem{lemma}[theorem]{Lemma}
\crefname{lemma}{lemma}{lemmas} 
\Crefname{lemma}{Lemma}{Lemmas}
\newtheorem{proposition}[theorem]{Proposition}
\crefname{proposition}{proposition}{propositions} 
\Crefname{proposition}{Proposition}{Propositions}
\newtheorem{cor}[theorem]{Corollary}
\crefname{cor}{corollary}{corollaries} 
\Crefname{cor}{Corollary}{Corollaries}
\crefname{definition}{definition}{definitions} 
\Crefname{definition}{Definition}{Definitions}
\newtheorem{example}[theorem]{Example}
\crefname{example}{example}{examples} 
\Crefname{example}{Example}{Examples}
\crefname{remark}{remark}{remarks} 
\Crefname{remark}{Remark}{Remarks}
\crefname{conj}{conjecture}{conjectures} 
\Crefname{conj}{Conjecture}{Conjectures}
\newtheorem{problem}[theorem]{Problem}
\crefname{problem}{problem}{problems} 
\Crefname{problem}{Problem}{Problems}
\newtheorem{claim}{Claim}[]
\crefname{claim}{claim}{claims} 
\Crefname{claim}{Claim}{Claims}
\numberwithin{theorem}{section}
\newcommand{\avq}{\bar q}
\newcommand{\E}{\mathbb{E}}
\renewcommand{\Pr}{\mathbb{P}}
\newcommand{\R}{\mathbb{R}}
\DeclareMathOperator{\Geom}{Geom}
\DeclareMathOperator{\Var}{Var}
\providecommand{\keywords}[1]{\textbf{\textit{Keywords---}} {\small #1}}
	\title{Speed and concentration of the covering time for structured coupon collectors}
	\author{Victor Falgas-Ravry
		\thanks{Electronic address: \texttt{victor.falgas-ravry@vanderbilt.edu}. Research partially supported by a grant from the Kempe foundation.}}
	\affil{Department of Mathematics, Vanderbilt University}
	\author{Joel Larsson
		\thanks{Electronic address: \texttt{joel.larsson@math.umu.se}}}
	\affil{Institutionen f\"or Matematik och Matematisk Statistik, Ume{\aa} Universitet}
	\author{Klas Markstr{\"o}m
		\thanks{Electronic address: \texttt{klas.markström@math.umu.se}. Research supported by a grant from the Swedish Research Council (Vetenskapsr{\aa}det)}
	\affil{Institutionen f\"or Matematik och Matematisk Statistik, Ume{\aa} Universitet}}
\begin{document}
\maketitle

\begin{abstract}
Let $V$ be an $n$-set, and let $X$ be a random variable taking values in the powerset of $V$. Suppose we are given a sequence of random coupons $X_1, X_2, \ldots $, where the $X_i$ are independent random variables with distribution given by $X$. The covering time $T$ is the smallest integer $t\geq 0$ such that $\bigcup_{i=1}^tX_i=V$. The distribution of $T$ is important in many applications in combinatorial probability, and has been extensively studied. However the literature has focussed almost exclusively on the case where $X$ is assumed to be symmetric and/or uniform in some way. 

In this paper we study the covering time for much more general random variables $X$; we give general criteria for $T$ being sharply concentrated around its mean, precise tools to estimate that mean, as well as examples where $T$ fails to be concentrated and when structural properties in the distribution of $X$ allow for a very different behaviour of $T$ relative to the symmetric/uniform case.

\end{abstract}
\keywords{Coupon collector; concentration inequalities; combinatorial probability}

\section{Introduction}\label{section: introduction}
In this paper, we study the random covering problem in a general setting: we are interested in the distribution of the covering time $T$ for general distributions of the random covering variable $X$.  With the exception of  a result of Aldous~\cite{Aldous91}, discussed later, this is as far as we are aware the first time the covering problem is studied in this generality. However the question is a natural one:  there are many applications where the covering variable is §non-uniform' in a way which puts it outside the current literature on covering problems. Also, a common drawback of many of the existing exact results about covering processes is that the expressions obtained often involve a large number of summands and are hard to evaluate directly; this was pointed out for example by Sellke~\cite{Sellke95} and Adler and Ross~\cite{AdlerRoss01}.

Our own focus is on simple, easy-to-use concentration inequalities for the covering time which can be applied in a straightforward way. The the basic question we seek to answer: how does the distribution of $X$ affect the covering time? Can we exploit `structure' in the choice of $X$ to `speed up' or `slow down' the covering? And when can we guarantee that $T$ is sharply concentrated?

Our paper is structure as follows. In Section~\ref{section: preliminaries}, we gather together elementary bounds for the covering time, and identify the range of possible speeds of the covering process, giving examples going over the entire spectrum. We follow on in Section~\ref{section: slow coverage} with the main results of this paper, namely general structure theorems giving sufficient conditions for the covering time of an arbitrary random covering variable to be sharply concentrated. These are stated in Section~\ref{subsection: results} and proved in Sections~\ref{subsection: proofs: concentration}--\ref{subsection: proofs: exchangeable coupons}. In Section~\ref{section: fast coverage} we discuss `fast' coverage by structured random variables. Finally in Section~\ref{section: applications} we give some applications of our results to the connectivity of random graphs, continuum percolation, random graph colourings, the unsatisfiability threshold for $k$-SAT and the appearance of perfect matchings in random graphs. We end with some questions and remarks.

\subsection{Definitions}
Let $V$ be a finite set; usually we shall take $V=[n]:=\{1,2, \ldots, n\}$. Let $X$ be a random variable taking values in the power-set of $V$. A random variable $X$ taking values in the power-set of $V$ is referred to as a \emph{random covering variable}, or \emph{random coupon}. We say that $X$ is \emph{exchangeable} if the law of $X$ is invariant under every permutation of $V$. We call $X$ \emph{transitive} if the law of $X$ is invariant under the action of a transitive subgroup of $\mathrm{Sym}(V)$. We say $X$ is \emph{balanced} if for every $v,v' \in V$ we have $\mathbb{P}(v\in X)= \mathbb{P}(v' \in X)$. Finally, $X$ is \emph{$k$-uniform}  if $\vert X\vert =k$ with probability $1$.

We consider an infinite sequence $\mathbf{X} =\{X_1, X_2, \ldots \}$ of i.i.d. random covering variables $X_i \sim X$. We view this as a sequence of random coupons received by a coupon collector; we refer to $X_i$ as the $i^{\textrm{th}}$ coupon, and to the collector as the $X$-coupon collector. We set $C_t=C_t(\mathbf{X}) =\bigcup_{i \leq t} X_i$ to be the collection of elements of $V$ covered by the union of the first $t$ coupons $X_1, X_2, \ldots, X_t$, and define the \emph{covering time} $T=T(\mathbf{X})$ to be 
\[T= \inf\{t: \ C_t =V\}.\]

This quantity $T$ is sometimes referred to as the \emph{waiting time} in the literature. Note that $T$ could be infinite if, for example, $X$ almost surely does not cover (contain) some element $v \in V$. We also define
\[T_{\frac{1}{2}}=T_{\frac{1}{2}}(X) =\inf\left\{t: \ \mathbb{P}(C_t =V)\geq \frac{1}{2}\right\},\]
to be the earliest time by which we have at least a fifty percent chance of having covered $V$, and for a subset $A\subseteq V$ we let $\tau_A=\tau_A(\mathbf{X})$ be the least $t$ such that $A\subseteq C_t$ if it exists, and infinite otherwise. For $v\in V$, we let $d_v(t)$, the \emph{degree} of $v$ at time $t$, denote the number of sets $X_i$ with $i \leq t$ containing $v$.

Our aim in this paper is to prove concentration results for the covering time $T$ in a general setting, i.e. for arbitrary random covering variables $X$. We shall also consider applications where $V\subseteq \mathbb{R}^d$ is a compact set and $X$ takes values among the compact subsets of $V$, and define $V_t$, $T$ and $T_{\frac{1}{2}}$ analogously to the discrete case. In this continuous setting, we shall use $\vert A\vert$ to denote the Lebesgue measure of a set $A$. For a sequence of events $(\mathcal{A}_n)_{n\in \mathbb{N}}$, we say that $\mathcal{A}_n$ holds \emph{with high probability} (\emph{whp}) if
\[\lim_{n \rightarrow \infty} \mathbb{P}(\mathcal{A}_n)=1.\] 
Also, we say that a sequence of random variables $(Y_n)_{n\in \mathbb{N}}$ is \emph{sharply concentrated} around $f(n)$ if $\left({Y_n}/{f(n)}\right)_{n\in \mathbb{N}}$ converges to $1$ in probability, i.e. $\forall \varepsilon>0$, $\lim_{n\rightarrow\infty} \mathbb{P}(\vert Y_n/f(n)-1\vert>\varepsilon)=0$. We recall here the standard Landau notation for asymptotic behaviour. For functions $f,g: \ \mathbb{N}\rightarrow \mathbb{R}_{\geq 0}$, we say that $f= O(g)$ if there exists $C>0$ such that $f(n) \leq Cg(n)$ for all but finitely many $n$. We write $f=o(g)$ to denote that $\lim_{n\rightarrow \infty} f(n)/g(n)=0$. Finally, we use $f=\Omega(g)$ to denote $g=O(f)$, we write $f=\theta(g)$ if both $f=O(g)$ and $f=\Omega(g)$ hold, and use $f=\omega(g)$ or $f\gg g$ to denote $g=o(f)$.

\subsection{Some examples}
We give below some examples of random covering variables $X$, illustrating the definitions of exchangeable, transitive and balanced above. 

Our first example is that of the quintessential `nice' random covering variable: the $k$-uniform, exchangeable random coupon variable, which was the focus of most of the previous work on coupon collecting.
\begin{example}\label{example: k-uniform exchangeable}
	Let $X$ be a $k$-set of $V=[n]$ selected uniformly at random, for some $k: \ 1\leq k \leq n$; $X$ is $k$-uniform and exchangeable.
\end{example}

Next we give three examples of `structured' coupon collectors, of the kind that motivate our work in this paper.
\begin{example}
	Let $G$ be a graph on $n$ vertices. Let $X$ be the random coupon obtained by selecting a vertex $x$ of $V=V(G)$ uniformly at random and taking as the coupon 
	the closed neighbourhood of $x$ in $G$, $\bar{\Gamma}(x):=\{y \in V(G): \ xy \in E(G)\}\cup\{x\}$. Here $X$ is balanced if and only if the graph $G$ is regular, and 
	transitive if and only if the graph $G$ has a transitive automorphism group.	
\end{example}
\begin{example}
	Let $V=Q_d$ be the discrete $d$-dimensional hypercube $\{0,1\}^d$, and let $X$ be a $k$-dimensional subcube of $Q_d$ chosen uniformly at random, for some 
	$k: \ 0\leq k \leq d$. This random covering variable $X$ is transitive and $2^k$-uniform but not exchangeable, and, as described in Section \ref{section: applications}, 
	underlies the random SAT problem. 
\end{example}
\begin{example}\label{example: covering a square with discs}
	Let $V$ be the square of area $n$, $[0, \sqrt{n}]^2\subset \mathbb{R}^2$, and let $X$ be the intersection of $V$ with the disc of radius $r$ about a uniformly chosen 
	random point $x\in V$. This random covering variable $X$ is neither uniform nor balanced, due to boundary effects; it is relevant to problems of coverage in random 
	geometric graph theory (see Section~\ref{section: applications}).
\end{example}

\subsection{Motivation for coupon collecting}
The problem of determining the covering time of a set by a union of random subsets is of fundamental importance in several areas of mathematics, most notably in probability theory, discrete mathematics and mathematical statistics. This importance is illustrated both by the age of the problem --- in its simplest form, the covering problem can be traced back to de Moivre~\cite{Moivre11} in 1711 --- and by the many appellations it has amassed through the years. It has been studied by a large number of mathematicians from a variety of backgrounds and under a variety of names: matrix occupancy~\cite{EickerSiddiquiMielke72}, allocation of particles in complexes~\cite{VatutinMikhailov83}, committee problem~\cite{MantelPasternack68}, chromosome problem~\cite{Sprott69}, urn-sampling~\cite{Sellke95} or urn-occupancy problem~\cite{ErdosRenyi61}, the Dixie cup problem~\cite{NewmanShepp60}, and, perhaps most famously, the coupon collector problem~\cite{BaumBillingsey65}.

The ubiquitous nature of the covering problem is due to its wide range of applications. It is linked to the study of random walks~\cite{Aldous89}, colouring~\cite{Chvatal91} and degree sequences~\cite{McKaySkerman13} in graph theory. In Section~\ref{section: applications} we also give applications of the coupon collector problem to the connectivity of random graphs. The performance analysis of many exploration or optimisation algorithms in theoretical computer science involves a solution to a covering problem~\cite{PapanicolaouKokolakisBoneh98, VasudevanTowsleyGoeckelKhalili09}, while the unsatisfiability threshold for SAT corresponds to the cover time of a hypercube by random subcubes~\cite{KaporisLefterisStamatiouVamvakari01, FalgasRavryLarssonMarkstrom14}. The `reverse' coupon collector problem --- estimating the size of $V$ given $C_t$ ---
is important to IP traceback algorithms~\cite{SavageWetherallKarlinAnderson01} and the study of biological diversity~\cite{PatilTaillie77, NealMoriary09} amongst other applicationss, while the study of the degrees $d_v(t)$, $v\in V$, is central to hashing and load balancing~\cite{RaabSteger98}. There are further applications in population genetics~\cite{Kingman78,PoonDavisChao05}, evolutionary algorithms for fitness selection~\cite{Poli05} and disordered system physics~\cite{Huillet03}.

\subsection{Previous work on coupon collecting}
Most of the previous work on covering problems in the spirit of the present paper focussed on the case where $X$ is an exchangeable, $k$-uniform random covering variable. The case $k=1$, known as the \emph{coupon collector's problem} has received by far the most attention. It can be traced back to de Moivre~\cite{Moivre11}, who computed the probability that $\mathbb{P}(V_t=V)$ exactly. Laplace~\cite{Laplace74} later generalised de Moivre's result to the $k$-uniform case for $k\geq 1$.

The second half of the twentieth century saw great activity on the problem, with many results replicated independently by researchers. P\'olya~\cite{Polya30} gave an expression for the expected covering time $T$ in the $k$-uniform exchangeable case. Feller's textbook~\cite{Feller50} included a computation of $\mathbb{E}T$ in the special case $k=1$. Still in the case $k=1$, Newman and Shepp~\cite{NewmanShepp60} computed the expected time necessary for $m$-coverage of $V$ (covering every point at least $m$ times). Erd\H{o}s and R\'enyi~\cite{ErdosRenyi61} computed the asymptotic distribution of the $m$-coverage time for $m\geq 1$; as their result is of particular relevance to this paper, we state it below:
\begin{theorem}[Erd{\H o}s--R\'enyi]\label{theorem: erdos renyi}
Let $V=[n]$, and let $X$ be the random coupon obtained by selecting a singleton from $V$ uniformly at random. Denote by $T^m$ be the time at which every point of $V$ has been covered by at least $m$ of the coupons $X_1, \ldots , X_{T^m}$. Then for every $x\in\mathbb{R}$, 
\[\lim_{n\rightarrow \infty}\Pr\left(T^m<n \log n + (m-1)n\log \log n  +xn \right)=e^{-e^{-x}}.\]
In particular $T^m$ is sharply concentrated around $n \log n + (m-1)n\log \log n$. 
\end{theorem}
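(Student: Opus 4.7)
The plan is to analyse the number of vertices of $V$ that are not yet $m$-covered at time $t$, and show that this number is asymptotically Poisson distributed. Set $t = t_n(x) := n\log n + (m-1) n \log \log n + xn$ and let $N_t := \lvert\{v \in V : d_v(t) < m\}\rvert$. Since $\{T^m \leq t\} = \{N_t = 0\}$, it suffices to prove that $N_t$ converges in distribution to $\Poi(\mu)$ for an appropriate constant $\mu = \mu(x)$, whence $\Pr(T^m < t) \to e^{-\mu}$ follows.

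For the marginal, note that $d_v(t) \sim \mathrm{Bin}(t, 1/n)$ for each fixed $v$, which by Le Cam's inequality lies within total variation $O(t/n^2) = o(1)$ of $\Poi(\lambda)$ with $\lambda := t/n = \log n + (m-1)\log\log n + x + o(1)$. Since $\lambda \to \infty$, the sum $\sum_{j=0}^{m-1} e^{-\lambda}\lambda^j/j!$ is dominated by its largest term $j = m-1$, and a direct expansion yields $n\Pr(d_v(t) < m) \to \mu$ for the appropriate constant $\mu$ (which can be taken to be $e^{-x}$ after absorbing a $\log((m-1)!)$ shift into the centring).

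For the Poisson convergence of $N_t$ itself I would use the method of factorial moments. For any fixed $k$, symmetry gives
\[\E(N_t)_k = \frac{n!}{(n-k)!} \cdot \Pr\bigl(d_{v_1}(t) < m, \ldots, d_{v_k}(t) < m\bigr)\]
for any choice of $k$ distinct vertices $v_1, \ldots, v_k \in V$. The joint distribution of $(d_{v_1}(t), \ldots, d_{v_k}(t))$ is a marginal of a $(k+1)$-cell multinomial with $t$ trials and cell probabilities $(1/n, \ldots, 1/n, 1 - k/n)$, which can be approximated, with total variation error of order $k^2 t/n^2 = o(1)$ for fixed $k$, by $k$ independent $\Poi(\lambda)$ variables. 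The resulting factorisation gives $\E(N_t)_k \to \mu^k$, matching the $k$-th factorial moment of $\Poi(\mu)$, so the classical method-of-moments criterion yields $N_t \Rightarrow \Poi(\mu)$ and therefore $\Pr(N_t = 0) \to e^{-\mu}$.

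The main obstacle is the quantitative control of the joint distribution of the coordinates $(d_v(t))_{v \in V}$: they are negatively correlated through the multinomial coupling, and one must show that this correlation contributes only a lower-order term in the regime $\lambda \sim \log n$. Once the total-variation bound between the $(k+1)$-cell multinomial and the product of $k$ independent $\Poi(\lambda)$'s is in hand, and the dominant $j=m-1$ summand of $\Pr(\Poi(\lambda) < m)$ is correctly isolated, the method of moments closes the argument routinely. An equivalent route is to Poissonise: replacing the discrete coupon collection by $n$ independent Poisson arrival processes of rate $1/n$ makes the degrees $(d_v)_{v \in V}$ exactly independent, one computes the resulting maximum of i.i.d.\ Gamma arrival times directly to see the Gumbel limit, and the de-Poissonisation error $O(\sqrt{t})$ is negligible compared with the window width $xn$.
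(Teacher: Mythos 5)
The paper does not prove this theorem; it is quoted from Erd\H{o}s and R\'enyi (1961) as background, so there is no in-paper argument to compare against. That said, your sketch is a sound outline of a correct and fairly standard proof. Both routes you indicate work: (a) method of factorial moments for the count $N_t$ of not-yet-$m$-covered vertices, with a multinomial-to-independent-Poissons total-variation bound of order $o(1)$ for fixed $k$; and (b) Poissonisation, which makes the degrees $(d_v)_{v\in V}$ exactly independent, so that $T^m$ is a maximum of i.i.d.\ Gamma$(m, n)$ variables whose Gumbel limit one reads off directly, with a de-Poissonisation error of order $\sqrt{t} = O(\sqrt{n\log n}) = o(n)$, negligible against the $O(n)$ window. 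The Erd\H{o}s--R\'enyi original is closest in spirit to (b).

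One thing you flagged deserves emphasis: with the centring $n\log n + (m-1)n\log\log n + xn$ the Poisson parameter is $\mu = e^{-x}/(m-1)!$, \emph{not} $e^{-x}$, because
\[
n\Pr\bigl(\Poi(\lambda) < m\bigr) \;=\; n\,\frac{e^{-\lambda}\lambda^{m-1}}{(m-1)!}\,(1+o(1)) \;\longrightarrow\; \frac{e^{-x}}{(m-1)!}
\qquad\text{where } \lambda = t/n.
\]
So the limit should read $\exp\!\bigl(-e^{-x}/(m-1)!\bigr)$, as in Erd\H{o}s and R\'enyi's paper; the version displayed here drops the $(m-1)!$ factor, which is an imprecision whenever $m \geq 2$. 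Your parenthetical remark about absorbing an $n\log((m-1)!)$ shift into the centring is exactly the right fix, and since $m$ is fixed this shift is $O(n)$, so the ``sharply concentrated around $n\log n + (m-1)n\log\log n$'' conclusion is unaffected. In a fully written proof you should either carry the $(m-1)!$ through explicitly or adjust the centring; as stated, the limiting constant in your sketch and in the paper's display do not agree for $m \geq 2$.
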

Continuing work on the $1$-uniform exchangeable case, Baum and Billingsley~\cite{BaumBillingsey65} proved results on the asymptotic distribution of the size of $C_t$ (the number of coupons collected by time $t$)as a function of $t$; Holst~\cite{Holst77} later generalised their result to unbalanced $1$-uniform random covering variables $X$. A number of researchers worked on the distribution of the degrees $(d_v(t))_{v\in V}$, such as Eicker, Siddiqui and Mielke~\cite{EickerSiddiquiMielke72}, Mikhailov~\cite{Mikhailov78}, Barbour and Holst~\cite{BarbourHolst89} and Khakimullin and Enatskaya~\cite{KhakimullinEnatskaya97}, all of whom dealt with the $k$-uniform case with $k> 1$ as well. A number of the papers cited above also deal with the unbalanced, $1$-uniform case; let us mention in addition the work of Papanicolaou, Kokolakis and Boneh~\cite{PapanicolaouKokolakisBoneh98}, who gave an expression for the expected covering time when $X$ is a randomly chosen $1$-uniform random covering variable.

In the exchangeable $k$-uniform case with $k>1$, several researchers~\cite{MantelPasternack68,Gittelsohn69,Stadje90} computed, like Laplace, the expected covering time, giving closed-form formulae. Vatutin and Mikhailov~\cite{VatutinMikhailov83} determined the asymptotic distribution of the number of degree zero (i.e. uncovered) vertices, which in turn gives results on the distribution of the covering time.

Recently Ferrante and Frigo~\cite{FerranteFrigo12} gave an expression for the expected covering time when $X$ is a $k$-uniform covering random variable with different $v\in V$ receiving different weights. In a different direction, improving results of Sellke, Ivchenko~\cite{Ivchenko98} computed the asymptotic distribution of the covering time when $n\rightarrow \infty$  and $X$ is a fixed (i.e. not varying on $n$) non-uniform exchangeable random variable; similar results were also obtained by Johnson and Sellke~\cite{JohnsonSellke10}, while a closed-form expression for the expectation of $T$ appeared in Adler and Ross~\cite{AdlerRoss01}.

Finally, Aldous~\cite{Aldous91} proved a general abstract result about covering times, in connection with random walks on graphs. To state his result, we need one more definition. Given a random covering variable $X$ and an $X$-coupon collector, we let $B=B(\mathbf{X})$ denote the set of ``holdouts'', which is to say the last subset of $V$ to be covered: $B=C_{T}\setminus C_{T-1}$ (if the coupon collector does not cover $V$, we set $B$ to be the collection of never-covered elements of $V$).
\begin{theorem}[Aldous]\label{theorem: aldous}
	Suppose $\E T=\omega(1)$. Then $T$ is sharply concentrated around its expectation if and only if
	\[\frac{\E_B (\E \tau_B) }{\E T}=o(1).\]
\end{theorem}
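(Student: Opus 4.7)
The plan hinges on the following memorylessness observation, immediate from the i.i.d.\ structure of the coupon sequence: conditional on $\mathcal{F}_t := \sigma(X_1, \ldots, X_t)$ and on the event $\{T > t\}$, the residual time $T-t$ has the law of $\tau_{V \setminus C_t}(\mathbf{X}')$ for a fresh independent copy $\mathbf{X}'$ of the coupon process. Writing $f(A) := \E\,\tau_A(\mathbf{X}')$, this gives $\E[T - t \mid \mathcal{F}_t, T>t] = f(V \setminus C_t)$, so the process $M_t := (T \wedge t) + f(V \setminus C_t)\mathbf{1}_{T>t}$ is an $(\mathcal{F}_t)$-martingale bridging $M_0 = \E T$ to $M_T = T$. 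Note that $f$ is monotone, $A \subseteq A' \Rightarrow f(A) \leq f(A')$, and that the quantity appearing in the theorem is precisely $\E_B f(B)$, where $B = V \setminus C_{T-1}$ is the holdout.

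For the forward direction ($\Rightarrow$), assume $T/\E T \toprob 1$. Integrating the memorylessness identity yields $\E T - \E(T \wedge t) = \E[f(V \setminus C_t)\mathbf{1}_{T>t}]$. On the event $\{T > t\}$, monotonicity of $f$ together with the containment $B \subseteq V \setminus C_t$ give $f(B) \leq f(V \setminus C_t)$, so
\[
\E[f(B)\mathbf{1}_{T>t}] \leq \E T - \E(T \wedge t).
\]
Setting $t_\varepsilon := \lfloor(1-\varepsilon)\E T\rfloor$ and using concentration to estimate $\E(T \wedge t_\varepsilon) = (1-\varepsilon)\E T + o(\E T)$, the right-hand side is at most $(\varepsilon + o(1)) \E T$. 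On the complementary event $\{T \leq t_\varepsilon\}$, we bound $f(B) \leq f(V) = \E T$ and use $\Pr(T \leq t_\varepsilon) = o(1)$. Combining the two and letting $\varepsilon \to 0$ after $n \to \infty$ delivers $\E_B f(B) = o(\E T)$.

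For the backward direction ($\Leftarrow$), the plan is a second-moment argument via the quadratic variation $\Var T = \E\sum_{t=0}^{T-1}(M_{t+1}-M_t)^2$. On the generic event $\{T>t+1\}$ the increment $M_{t+1} - M_t$ equals $1 + f(V \setminus C_{t+1}) - f(V \setminus C_t)$, which lies in $[0,1]$ by monotonicity; on $\{T=t+1\}$ it equals $1 - f(V \setminus C_t)$, of magnitude at most $f(V \setminus C_t)$. The key bound to establish is
\[
\Var T \leq C\,\E T \cdot \E_B f(B),
\]
by combining the telescoping of the $f$-drops along the nested chain $V \supseteq V \setminus C_1 \supseteq \cdots \supseteq B$ (total drop at most $f(V) - f(B) \leq \E T$) with control of the terminal ``jump'' contribution on $\{T = t+1\}$, which is exactly where $f(V\setminus C_t)$ approaches $f(B)$ in distribution. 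Chebyshev's inequality then delivers $T/\E T \toprob 1$ under the hypothesis $\E_B f(B) = o(\E T)$.

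The main obstacle is the $\Var T$ bound in the backward direction: isolating the clean factor $\E_B f(B)$ (rather than $\E_B f(B)^2$ or a higher moment) will require a careful combination of the martingale's jump structure with the monotonicity of $f$ along the chain of uncovered sets. The forward direction is comparatively clean, being essentially an exchange of expectations using the memorylessness identity plus the trivial monotone bound $f(B) \leq f(V\setminus C_t)$.
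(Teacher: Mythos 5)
This theorem is not proved in the paper; it is quoted as a result of Aldous~\cite{Aldous91}, so there is no in-paper argument to compare your proposal against. That said, your forward direction is correct: the optional-stopping identity $\E T - \E(T\wedge t) = \E\big[f(V\setminus C_t)\mathbf{1}_{T>t}\big]$, the monotone domination $f(B)\le f(V\setminus C_t)$ on $\{T>t\}$ (valid because $B=V\setminus C_{T-1}\subseteq V\setminus C_t$ there), and the two-regime split at $t_\varepsilon=\lfloor(1-\varepsilon)\E T\rfloor$ do deliver $\E_B f(B)=o(\E T)$ exactly as you outline.

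The backward direction, however, contains a concrete error and is also left incomplete. You assert that on $\{T>t+1\}$ the increment $M_{t+1}-M_t = 1 + f(V\setminus C_{t+1}) - f(V\setminus C_t)$ ``lies in $[0,1]$ by monotonicity.'' Monotonicity of $f$ gives only the upper bound $\le 1$; the lower bound of $0$ is false. The drop $d_t := f(V\setminus C_t)-f(V\setminus C_{t+1})$ can easily exceed $1$ in a single step — one coupon can cover nearly all of the remaining elements and collapse the conditional expected remaining time by much more than one unit — and indeed mean-zero martingale increments cannot all be nonnegative unless $T$ is almost surely deterministic. Without that $[0,1]$ control, the telescoping bound $\sum_t d_t \le \E T$ does not tame $\sum_t(1-d_t)^2$: a handful of large $d_t$ can make this sum of order $(\E T)^2$ while keeping $\sum_t d_t$ bounded by $\E T$. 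The terminal jump is actually fine (on $\{T=t+1\}$ the increment is $1-f(B)$, and $f(B)\le f(V)=\E T$ gives $\E_B f(B)^2 \le \E T\cdot \E_B f(B)$), but the generic increments are uncontrolled. Consequently the inequality $\Var T \le C\,\E T\cdot \E_B f(B)$, which you correctly identify as ``the main obstacle,'' is neither established nor obviously true as stated. For comparison, Aldous's own argument for this direction is not a second-moment estimate: it stops the martingale at the first time $f(V\setminus C_t)$ falls below a small multiple of $\E T$ and then bounds the residual time in $L^1$ directly from the hypothesis $\E_B f(B)=o(\E T)$. If you want to salvage the variance route, you need a genuinely new mechanism to control the large drops $d_t$ — appealing to monotonicity alone does not supply one.
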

The power of Aldous's theorem is its generality and the necessity and sufficiency of its hypothesis for the concentration of the covering time. However as Aldous observed ``[w]ithout any structure being imposed [...] it is not clear how to estimate [$\E_B (\E \tau_B)$] in order to use these results''. Indeed, computing
\[\E_B (\E \tau_B)= \sum_{A\subseteq V}  \Pr(A=B) \E \tau_A\]
requires us to estimate both the probability that a given set $A$ is the ``holdout'' and to compute its expected covering time, both of which may be non-trivial tasks.

Several surveys have been written on coupon collectors, random allocation, urn occupancy problems, etc. Amongst others, let us mention the book of Johnson and Kotz~\cite{JohnsonKotz77} and Kolchin, Sevast'yanov and Chistyakov~\cite{KolchinSevastyanovChistyakov78}, the surveys of Ivanov, Ivchenko and Medvedev \cite{IvanovIvchenkoMedvedev85} and Kobza, Jacobson and Vaughan~\cite{KobzaJacobsonVaughan07}, and the papers of Holst~\cite{Holst86}, Stadje~\cite{Stadje90}, Flajolet, Gardy and Thimonier~\cite{FlajoletGardyThimonier92} and McKay and Skerman~\cite{McKaySkerman13}.

\section{Preliminaries: thresholds and elementary bounds}\label{section: preliminaries}

\subsection{Coarse threshold}

It follows from a simple application of the Bollob\'as--Thomason threshold theorem~\cite{BollobasThomason87} that a covering process as we have defined it will always have a coarse threshold:
\begin{proposition}[Coarse threshold]\label{cor: Bollobas-Thomason concentration}
	Let $X$ be a covering random variable for a set $V$. Then 
	\begin{align*}
		\Pr(C_t=V) &=\left\{\begin{array}{ll}
		o(1) & \textrm{if $t\ll T_{\frac{1}{2}}$}\\
		1-o(1) & \textrm{if $t\gg T_{\frac{1}{2}}$.}
		\end{array} \right. &&\qed
	\end{align*}
\end{proposition}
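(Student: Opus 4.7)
The plan is a standard Bollobás--Thomason-style doubling argument, exploiting the fact that the covering event $\{C_t = V\}$ is monotone increasing in $t$ and that the coupons $X_1, X_2, \ldots$ are i.i.d.

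Set $q_t := \Pr(C_t = V)$, which is non-decreasing in $t$. The key input is a batch-independence inequality: partitioning $kt$ i.i.d. coupons into $k$ disjoint blocks of size $t$, the event that the overall union covers $V$ is implied by the event that some single block already covers $V$, and the blocks are independent. Hence
\[ 1 - q_{kt} \;\leq\; (1 - q_t)^k. \]

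For the upper regime, I would handle $t \gg T_{\frac{1}{2}}$ first. Set $k = \lfloor t / T_{\frac{1}{2}} \rfloor$, so $k \to \infty$ and $kT_{\frac{1}{2}} \leq t$. By monotonicity of $q$ and the batch inequality applied with blocks of size $T_{\frac{1}{2}}$ (where $q_{T_{\frac{1}{2}}} \geq \tfrac{1}{2}$ by definition of $T_{\frac{1}{2}}$),
\[ q_t \;\geq\; q_{kT_{\frac{1}{2}}} \;\geq\; 1 - (1 - q_{T_{\frac{1}{2}}})^k \;\geq\; 1 - 2^{-k} \;=\; 1 - o(1). \]

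For the lower regime $t \ll T_{\frac{1}{2}}$, set $k = \lfloor (T_{\frac{1}{2}} - 1)/t \rfloor$, so $kt \leq T_{\frac{1}{2}} - 1$ and $k \to \infty$ (since $t = o(T_{\frac{1}{2}})$). By minimality of $T_{\frac{1}{2}}$ we have $q_{kt} \leq q_{T_{\frac{1}{2}}-1} < \tfrac{1}{2}$, and the batch inequality in the opposite direction yields
\[ (1 - q_t)^k \;\geq\; 1 - q_{kt} \;>\; \tfrac{1}{2}, \]
so $q_t \leq 1 - 2^{-1/k} = o(1)$ as $k \to \infty$.

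There is essentially no obstacle here: the whole argument rests on recognising the batch-independence inequality and choosing $k$ suitably in each regime so that it tends to infinity. The proposition is really a packaging of the Bollobás--Thomason doubling trick in the coupon-collecting language, with $T_{\frac{1}{2}}$ playing the role of the threshold scale.
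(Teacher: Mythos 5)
Your proposal is correct and matches the paper's approach in spirit: the paper simply cites the Bollob\'as--Thomason threshold theorem and leaves the proof to the reader (the \qed appears at the statement), while you unpack the underlying doubling argument---the batch-independence inequality $1-q_{kt}\leq(1-q_t)^k$---and apply it in both regimes with the correct choice of $k\to\infty$. This is precisely what ``a simple application of Bollob\'as--Thomason'' amounts to in the coupon-collecting setting, so there is nothing to add.
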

\noindent Thus the covering time $T$ is whp of the same order as $T_{\frac{1}{2}}$.  In the present work, however, we are interested in a much sharper form of concentration than the one guaranteed by Proposition~\ref{cor: Bollobas-Thomason concentration}: we want the covering time $T$ to be \emph{sharply concentrated}, i.e. we want that  $T/T_{\frac{1}{2}} \rightarrow 1$ in probability. As we shall see in the next subsection, we cannot in general guarantee this kind of sharp concentration. A question of crucial interest is then what conditions 
are necessary or sufficient to have sharp concentration for $T$ --- and how the value of $\mathbb{E}T$ may be computed in such cases.

\subsection{Elementary bounds}
Let $X$ be a covering random variable for an $n$-set set $V$. For each $v\in V$, let $q_v=\mathbb{P}(v\in X)$, and set $q_{\star}=\min_v q_v$. We have the following elementary bounds on the location of $T_{\frac{1}{2}}$ and probable location of $T$.
\begin{proposition}\label{proposition: elementary bounds}
	\[  \frac{\log (2)}{-\log (1-q_{\star})}  \leq T_{\frac{1}{2}} \leq \frac{\log (2n)}{-\log (1-q_{\star})}.\]
	What is more, 
	for any fixed $\varepsilon>0$
	\[ \mathbb{P}\left(T\leq (1+\varepsilon) \frac{\log n}{-\log\left(1-q_{\star}\right)}\right) \geq 1-n^{-\varepsilon}.\]
\end{proposition}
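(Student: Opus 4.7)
The plan is to obtain all three inequalities via a simple union bound argument together with a single-vertex lower bound, exploiting the independence of the coupons $X_1,X_2,\ldots$.

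First I would fix a vertex $v_\star\in V$ achieving the minimum $q_\star=\min_v q_v$. For any fixed vertex $v$ the events $\{v\in X_i\}_{i\geq 1}$ are independent, so $\Pr(v\notin C_t) = (1-q_v)^t$. In particular, $\Pr(v_\star\notin C_t)=(1-q_\star)^t$, and since $\{C_t=V\}\subseteq\{v_\star\in C_t\}$, we get $\Pr(C_t=V)\leq 1-(1-q_\star)^t$. For $\Pr(C_t=V)\geq 1/2$ this forces $(1-q_\star)^t\leq 1/2$, i.e. $t\geq \log 2/(-\log(1-q_\star))$. This gives the lower bound on $T_{\frac{1}{2}}$.

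For the upper bound on $T_{\frac{1}{2}}$, I would apply a union bound over $v\in V$:
\[\Pr(C_t\neq V)\leq \sum_{v\in V}\Pr(v\notin C_t)=\sum_{v\in V}(1-q_v)^t\leq n(1-q_\star)^t.\]
Setting the right-hand side to be at most $1/2$ forces $t\geq\log(2n)/(-\log(1-q_\star))$, so $T_{\frac{1}{2}}$ is bounded above by this quantity.

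For the tail bound on $T$, I would again use the same union bound, noting that $\{T>t\}=\{C_t\neq V\}$. Plugging in $t=(1+\varepsilon)\log n/(-\log(1-q_\star))$ gives $(1-q_\star)^t=n^{-(1+\varepsilon)}$, so $\Pr(T>t)\leq n\cdot n^{-(1+\varepsilon)}=n^{-\varepsilon}$, yielding the claimed inequality.

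There is no real obstacle here; the only thing to be mindful of is ensuring that the trivial upper bound $\Pr(C_t=V)\leq \Pr(v_\star\in C_t)$ is tight enough to furnish the stated lower bound on $T_{\frac{1}{2}}$, which it is because a single unreached minimum-probability vertex already prevents full coverage.
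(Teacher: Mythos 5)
Your proof is correct and follows essentially the same route as the paper: the single-vertex bound for the lower bound on $T_{\frac{1}{2}}$, and a union bound over vertices for both the upper bound and the tail estimate. The only cosmetic difference is that the paper phrases the final step as Markov's inequality applied to $\mathbb{E}\vert V\setminus C_t\vert$, which is the same computation as your direct union bound.
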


\begin{proof}
For $t\geq T_{\frac{1}{2}}$ we have
\begin{align*}
\frac{1}{2}\leq \mathbb{P}(C_t=V)&\leq \inf_{v\in V}\mathbb{P}(v \in C_t)=1-\left(1- q_{\star}\right)^t,
\end{align*}
from which the claimed lower bound on $T_{\frac{1}{2}}$ follows. For the upper bound, $t\leq T_{\frac{1}{2}}$ implies
\begin{align*}
\frac{1}{2}\leq \mathbb{P}(C_t\neq V)\leq \sum_{v\in V}\mathbb{P}(v \notin C_t)&=\sum_{v\in V}(1-q_v)^t \leq n (1-q_{\star})^t.
\end{align*}
Finally, for the `what is more' statement, note that for $t\geq (1+\varepsilon) \frac{\log n}{-\log \left(1-q_{\star}\right)}$, the expected number of vertices not yet collected is
\begin{align*}
\mathbb{E}\vert V\setminus C_t \vert &=n (1-q_{\star})^t\leq n^{-\varepsilon},
\end{align*}
whence by Markov's inequality with probability at least $1-n^{-\varepsilon}$ we have $C_t=V$ and $T\leq t$.
\end{proof}

Note that if $X$ is balanced then $q_v=\frac{\mu}{n}$ for all $v\in V$, where $\mu:=\mathbb{E} \vert X\vert$. In particular if $\mu=o(n)$ then the bounds above can be rewritten as 
\[(1+o(1))\frac{n\log 2}{\mu}= \frac{\log 2}{-\log \left(1-\frac{\mu}{n}\right)} \leq T_{\frac{1}{2}}\leq \frac{\log (2n)}{-\log \left(1-\frac{\mu}{n}\right)} = (1+o(1))\frac{n\log n}{\mu}.\]

Perhaps surprisingly, these elementary bounds are essentially sharp. As we shall show in the next section, the covering time $T$ for the exchangeable $k$-uniform coupon collector (Example~\ref{example: k-uniform exchangeable}) is sharply concentrated around the value $\frac{\log n}{-\log \left(1-\frac{k}{n}\right)}$; in particular if $k=o(n)$, $T_{\frac{1}{2}}=(1+o(1))\frac{n \log n}{k}$. We think of this as `slow coverage'. On the other hand, there are instances of `fast coverage', discussed in greater detail in Section~\ref{section: fast coverage}. We give here a simple example.

\begin{example}[Coupon collector with lottery]\label{example: lottery}
	Set $V=[n]$, 
	and $p=c/n$  for some $c=c(n)\in [0,n]$. Let $X$ be with probability $1-p$ 
	a singleton from $V$ chosen uniformly at random, and with probability $p$ the entire set $V$. 
\end{example}
\noindent Note that $X$ is an exchangeable random covering variable, with expected size
\[\mathbb{E}\vert X\vert = (1-p)+pn=1 +c -\frac{c}{n}.\] 
\begin{proposition}\label{prop: lottery covering}
	Let $X$ and $V$ be as in Example~\ref{example: lottery}. Assume $c=o(n)$ and $c$ is bounded away from $0$. Then:
	\begin{enumerate}[(i)]
		\item $T_{\frac{1}{2}} =(1+o(1))\frac{n\log 2}{c}$;
		\item $\mathbb{E}T=(1+o(1))\frac{n}{c}$;
		\item $\lim_{n\rightarrow \infty}\mathbb{P}\left(T> \frac{xn}{c}\right) =e^{-x}$ for any fixed $x\geq 0$. 
	\end{enumerate}
\end{proposition}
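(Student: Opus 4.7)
The strategy is to decouple the two mechanisms that produce coupons. Represent each $X_i$ as a pair $(B_i, Y_i)$ with $B_i \sim \mathrm{Bernoulli}(p)$ and $Y_i$ uniform on the singletons of $V$, all independent, by setting $X_i = V$ if $B_i = 1$ and $X_i = \{Y_i\}$ otherwise. Let $L = \inf\{i : B_i = 1\}$ be the waiting time for the first lottery coupon, which is $\Geom(p)$-distributed, and let $T_s^\star = \inf\{t : \{Y_1, \dots, Y_t\} = V\}$ be the standard (singleton) coupon collector time on $V = [n]$. A single lottery coupon covers $V$, so $C_t \neq V$ if and only if $L > t$ and the first $t$ singletons $Y_1, \dots, Y_t$ already fail to cover $V$. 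Since $L$ and $(Y_i)_{i \geq 1}$ are independent, this yields the clean identity
\begin{equation*}
\Pr(T > t) = (1-p)^t \cdot \Pr(T_s^\star > t).
\end{equation*}

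With this identity the three parts reduce to routine calculations. For (iii), I set $t = xn/c$: the lottery factor $(1-c/n)^{xn/c}$ tends to $e^{-x}$ since $p = c/n \to 0$, while the singleton factor $\Pr(T_s^\star > xn/c)$ tends to $1$, because $c$ is bounded below so that $xn/c = O(n) = o(n \log n)$, sitting well inside the lower tail of $T_s^\star$ furnished by \Cref{theorem: erdos renyi} (with $m=1$). For (i), I apply (iii) at $x = \log 2 \pm \varepsilon$ to sandwich $T_{\frac{1}{2}}$ between $(\log 2 \pm \varepsilon)n/c$, using that $\Pr(C_t = V)$ is non-decreasing in $t$.

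For (ii), I would expand via the tail-sum formula
\begin{equation*}
\E T = \sum_{t \geq 0} \Pr(T > t) = \sum_{t \geq 0} (1-p)^t \Pr(T_s^\star > t).
\end{equation*}
The trivial bound $\Pr(T_s^\star > t) \leq 1$ gives $\E T \leq 1/p = n/c$. For a matching lower bound, fix $A > 0$, truncate the sum at $M = \lfloor An/c\rfloor$, and use the monotonicity estimate $\Pr(T_s^\star > t) \geq \Pr(T_s^\star > M) = 1 - o(1)$ (again since $M = o(n \log n)$) to deduce $\E T \geq (1+o(1))(1 - e^{-A}) n/c$. Letting $A \to \infty$ after $n \to \infty$ finishes the proof.

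The whole argument is quite short once the decoupling identity is written down; the only real input beyond elementary probability is the concentration of the classical coupon collector time around $n \log n$, which is precisely what \Cref{theorem: erdos renyi} supplies. There is no serious obstacle to overcome, and the essence of the result is that the hypothesis $c = o(n)$ makes the singleton phase far too slow (order $n \log n$) to complete coverage before the lottery arrives (order $n/c$), so the covering time is essentially the geometric waiting time for the first lottery.
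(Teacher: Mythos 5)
Your proof is correct, and while it arrives at the same three conclusions via the same essential input (the Erd\H{o}s--R\'enyi concentration of the singleton collector around $n\log n$), the route is genuinely different in a way worth noting. The paper introduces the lottery-winning time $T' = \inf\{i : X_i = V\}$, argues via \Cref{theorem: erdos renyi} that whp $T = T'$, and then transfers the easy tail and expectation computations from $T'$ to $T$ through this high-probability coupling. Your decoupling representation $X_i = (B_i, Y_i)$ instead yields the \emph{exact} identity $\Pr(T > t) = (1-p)^t \Pr(T_s^\star > t)$, from which all three parts follow by direct calculation with no conditioning on a whp event. The gain is most visible in part (ii): the paper's lower bound passes through $\sum_t t(\Pr(T'=t)-o(1))$, a step that as written needs some care (the $o(1)$ must be summable against $t$, which is true but unstated), whereas your truncation at $M = \lfloor An/c \rfloor$ followed by a double limit $n\to\infty$, then $A\to\infty$, is airtight. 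Your approach also makes it transparent why each part works: the lottery factor $(1-p)^t$ carries all the asymptotics, while the singleton factor is $1-o(1)$ throughout the relevant range $t = O(n)$. Both proofs ultimately rest on the same observation that $c$ bounded below makes $n/c = O(n) \ll n\log n$, placing the lottery phase well before the singleton phase could possibly finish.

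One small point of hygiene: in part (i) you should restrict to $0 < \varepsilon < \log 2$ so that $x = \log 2 - \varepsilon$ remains positive, but this is cosmetic and does not affect the argument.
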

\begin{proof}
By Theorem~\ref{theorem: erdos renyi}, whp the $1$-uniform exchangeable coupon collector does not cover $[n]$ in  time less than $\frac{1}{2}n \log n$. Thus for time $t<\frac{1}{2}n\log n$, whp $T\leq t$ if and only if we have `won the lottery' by time $T$, that is, if $X_i=[n]$ for some $i\leq t$. This event occurs with probability $1-(1-p)^t$.

To obtain part (i) of the proposition, we observe that if $t\geq T_{\frac{1}{2}}$ then
\[\frac{1}{2}+o(1) \leq 1-(1-p)^t,\]
yielding $t\geq (1+o(1))\frac{\log 2}{\log (1-p)}=(1+o(1))\frac{n\log 2}{c}$, and we show similarly that if $t\leq T_{\frac{1}{2}}$ then $t\leq (1+o(1))\frac{n \log 2}{c}$ to conclude.

For part (ii), let $T'$ be the time at which we first `win the lottery' by receiving all of $V$ as our coupon.  We have 
\[\mathbb{E}T'=\sum_t tp(1-p)^{(t-1)}= \frac{1}{p}=\frac{n}{c}.\]
Since $T\leq T'$, we have that $\mathbb{E}T\leq \mathbb{E}T'$. Now from our estimates for the probability of winning the lottery by time $t$ above, whp we have $T'=o(n\log n)$. Thus by Theorem~\ref{theorem: erdos renyi}, whp $T = T'$, and
\begin{align*}\mathbb{E}T&\geq \sum_t t\mathbb{P}(T'=t\vert T'=T) \mathbb{P}(T'=T)\geq \sum_t t \left(\mathbb{P}(T'=t)-o(1)\right) =(1+o(1))\mathbb{E}(T'),
\end{align*}
whence we are done.

Finally for part (iii), we simply note that whp $T=T'$, and that
\begin{align*} 
\mathbb{P}(T'> \frac{xn}{c})&=(1-p)^{\frac{xn}{c}}=e^{-x(1+O(n^{-1}))}\rightarrow e^{-x}.
\end{align*}
\end{proof}

Proposition~\ref{prop: lottery covering} shows two things. First of all, the lower bound on $T_{\frac{1}{2}}$ in Proposition~\ref{proposition: elementary bounds} is essentially sharp; indeed taking $c=c(n)$ tending 
	to infinity slowly, we have in Example~\ref{example: lottery} that $\mu=\mathbb{E}\vert X\vert= c(1+o(1))$, and $T_{\frac{1}{2}}=(1+o(1))\frac{n\log 2}{\mu}$. 
	Further, by varying the value of $c=c(n)$ from $\Omega(\frac{1}{\log n})$ to $o(n)$, we can get $T_{\frac{1}{2}}$ to take asympotically any value between the bounds 
	from Proposition~\ref{proposition: elementary bounds}.

Secondly, we cannot in general expect $T$ to be sharply concentrated: part (iii) of Proposition~\ref{prop: lottery covering}  shows that we do not get sharper 
	concentration than the  Bollob\'as--Thomason-type concentration guaranteed by Proposition~\ref{cor: Bollobas-Thomason concentration}.  With this in mind, we next focus on conditions on $X$ which guarantees sharp concentration of the covering time $T$ and/or `slow coverage'.

\section{General concentration results}\label{section: slow coverage}
Let $V=[n]$ and $X$ be a random coupon variable for $V$. In this section we prove general results establishing (simple, easily checkable) sufficient conditions for sharp concentration of the covering time $T(\mathbf{X})$. We also include some results in the special case where the random coupon variable $X$ is balanced, transitive or exchangeable. Before stating our results, we need to introduce some notation.

Our proof strategy involves approximating the discrete-time process of collecting coupons by a continuous-time process. Instead of the coupon collector drawing a random coupon  $X$ at integer time points, she draws a random coupon $X$ (from the same distribution) at times given by a Poisson process with parameter $1$. 

 The times at which any given coupon is drawn will then be a thinned Poisson process, and the Poisson processes associated with different coupons will be independent. The times at which any particular element $x\in V$ is drawn will also be a thinned Poisson process, though the Poisson processes associated with different elements $x,y \in V$ will not in general be independent. Working in the continuous rather than in the discrete setting will greatly simplify calculations.

For $S\subseteq [n]$, set $h(S) = \Pr(X=S)$. For every $S$ with $h(S)>0$, start a Poisson process $\mathcal{P}_S$ with intensity $h(S)$. Each time an event occurs in $\mathcal{P}_S$, the coupon collector draws the coupon  $S$.
Let ${q_x := \sum_{S\ni x} h(S)}$ be the total intensity of all coupons covering $x$, and let ${q_{xy}:=\sum_{S\ni x,y} h(S)}$ be the total intensity of all coupons covering $x$ and $y$ simultaneously. Equivalently, $q_x = \Pr(x\in X)$ and $q_{xy} = \Pr(x,y\in X)$. Note that $\sum_{x\in V} q_x =\sum_{S}|S|\cdot h(S)= \E X=:\mu$.

Let $Z_{x,t}$ be the indicator event of the element $x$ not being covered at time $t$, and let $Z_t := \sum_{x\in V} Z_{x,t}$. An element $x$ has not been covered by time $t$ if its associated Poisson process with intensity $q_x$ has had no events in the time interval $[0,t]$. The probability of this occurring is $e^{-q_x t}$, and so the first two moments of $Z_t$ are:
\[
\E Z_t = \sum_{x\in V} e^{-q_x t}
\quad\textrm{ and }\quad
\E Z_t^2 = \sum_{x,y \in V} e^{-(q_x+q_y-q_{xy}) t}.
\]
Many of our proofs will use the second moment method to show concentration of $Z_t$, which implies concentration of $T(\bold X)$.
\subsection{Results}\label{subsection: results}
Let $V=V(n)$ be an $n$-set and $X=X(n)$ a covering random variables for $V$ (formally we consider sequences $(V(n))_{n\in \mathbb{N}}$ and $(X(n))_{n\in \mathbb{N}}$). We let $T=T(\mathbf{X})$ denote the covering time for the $X$-coupon collector.

Our first result gives us sufficient conditions for $\mathbb{E} Z_t$ (the number of uncollected elements) to have a sharp transition from $\omega(1)$ to $o(1)$. This holds trivially for balanced coupon collectors, but may fail if $X$ is far from balanced --- for instance, if some elements of $V$ occur very rarely. 

For any $\alpha\in \R$, let $\|\mathbf{q}\|_\alpha$ be the $\alpha$-H\"older mean of the vector of intensities $\mathbf{q}=(q_x)$,  i.e. $\|\mathbf{q}\|_\alpha := \left(\frac{1}{n}\sum_x q_x^\alpha\right)^{\frac{1}{\alpha}}$ (with the usual convention that for $\alpha=0$, $\|\mathbf{q}\|_0$ is the geometric mean $(\prod_x q_x)^{1/n}$). 
\begin{theorem}\label{main thm: sharp first moment} Set $q_\star := \min_x q_x$. If any of the following conditions is satisfied, then there exists $T^-(n)$ and $T^+(n)$ such that $T^- = (1+o(1))T^+$, $\E Z_{T^-} \to \infty$ and $\,\E Z_{T^+} \to 0$. 
	\begin{enumerate}[(i)]
		\item There exists $t \gg q_\star^{-1}$ such that $\E Z_{t} \gg 1$
		\item There exists $\alpha=o(\log n)$ such that $(\alpha+2) \|\mathbf{q}\|_{-\alpha} \leq \log n  \cdot q_\star  $
		\item There exists  $A_r = \exp(\omega(r))$, which does not depend on $n$, such that for any $r>0$ and all sufficiently large $n$, the number of $y\in V$
		satisfying $q_y < r q_\star$ is at least $A_r$.
	\end{enumerate}
\end{theorem}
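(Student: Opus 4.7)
The plan is to base everything on the convexity of $\phi(t) := \log \E Z_t$. Since $\phi''(t) = \Var_{\pi_t}(q_X) \geq 0$ with $\pi_t(x) \propto e^{-q_x t}$ the Boltzmann distribution at time $t$, the function $\phi$ is convex and decreasing, with $\phi'(t) = -\E_{\pi_t}[q_X] \leq -q_\star$. I would first show that (i) implies the conclusion, then reduce (ii) and (iii) each to (i).

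For (i) implies the conclusion: let $T^*$ be the time at which $\E Z_{T^*} = 1$. Since $\E Z$ is monotone decreasing and the hypothesis supplies some $t^* \gg q_\star^{-1}$ with $\E Z_{t^*} \gg 1$, we have $T^* \geq t^*$ and hence $T^* q_\star \to \infty$. Take $s := \sqrt{T^*/q_\star}$, so that $s q_\star \to \infty$ while $s = o(T^*)$, and set $T^\pm := T^* \pm s$. The trivial bound $e^{-q_x(T^*+s)} \leq e^{-q_\star s}\,e^{-q_x T^*}$ summed over $x$ gives $\E Z_{T^+} \leq e^{-q_\star s} \to 0$. For the lower bound, convexity yields $\phi(T^-) \geq \phi(T^*) + \phi'(T^*)(T^- - T^*) = |\phi'(T^*)|\,s \geq q_\star s \to \infty$. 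Finally $T^+/T^- = 1 + 2s/(T^* - s) \to 1$.

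For (iii) implies (i): use a diagonalization. Since the hypothesis holds for each fixed $r$ and $n \geq n_0(r)$, I would choose $r = r(n) \to \infty$ slowly enough that $|\{y : q_y < r q_\star\}| \geq A_r$ still holds, and then take $t^* := \log A_r/(2 r q_\star)$. The superexponential growth $\log A_r/r \to \infty$ gives $t^* q_\star = \log A_r/(2r) \to \infty$, and restricting the sum $\E Z_{t^*}$ to those $y$ yields $\E Z_{t^*} \geq A_r \cdot e^{-r q_\star t^*} = A_r^{1/2} \to \infty$, placing us in (i).

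For (ii) implies (i) — the hard part: write $\nu := \|\mathbf{q}\|_{-\alpha}$ and $\sigma := \nu/q_\star$. A reverse Markov argument applied to the values $q_x^{-\alpha}$ (each at most $q_\star^{-\alpha}$, with average $\nu^{-\alpha}$) gives $N(2\nu) := |\{x : q_x \leq 2\nu\}| \geq (n/2)\sigma^{-\alpha}$, and choosing $t^* := \alpha/(2\nu)$ produces $\E Z_{t^*} \geq N(2\nu) e^{-\alpha} \geq (n/2)\sigma^{-\alpha} e^{-\alpha}$. Setting $m := \log n/(\alpha+2)$, the hypothesis $\sigma \leq m$ together with $m \to \infty$ (since $\alpha = o(\log n)$) gives $\alpha(\log \sigma + 1) \leq (\log n)\log(em)/m = o(\log n)$, whence $\log \E Z_{t^*} \geq \log n - o(\log n) \to \infty$. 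The delicate step is verifying $t^* q_\star = \alpha/(2\sigma) \to \infty$: the direct bound $\alpha/\sigma \geq \alpha(\alpha+2)/\log n$ only suffices when $\alpha^2 \gg \log n$, so for smaller admissible $\alpha$ one must either refine the choice of $t^*$ or pass to a larger $\alpha$ that still satisfies the hypothesis by exploiting the monotonicity of $\|\mathbf{q}\|_{-\alpha}$ in $\alpha$. I expect this balancing of the two constraints to be the main technical obstacle.
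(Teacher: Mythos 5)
Your arguments for (i) and (iii) are correct and essentially follow the paper's route. For (i) the paper takes an arbitrary $\Delta$ with $q_\star^{-1}\ll\Delta\ll T^*$ and sets $T^\pm=T^*\pm\Delta$; your $s=\sqrt{T^*/q_\star}$ is one admissible choice, and your direct bound $\E Z_{T^+}\le e^{-q_\star s}\,\E Z_{T^*}$ is in fact cleaner than the paper's, which invokes its displayed convexity inequality $f(t)-f(t+\Delta)\ge -\Delta f'(t)$ whose sign is wrong as printed (convexity gives $f(t)-f(t+\Delta)\ge -\Delta f'(t+\Delta)$, and the correct fix is precisely the bound $-f'\ge q_\star$ you use). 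For (iii), your diagonalisation over $r$ and the choice $t^*=\log A_r/(2rq_\star)$ is the paper's argument with $t^*$ pinned to a concrete point in the window $q_\star^{-1}\ll t^*\ll\log A_r/(rq_\star)$.

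The gap you flag in (ii) is real, but your route makes it worse than it needs to be: your $t^*=\alpha/(2\|\mathbf{q}\|_{-\alpha})$ is too small. The paper instead takes $t^*=(\log n-c)/\|\mathbf{q}\|_{-\alpha}$ for some $1\ll c\le\log n/(\alpha+2)$ (the printed ``$\log n-\alpha$'' is a typo for ``$\log n-c$''), and replaces your reverse-Markov count by Jensen's inequality applied to $z\mapsto e^{-z^{-1/\alpha}}$, which is convex where $z^{-1/\alpha}\ge\alpha+1$; the hypothesis forces $q_x t^*\ge(\alpha+2)\|\mathbf{q}\|_{-\alpha}t^*/\log n\ge\alpha+1$ for every $x$, so Jensen yields $\E Z_{t^*}\ge n\exp(-\|\mathbf{q}\|_{-\alpha}t^*)=e^c\to\infty$ together with $q_\star t^*\ge\alpha+1$. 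Thus the paper closes the argument whenever $\alpha\to\infty$, whereas your $t^*$ gives only $q_\star t^*\ge\alpha(\alpha+2)/(2\log n)$ and needs $\alpha^2\gg\log n$; adopting the paper's larger $t^*$ and Jensen repairs that. Be aware, though, that in the remaining regime $\alpha=O(1)$ the paper's ``by assumption, $t^*\gg 1/q_\star$'' is itself unsupported, and the stated hypothesis does not suffice there: taking $q_1=q_\star$ and $q_x=(\log n/2)\,q_\star$ for $x\ge 2$ gives $2\|\mathbf{q}\|_0=(\log n)\,q_\star\,(\log n/2)^{-1/n}<(\log n)\,q_\star$, so (ii) holds with $\alpha=0$, yet $\E Z_t=e^{-q_\star t}+(n-1)e^{-(\log n/2)q_\star t}$ admits no $T^\pm$ as required, since $\E Z_{T^-}\to\infty$ forces $q_\star T^-<2$ while $\E Z_{T^+}\to 0$ forces $q_\star T^+\to\infty$. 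So condition (ii) should really be read as $1\ll\alpha=o(\log n)$; with that reading, the paper's $t^*$ and Jensen argument is the way through.
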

If $\frac{q_x}{q_y} \leq \frac{1}{2}\log n$ for all $x,y$, condition (ii) is met trivially with $\alpha=0$; in fact it can be shown that the factor $\frac{1}{2}$ can be replaced by any positive number. So in particular Theorem~\ref{main thm: sharp first moment} applies to `almost balanced' random coupon variables $X$.

Our second result gives whp bounds on $T$ when correlations are bounded.
\begin{theorem}	\label{main thm: bounded correlation}
	If there exist $C=C(n)$, $T^-=T^-(n)$ and $T^+=T^+(n)$ such that all of the following are satisfied:

	\begin{enumerate}[(i)]
		\item  $\E Z_{T^-}\to \infty$ and $\E Z_{T^+}\to 0$,
		\item the coupons have $C$-bounded correlation, i.e. $q_{xy} \leq C q_x q_y$ for all $x \neq y$,
		\item $C \avq  = o(\frac{1}{\log n})$, where ${\avq = \E\big[q_\theta \big| \theta \textrm{ not covered at time } T^-\big]}$ is the expected size of $q_{\theta}$ for $\theta$ drawn uniformly at random from among the uncovered vertices at time $T^-$,
	\end{enumerate}
	then $T^- \leq T(\mathbf{X}) \leq  T^+$ whp.
\end{theorem}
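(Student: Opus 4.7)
The plan is to prove the upper and lower bounds separately in the continuous-time Poissonised model, transferring the conclusion to the discrete setting via Poisson concentration at the very end. The upper bound $T \leq T^+$ whp is immediate from Markov's inequality: by hypothesis (i), $\E Z_{T^+} \to 0$, so $\Pr(T > T^+) = \Pr(Z_{T^+} \geq 1) \leq \E Z_{T^+} = o(1)$.

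For the lower bound $T \geq T^-$ whp, I plan to apply the second-moment method via $\Pr(Z_{T^-}=0) \leq \Var(Z_{T^-})/(\E Z_{T^-})^2$. The diagonal terms of $\Var Z_{T^-}$ contribute at most $\E Z_{T^-}$, which is $o((\E Z_{T^-})^2)$ by (i). Using hypothesis (ii), the off-diagonal part is bounded above by
\[
\sum_{x \neq y} e^{-(q_x+q_y)T^-}(e^{Cq_xq_yT^-} - 1) = (\E Z_{T^-})^2 \cdot \E_{\theta_1, \theta_2}\bigl[e^{Cq_{\theta_1}q_{\theta_2}T^-} - 1\bigr],
\]
where $\theta_1, \theta_2$ are independent samples from the distribution $\pi_x := e^{-q_x T^-}/\E Z_{T^-}$ of a ``likely uncovered'' vertex at time $T^-$. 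So the lower bound reduces to showing this expectation is $o(1)$.

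The crucial intermediate estimate is $\avq \cdot T^- \leq \log n$. I plan to derive this from the identity $\frac{d}{dt}\log \E Z_t = -\avq(t)$ (where $\avq(t)$ denotes the analogue of $\avq$ at time $t$) together with the monotonicity $\avq'(t) = -\Var_{\pi_t}(q) \leq 0$, which give
\[
\avq(T^-) \cdot T^- \leq \int_0^{T^-}\avq(s)\,ds = \log(n/\E Z_{T^-}) \leq \log n.
\]
Combined with hypothesis (iii), this yields $CT^-\avq^2 = (C\avq)(\avq T^-) \leq o(1/\log n) \cdot \log n = o(1)$, which handles the contribution from pairs $(x,y)$ with $Cq_xq_yT^- \leq 1$ via the elementary bound $e^W - 1 \leq 2W$ for $W \in [0,1]$.

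The main technical obstacle, as I see it, will be controlling the ``atypical'' pairs with $Cq_xq_yT^- > 1$, on which the Taylor-expansion bound fails. My plan is to split the double sum at the threshold $Cq_xq_yT^- = 1$ and, on the atypical side, use the trivial bound $q_{xy} \leq \min(q_x, q_y)$ to obtain a per-pair contribution of at most $e^{-\max(q_x,q_y)T^-}$, then argue via tail estimates under $\pi$ (such as $\Pr_\pi(q_\theta > s) \leq ne^{-sT^-}/\E Z_{T^-}$) that the total atypical contribution is a negligible fraction of $(\E Z_{T^-})^2$. Finally, the continuous-to-discrete conversion is routine: by Poisson concentration the number of coupons drawn by continuous time $t$ is $t(1+o(1))$ whp for $t \to \infty$, so the two covering times agree to within a $(1+o(1))$ multiplicative factor whp provided $T^{\pm}\to\infty$ (the alternative case being trivial).
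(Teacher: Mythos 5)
Your upper bound ($T \leq T^+$ whp via Markov) and the key convexity estimate $\avq\,T^- \leq \log n$ (which you derive from monotonicity of $\avq(t) = -\tfrac{d}{dt}\log\E Z_t$, equivalent to the paper's convexity argument for $f(t)=\log\E Z_t$) are both correct and match the paper. The typical-pairs bound $CT^-\avq^2 = (C\avq)(\avq T^-) = o(1)$ is likewise fine.

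However, there is a genuine gap at the ``atypical'' pairs, and it is not a matter of missing technical details: the direct second moment on $Z_{T^-}$ that you are aiming for can actually \emph{fail} under hypotheses (i)--(iii). Consider $V = A \sqcup B$ where $|B| = n/(\log n)^3$, $|A| = n - |B|$, and (in the continuous model) a single rate-$a$ coupon covers all of $A$ while each $y\in B$ is covered by its own rate-$b$ singleton, with $aT^- = 5\log\log n$ and $bT^- = (\log\log n)/(\log n)^2$. One checks $\E Z_{T^-} \approx |B| \to\infty$, $\avq \approx a(\log n)^{-2}$, and with $C = 1/a$ both (ii) (since $q_{xy}=a=Cq_x q_y$ on $A\times A$) and (iii) hold. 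Yet the $A\times A$ contribution to $\Var Z_{T^-}$ is $\approx |A|^2 e^{-aT^-} = n^2(\log n)^{-5}$ while $(\E Z_{T^-})^2 \approx n^2(\log n)^{-6}$, so $\Var Z_{T^-}/(\E Z_{T^-})^2 \gtrsim \log n \to \infty$. Chebyshev on $Z_{T^-}$ therefore gives nothing, and your proposed tail estimate $\Pr_\pi(q_\theta > s) \leq n e^{-sT^-}/\E Z_{T^-}$ cannot close this (here $e^{-s_0 T^-}$ is not small enough to beat the factor $n$).

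The paper sidesteps this entirely by running the second-moment argument not on $Z_t$ but on $Y_t$, the count of uncovered vertices restricted to the set $U = \{x : q_x \leq 2\avq\}$ of ``rare'' vertices. For $x,y \in U$ one automatically has $q_{xy}T^- \leq Cq_xq_yT^- \leq 4C\avq^2 T^- = o(1)$ — every pair is typical, so there is no atypical case to handle — and the price is a short Markov-type argument showing $\E Y_{T^-} \geq \tfrac12 \E Z_{T^-}\to\infty$. In the example above, $A$ is excluded from $U$ (since $a \gg 2\avq$), which is exactly why the restricted second moment succeeds where the global one fails. You should adopt this restriction, or an equivalent truncation of the index set, to make the lower bound work.
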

The parameter $\avq$ should be viewed as the `speed' of covering at time $T^-$, 
and it is usually hard to compute exactly. However in order to apply Theorem~\ref{main thm: bounded correlation} it is enough to give an upper bound on $\avq$. The simplest such bound, namely $\avq \leq \max_x q_x$, can easily be improved. For instance, it is straight-forward to show that $\avq \leq \|\mathbf{q}\|_{-\alpha}$ for any finite $\alpha$ and all $n$ sufficiently large. This makes condition (iii) easy to check in many situations.

We can obtain further results when $X$ is assumed to be balanced. The next theorem tells us that if either the coupons are `small' (size $o(n)$) or the pairwise correlations between the elements of $V$ are `not too strong' then we have sharp concentration for $T$.
\begin{theorem}\label{main thm: balanced coupons}
Let $X$ be a balanced random coupon variable with $\mu := \E |X|$.
	\begin{enumerate}[(i)]
		\item If there exists $t$ such that $\sum_{x,y}( e^{q_{xy} t} -1 ) = o(n^2)$ and $\sum_{x}e^{-q_{x} t} =\omega(1) $, then $T(\mathbf X)\geq t$ whp.
		\item If there exists $1< \beta(n) < n$  tending to infinity and $q=o\left(\frac{\mu}{n\log \beta}\right)$ with $q_{xy}\leq q$ for all but at most $\frac{1}{\beta} n^2$ `bad' pairs $(x,y)$,
		then $T(\mathbf X)\geq \frac{n }{\mu}(\log\beta-\omega(1))$ whp.
		
		In particular, if $q = o\left(\frac{\mu}{n\log n}\right)$ and there are at most $n^{1+o(1)}$ such `bad' pairs, then \\ ${T(\mathbf X) = (1\pm o(1))\frac{n \log n}{\mu}}$ whp.
		\item If all coupons have size at most $M$, then ${T(\mathbf X)\geq \frac{n}{\mu}(\log n - \log M - \omega(1))}$ whp, for $\omega(1)$ tending to infinity arbitrarily slowly.
		
		In particular, if $M = n^{o(1)}$, then ${T(\mathbf X) = (1+ o(1))\frac{n \log n}{\mu}}$ whp.
		\item If $q_{xy} = q_{x'y'}$ for all $x\neq y,x'\neq y'$, and all coupons have size at most $M$, and $T^-$ is such that ${T^- = \frac{n}{\mu} \cdot \min\big(o(\frac{n}{M}),\log n - \omega(1)\big)}$, then $T^- \leq T(\mathbf X)$ whp.
		
		In particular, if $M = o(\frac{n}{\log n})$, then ${T(\mathbf X) = (1+ o(1))\frac{n \log n}{\mu}}$ whp.
	\end{enumerate}
\end{theorem}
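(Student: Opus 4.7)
The plan is to run the second moment method on the uncovered-count $Z_t = \sum_x Z_{x,t}$ in the Poissonified process set up immediately before the theorem. From the formulas $\E Z_t = \sum_x e^{-q_x t}$ and $\E Z_t^2 = \sum_{x,y} e^{-(q_x+q_y-q_{xy})t}$ one extracts
\[
\Var(Z_t) \leq \E Z_t + \sum_{x \neq y} e^{-(q_x+q_y)t}\bigl(e^{q_{xy}t} - 1\bigr),
\]
and Chebyshev gives $\Pr(Z_t = 0) \leq \Var(Z_t)/(\E Z_t)^2$. It therefore suffices to exhibit a $t$ for which $\E Z_t \to \infty$ and the off-diagonal sum above is $o\bigl((\E Z_t)^2\bigr)$; then $T(\mathbf{X}) \geq t$ whp, with the passage from continuous to discrete time handled by the standard Poisson-clock concentration argument. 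The matching upper bounds stated in the ``in particular'' clauses of (ii)--(iv) follow immediately from Proposition~\ref{proposition: elementary bounds} with $q_\star = \mu/n$.

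Part (i) is then essentially a restatement: balancedness gives $(\E Z_t)^2 = n^2 e^{-2\mu t/n}$, so the variance-to-mean-squared ratio equals $1/\E Z_t + n^{-2}\sum_{x\neq y}(e^{q_{xy}t}-1)$, and both pieces are $o(1)$ by hypothesis. For part (ii), I would take $t := (n/\mu)(\log\beta - \omega(1))$, giving $\E Z_t = (n/\beta)e^{\omega(1)} \to \infty$ (since $\beta < n$), and then split the off-diagonal sum according to whether $q_{xy} \leq q$ (good pairs) or not (at most $n^2/\beta$ bad pairs). Good pairs contribute $e^{q_{xy}t} - 1 \leq 2qt$ each (using $qt = o(1)$, immediate from $q = o(\mu/(n\log\beta))$), for total $o(n^2)$. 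For bad pairs the trivial bound $q_{xy} \leq q_x = \mu/n$ gives $e^{q_{xy}t} - 1 \leq e^{\mu t/n} = \beta e^{-\omega(1)}$, summing to $n^2 e^{-\omega(1)} = o(n^2)$. Part (i) then applies.

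For parts (iii) and (iv) the key identity is
\[
\sum_{x \neq y} q_{xy} = \sum_S h(S)\,|S|(|S|-1) = \E\bigl(|X|(|X|-1)\bigr) \leq (M-1)\mu.
\]
In part (iii), I would set $t := (n/\mu)(\log n - \log M - \omega(1))$, so that $\E Z_t = M e^{\omega(1)} \to \infty$ and $e^{\mu t/n} = (n/M)e^{-\omega(1)}$. Convexity of $y \mapsto e^y - 1$ on $[0,\infty)$ combined with $q_{xy} \leq q_x = \mu/n$ yields $e^{q_{xy}t} - 1 \leq (nq_{xy}/\mu)(e^{\mu t/n} - 1)$, and summing together with the identity gives
\[
\sum_{x\neq y}\bigl(e^{q_{xy}t} - 1\bigr) \leq \tfrac{n}{\mu}(e^{\mu t/n} - 1)(M-1)\mu \leq nM \cdot \tfrac{n}{M}e^{-\omega(1)} = n^2 e^{-\omega(1)},
\]
so part (i) finishes. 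In part (iv), the constancy of $q_{xy}$ off the diagonal upgrades the identity to $q \leq 2M\mu/n^2$. The $\min$-formulation of $T^-$ is then exactly what is required: from the first entry, $\mu T^-/n \leq o(n/M)$ gives $qT^- \leq (2M/n)(\mu T^-/n) = o(1)$, hence $\sum_{x\neq y}(e^{qT^-} - 1) \leq 2n(n-1)\,qT^- = o(n^2)$; from the second entry, $\mu T^-/n \leq \log n - \omega(1)$ gives $\E Z_{T^-} \geq e^{\omega(1)} \to \infty$. Part (i) once more concludes.

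The real content is the variance bound, the identity $\sum_{x\neq y} q_{xy} = \E(|X|(|X|-1))$, and the convexity trick; everything else is bookkeeping of $\omega(1)$ slack terms engineered so that $\E Z_t$ diverges while the off-diagonal sum still lands in $o(n^2)$. This balancing is most delicate in part (iv), where neither branch of the $\min$ alone can deliver both conclusions simultaneously, so the $\min$-formulation is genuinely necessary rather than merely convenient; that is where I would expect the main subtlety to arise.
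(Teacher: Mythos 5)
Your proof is correct and follows the paper's own route: parts (ii)--(iv) are reduced to part (i) with the same choices of $t$, and part (i) is a direct application of Lemma~\ref{basiclemma}, part~2. The only cosmetic difference is in part (iii), where you use the chord bound $e^y-1 \leq (y/L)(e^L-1)$ on $[0,L]$ together with the global identity $\sum_{x\neq y} q_{xy} = \E\bigl[|X|(|X|-1)\bigr] \leq (M-1)\mu$, whereas the paper obtains the identical estimate $\sum_{y}(e^{q_{xy}t}-1) \leq M e^{\mu t/n}$ by applying Karamata's majorization inequality row by row via $\sum_y q_{xy} \leq M q_x$.
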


There are examples where the whp lower bounds given by this theorem are sharp, while the upper bounds given by the first moment method are not; see for instance Example~\ref{example: collecting a smaller set} in Section~\ref{section: fast coverage}. Note also that unlike Theorem~\ref{theorem: aldous}, Theorem~\ref{main thm: balanced coupons} also locates the threshold.

For balanced  random covering variables  we also have good control for both concentration and the covering time when $X$ satisfies an `almost negative correlation' condition. Here below we say that a function $m=m(n)$ is \emph{sub-polynomial} in $n$ if $m = n^{o(1)}$.
\begin{theorem}\label{main thm:  balanced k-uniform}
	Let $\delta>0$ be fixed. Let $X$ be a balanced covering random variable for an $n$-set $V$, with $\Pr(x \in X) = c$ for some 
	$c\in(0,1-\delta)$. Suppose  further that we have \emph{almost negative correlations}, namely that there exist $\eta=o(1/\log n)$ 
	and $b=b(n)$ subpolynomial in $n$ such that for any 
	$x\in V$ \[\Pr(x,y \notin X) \leq (1-c)^2 (1+\eta).\] 
	holds for all but at most $b$ elements $y$. Then 
	whp $T(\mathbf{X})=(1+o(1))\frac{\log n}{-\log(1-c)}$.
\end{theorem}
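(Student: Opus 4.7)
The plan is to prove $T = (1+o(1))t^\star$ whp, where $t^\star := \log n / (-\log(1-c)) = \Theta(\log n)$, by the second-moment method applied directly in the discrete setting with $Z_t := \sum_{x \in V} \mathbf{1}[x \notin C_t]$. Because $c$ is a constant bounded away from $0$, the Poissonization of Section~\ref{section: slow coverage} is not ideally suited here: the Poissonized first moment $n e^{-ct}$ only reaches $1$ at continuous time $\log n / c$, which is strictly larger than $t^\star$ (since $-\log(1-c) > c$ for $c>0$). In the discrete setting, by contrast, independence of the $X_i$'s gives the exact expressions $\Pr(x \notin C_t) = (1-c)^t$ and $\Pr(x,y \notin C_t) = \Pr(x,y \notin X)^t$ that we need.

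Pick $\varepsilon_n \to 0$ (to be specified below) and set $t^\pm := \lfloor t^\star (1 \pm \varepsilon_n)\rfloor$. The upper bound is immediate from Markov's inequality: $\Pr(T > t^+) \leq \E Z_{t^+} = n(1-c)^{t^+} = n^{-\varepsilon_n(1+o(1))} = o(1)$. For the lower bound, $\E Z_{t^-} = n^{\varepsilon_n(1+o(1))}$, and the main work is controlling $\Var(Z_{t^-})$. Split ordered pairs $(x, y)$ with $x \neq y$ into \emph{good} pairs (those satisfying $\Pr(x,y \notin X) \leq (1-c)^2(1+\eta)$) and \emph{bad} pairs (at most $nb$ in total, since each $x$ has at most $b$ bad partners). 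For good pairs,
\[
\mathrm{Cov}(Z_{x, t^-}, Z_{y, t^-}) \leq (1-c)^{2t^-}\bigl[(1+\eta)^{t^-} - 1\bigr] = (1-c)^{2t^-} \cdot o(1),
\]
using $\eta t^- \leq \eta t^\star = o(1)$ from the hypothesis $\eta = o(1/\log n)$. Summed over good pairs this contributes $o\bigl(n^2 (1-c)^{2t^-}\bigr) = o\bigl((\E Z_{t^-})^2\bigr)$. For bad pairs, the trivial bound $\mathrm{Cov}(Z_x, Z_y) \leq \Pr(x \notin C_{t^-}) = (1-c)^{t^-}$ yields a total contribution of at most $b\,\E Z_{t^-}$, and the diagonal variance is $\leq \E Z_{t^-}$; so $\Var(Z_{t^-}) \leq (1+b)\,\E Z_{t^-} + o\bigl((\E Z_{t^-})^2\bigr)$.

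The only real obstacle is choosing $\varepsilon_n$ to balance three competing requirements: $\varepsilon_n \to 0$ (for the conclusion), $\varepsilon_n \log n \to \infty$ (so $\E Z_{t^-} \to \infty$), and $\varepsilon_n \log n \gg \log b$ (so $b = o(\E Z_{t^-})$). Since $b$ is sub-polynomial in $n$, $\log b = o(\log n)$ and a suitable $\varepsilon_n$ exists (e.g.\ $\varepsilon_n := (\log b + \log\log n)/\log n$). With this choice $\Var(Z_{t^-}) = o\bigl((\E Z_{t^-})^2\bigr)$, so Chebyshev's inequality gives $\Pr(Z_{t^-} = 0) \to 0$, i.e.\ $T > t^-$ whp; combining with the upper bound then yields $T = (1+o(1))t^\star$ whp.
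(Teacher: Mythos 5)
Your proof is correct and takes essentially the same approach as the paper's: a discrete-setting second-moment computation that splits ordered pairs into good ones (controlled by the almost-negative-correlation hypothesis, contributing a $(1+\eta)^t = 1+o(1)$ factor) and at most $nb$ bad ones (bounded trivially and swamped by $\E Z_{t^-}$ since $b$ is subpolynomial). The only cosmetic difference is that the paper works with a fixed $\varepsilon>0$ and lets $\varepsilon\to 0$ afterwards rather than choosing a varying $\varepsilon_n$, but this is immaterial.
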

Note that if $\eta=0$ then the correlation condition is the same as the commonly used pairwise negative correlation condition. Recently a substantial theory for negatively correlated random variables has been developed and numerous common examples have been shown to have this and even stronger correlation properties, see~\cite{BBL}.
\begin{cor}\label{main thm: exchangeable uniform} 
	Suppose that $X$ is balanced, has pairwise negative correlation, and $\Pr(x \in X) = c \leq 1- \delta$ for a fixed $\delta>0$. Then whp $T(\mathbf{X})=(1+o(1))\frac{\log n}{-\log(1-c)}$. \qed	
\end{cor}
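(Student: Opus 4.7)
The plan is to deduce the corollary directly from Theorem~\ref{main thm:  balanced k-uniform} by a short translation of hypotheses: the only work needed is to convert ``pairwise negative correlation'' of $X$, which is naturally phrased in terms of the events $\{x\in X\}$, into the ``almost negative correlation'' hypothesis used in Theorem~\ref{main thm:  balanced k-uniform}, which is stated in terms of the complementary events $\{x\notin X\}$.

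First, I would recall that pairwise negative correlation means that for every $x\neq y\in V$ we have $\Pr(x\in X, y\in X) \leq \Pr(x\in X)\Pr(y\in X) = c^2$. Then I would apply inclusion--exclusion: since
\[
\Pr(x\notin X, y\notin X) = 1 - \Pr(x\in X) - \Pr(y\in X) + \Pr(x\in X, y\in X),
\]
the negative correlation bound gives $\Pr(x\notin X, y\notin X) \leq 1 - 2c + c^2 = (1-c)^2$ for \emph{every} pair $x\neq y$. Hence the almost-negative-correlation hypothesis of Theorem~\ref{main thm:  balanced k-uniform} is met with slack parameter $\eta = 0$ (trivially $o(1/\log n)$) and with $b = 0$ exceptional elements (trivially sub-polynomial in $n$).

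The remaining hypotheses of Theorem~\ref{main thm:  balanced k-uniform}---that $X$ is balanced with $\Pr(x\in X) = c$ and that $c$ is bounded away from $1$---are assumed outright in the corollary (up to replacing $\delta$ by, say, $\delta/2$ to convert the weak inequality $c\leq 1-\delta$ into the strict $c<1-\delta/2$). Applying Theorem~\ref{main thm:  balanced k-uniform} then yields ${T(\mathbf{X}) = (1+o(1))\frac{\log n}{-\log(1-c)}}$ whp, completing the plan. There is no serious obstacle here; the only point to watch is the direction of the inclusion--exclusion inequality, which works out in the favourable direction precisely because the correlation is \emph{negative} rather than positive.
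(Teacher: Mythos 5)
Your proposal is correct and is precisely the short deduction the paper intends (it labels the statement a corollary and ends it with a \qed, omitting the argument). The key observation you make — that $\Pr(x\in X,y\in X)\le c^2$ propagates via inclusion--exclusion to $\Pr(x\notin X,y\notin X)\le(1-c)^2$, so the almost-negative-correlation hypothesis of Theorem~\ref{main thm:  balanced k-uniform} holds with $\eta=0$ and $b=0$ — is exactly the step being left to the reader.
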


If $c=o(1)$ then the equality above may be rewritten as $T(\mathbf{X})=(1+o(1))\frac{n\log n}{\mathbb{E}\vert X\vert}$.

We next give conditions implying sharp concentration for the covering time of an  
exchangeable random variable $X$ 
 around the same value as a \emph{uniform} exchangeable random variable with the same mean coupon size.
\begin{theorem}\label{main thm: exchangeable coupons}
	Let $X$ be an exchangeable random coupon variable, for $V=[n]$, with maximum coupon size $M$, average coupon size $\mu$ and mean square coupon size $\chi$. If any of the four conditions below holds, then whp $T(\mathbf{X})=(1+o(1)) \frac{\log n}{-\log (1-\frac{\mu}{n})}$ (which in the case $\mu=o(n)$ can be rewritten as $T=(1+o(1))\frac{n\log n}{\mu}$).
	\begin{enumerate}[(i)]
		\item $M=o(\sqrt{n\log n})$; 
		\item $M=o(n)$ and $M=o(\sqrt{\mu n\log n})$; 
		\item $M=o(n)$ and $\chi=o(\mu n \log n)$;  
		\item $\mu<(1-\delta)n$ for some $\delta>0$ and  $\chi =(1+o(\frac{1}{\mu n\log n}))\mu^2$.	
	\end{enumerate}
\end{theorem}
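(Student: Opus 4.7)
Since $X$ is exchangeable (in particular balanced), $q_\star = \mu/n$, and so \Cref{proposition: elementary bounds} immediately yields the upper bound $T \leq (1+o(1))\log n/(-\log(1-\mu/n))$ whp. The substantive content of the theorem is therefore the matching lower bound, which I prove via the second moment method.

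Fix $\varepsilon > 0$ and set $c := \mu/n$, $t^- := (1-\varepsilon)\log n/(-\log(1-c))$. I aim to show $\Pr(Z_{t^-} > 0) \to 1$, where $Z_t$ denotes the count of uncovered elements at time $t$. By exchangeability, $\Pr(x \in X) = c$ is constant across $x$, and $\Pr(\{x,y\} \subseteq X) = q := (\chi-\mu)/(n(n-1))$ is constant across pairs $\{x,y\}$; hence
\[
\E Z_{t^-} = n(1-c)^{t^-} = n^{\varepsilon} \to \infty, \qquad
\E Z_{t^-}^2 = \E Z_{t^-} + n(n-1)(1-2c+q)^{t^-}.
\]
The Paley--Zygmund inequality then reduces the desired conclusion to showing $\rho^{t^-} \to 1$ for $\rho := (1-2c+q)/(1-c)^2 = 1 + (q-c^2)/(1-c)^2$.

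If $q \leq c^2$ (pairwise negative correlation of $X$) then $\rho \leq 1$ and we are done. Using the identity
\[ q - c^2 = \frac{\chi-\mu^2}{n(n-1)} - \frac{c(1-c)}{n-1},\]
condition (iv) (which entails $\chi - \mu^2 = o(\mu/(n\log n))$ and $c < 1 - \delta$) yields $q < c^2$ for all large $n$, and the conclusion follows via \Cref{main thm: exchangeable uniform}. Under conditions (i)--(iii), pairwise negative correlation may fail, so I verify $t^-(q-c^2)/(1-c)^2 \to 0$ directly via the crude upper bounds $q \leq M\mu/(n(n-1))$ (from $|X| \leq M$, hence $\chi \leq M\mu$) or $q \leq \chi/(n(n-1))$. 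With $t^- = O(\log n/(-\log(1-c)))$ and (for $c$ bounded away from 1) $(1-c)^{-2} = O(1)$, these substitutions yield bounds of order $M\log n/n$ or $\chi\log n/(\mu n)$, and the smallness hypotheses in (i)--(iii) are tuned so that these quantities are $o(1)$.

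The main obstacle is a unified treatment of both regimes $c = o(1)$ and $c = \Theta(1)$: in the latter, $-\log(1-c)$ and $(1-c)^{-2}$ must be tracked exactly rather than approximated by $c$ and $1$, and the bounds on $q$ must be carried through with the correct constants. A secondary subtlety is converting the hypothesis on $\chi$ in (iv) into the pairwise negative correlation required by \Cref{main thm: exchangeable uniform}; the identity displayed above accomplishes this, once one observes that $c(1-c)/(n-1)$ dominates $(\chi-\mu^2)/(n(n-1))$ when $c$ is bounded away from 1 and $\chi - \mu^2 = o(\mu/(n\log n))$.
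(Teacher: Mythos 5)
Your proof takes a genuinely different route from the paper's: you attack the lower bound by a direct second-moment (Paley--Zygmund) argument on $Z_{t^-}$, whereas the paper couples the $X$-collector to a singleton collector and applies a generalised Chernoff bound to the sum $\sum_i |X_i|$. Unfortunately, the second-moment route is not strong enough to handle conditions (ii) and (iii); only (i) and (iv) go through.

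The problem is the final computation. Writing $q - c^2 \le \frac{\chi - \mu^2}{n(n-1)}$ and $t^- \asymp \frac{n\log n}{\mu}$ (valid since $\mu = o(n)$ under (i)--(iii)), the quantity you need to be $o(1)$ is
\[
t^-\,\frac{q - c^2}{(1-c)^2} \;\lesssim\; \frac{(\chi - \mu^2)\log n}{\mu n}.
\]
Under (iii), $\chi = o(\mu n \log n)$ only gives $\frac{(\chi-\mu^2)\log n}{\mu n} = o((\log n)^2)$, which is far from $o(1)$. Under (ii), using $\chi \le M\mu$ gives $\frac{M\log n}{n} = o\bigl(\sqrt{\mu (\log n)^3 / n}\bigr)$, which also fails to be $o(1)$ once $\mu$ is permitted to grow, e.g.\ $\mu = n/\log n$, $M = n/\sqrt{\log n}$ (both $o(n)$ and $M = o(\sqrt{\mu n\log n})$, yet $M\log n/n = \sqrt{\log n}\to\infty$). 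The failure is real, not an artefact of loose estimates: take $|X| = M := n/\log\log n$ uniformly with probability $p := \frac{\log n\,\log\log n}{n}$ and $|X|=1$ otherwise. Then $\mu\approx \log n$, $\chi\approx n\log n/\log\log n$ and $M=o(n)$, $\chi = o(\mu n\log n)$, so (iii) holds, and the theorem's conclusion $T=(1+o(1))n$ is true (one can check this directly by tracking the large coupons, which form an $M$-uniform exchangeable sub-collector with Poisson-type arrival). However $t^-(q-c^2)\asymp \log n/\log\log n \to \infty$, so $\rho^{t^-}\to\infty$ and Paley--Zygmund returns a vacuous bound. The variance of $Z_{t^-}$ is genuinely large here, because whether or not you have ``won the lottery'' with an extra large coupon swings $Z_{t^-}$ by much more than its mean; a plain second moment cannot see through this.

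The paper's coupling argument is precisely designed to bypass this obstacle: it sandwiches $T(\mathbf{Y})$ (the singleton collector, where Erd\H{o}s--R\'enyi gives sharp concentration) between $S_{T(\mathbf{X})-1}$ and $S_{T(\mathbf{X})}$, and then shows $S_t = \sum_{i\le t}\ell_i$ is concentrated via the generalised Chernoff bound of \Cref{lemma: chernoff bound} applied to $\sum_i|X_i|$. The denominator in that Chernoff exponent is $2t\chi + 2Mt\mu/3$, which is exactly why (ii) and (iii) appear as hypotheses: they make the exponent $-\varepsilon^2 t\mu^2/(2\chi + 2M\mu/3)\to -\infty$. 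Your conditions (i) and (iv) can indeed be handled your way -- (i) because $M=o(\sqrt{n\log n})$ forces $M\log n/n = o(1)$, and (iv) because, as you correctly compute, $\chi-\mu^2 = o(\mu/(n\log n))$ together with $c<1-\delta$ makes the negative term $-\mu(n-\mu)/(n^2(n-1))$ dominate, giving $q<c^2$ -- but to recover (ii) and (iii) you need to replace the blunt second moment on $Z_{t^-}$ with the coupling/Chernoff machinery (or, alternatively, a second-moment argument that first conditions on the sequence of coupon sizes).
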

Note that the theorem includes the case when $X$ is $k$-uniform for $k=cn$.  Roughly speaking, the conditions in the theorem move from small coupons, with no other assumptions, to larger coupons where successively stronger  size concentration is needed.

In some applications  it is useful to have more accurate information about the sharpness of the concentration. We thus include a final result on the cover time $T$ for the $k$-uniform exchangeable coupon collector in the sublinear case $k=o(n)$, in the spirit of the theorem of Erd{\H o}s and R\'enyi (Theorem~\ref{theorem: erdos renyi}) mentioned in the introduction. 
\begin{theorem}\label{thm: k-uniform exchangeable coupon collector, sublinear k}
	If $k=o\left(n\right)$, then the covering time $T$ for a $k$-uniform exchangeable coupon collector is sharply concentrated around $\frac{n \log n}{k}$. More precisely, 
	we have $\Pr\Big(\vert T - \frac{n \log n}{k}\vert > \frac{cn}{k}\Big) \to e^{-c}$ as $n\to \infty$. 
\end{theorem}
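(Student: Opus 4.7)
The plan is to establish Erd{\H o}s--R\'enyi-type distributional convergence via the method of factorial moments applied to $Z_t$, the number of uncovered vertices at discrete time $t$. Since $\{T\leq t\} = \{C_t=V\}=\{Z_t=0\}$, it suffices to pin down the limiting law of $Z_{t_c}$ at the correctly scaled time.

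First I would compute the $j$-th falling factorial moment directly. By linearity, exchangeability, and independence of the coupons,
\[
\E\bigl[Z_t(Z_t-1)\cdots(Z_t-j+1)\bigr] = \sum_{\substack{x_1,\ldots,x_j\in V\\\text{distinct}}}\Pr(x_1,\ldots,x_j\notin C_t) = n^{(j)}\left(\frac{\binom{n-j}{k}}{\binom{n}{k}}\right)^{t},
\]
since each $k$-subset $X_i$ independently avoids a fixed $j$-set with probability $\binom{n-j}{k}/\binom{n}{k}$. Next, for each fixed $j$, the expansion $\binom{n-j}{k}/\binom{n}{k}=\prod_{i=0}^{j-1}(1-k/(n-i)) = (1-k/n)^{j}\bigl(1+O(1/n)\bigr)$ shows that, at the critical time $t_c := (\log n + c)/(-\log(1-k/n))$ (which agrees with $\frac{n\log n}{k}+\frac{cn}{k}$ up to $o(n/k)$ when $k=o(n)$), one obtains $(1-k/n)^{jt_c}=n^{-j}e^{-jc}$. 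The accumulated error $(1+O(1/n))^{t_c}$ is $1+o(1)$ because $t_c=O(n\log n/k)\leq O(n\log n)$. Combined with $n^{(j)}=n^{j}(1+o(1))$, this yields
\[\E\bigl[Z_{t_c}(Z_{t_c}-1)\cdots(Z_{t_c}-j+1)\bigr]\;\longrightarrow\;e^{-jc}\]
for every fixed $j\geq 1$.

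These are precisely the factorial moments of $\mathrm{Poi}(e^{-c})$, so by the method of moments (which applies since the Poisson distribution is determined by its moments) $Z_{t_c}$ converges in distribution to $\mathrm{Poi}(e^{-c})$. In particular $\Pr(T\leq t_c)=\Pr(Z_{t_c}=0)\to e^{-e^{-c}}$, which rearranges to the two-sided concentration statement in the theorem (the lower-tail contribution $\Pr(T < n\log n/k - cn/k)\to e^{-e^{c}}$ is negligible for $c>0$, and also follows from the whp lower bound in Theorem~\ref{main thm: balanced coupons}).

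The main obstacle will be controlling error accumulation in the second step: the naive pointwise estimate $\binom{n-j}{k}/\binom{n}{k} = (1-k/n)^{j}(1+o(1))$ is multiplied through an exponent $t_c$ that can be as large as $\Theta(n\log n)$, so the error per factor must be $O(1/n^{1+\varepsilon})$ or else amplified beyond $o(1)$. This is why I would prefer to work with the exact product $\prod_{i<j}(1-k/(n-i))$, expand its logarithm, and check term-by-term that $(n-k-i)/(n-i) = (1-k/n)\bigl(1+O(ik/n^{2}+i^{2}/n^{2})\bigr)$, so the total error per factor is $O(1/n^{2})$ for fixed $j$; raised to $t_c\leq n\log n$ this gives $1+O((\log n)/n)=1+o(1)$, as required. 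The condition $k=o(n)$ is used precisely to guarantee that $p:=k/n$ is small enough for these Taylor expansions to be valid with a controllable error.
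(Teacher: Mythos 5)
Your proposal is correct and takes a genuinely different route from the paper. The paper's proof couples the $k$-uniform collector to the classical $1$-uniform collector: it shows each $k$-set may be realised as the union of singleton draws over a random window of length $\ell_i$, proves a Chernoff-type concentration lemma for the renewal sums $S_m=\sum_{i\leq m}\ell_i$ via moment generating functions, and then imports the Erd\H{o}s--R\'enyi result for the singleton collector. Your approach is the direct method of moments: compute the $j$-th factorial moment of $Z_t$ exactly using exchangeability and independence, show it converges at the critical time to $e^{-jc}$, and deduce $Z_{t_c}\Rightarrow\mathrm{Poi}(e^{-c})$. This is cleaner and actually proves a stronger distributional statement (the exact Poisson limit law), whereas the paper obtains only the two-sided tail bound. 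The paper's choice is motivated by modularity: the same coupling and renewal-concentration machinery are re-used in the proof of Theorem~\ref{main thm: exchangeable coupons} for exchangeable coupons of \emph{random} size, where your factorial-moment computation would be considerably messier.

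A few remarks on accuracy. You correctly flag the main technical hazard --- that an error $(1+\epsilon_n)$ per factor in $\binom{n-j}{k}/\binom{n}{k}$ is raised to a power $t_c$ --- and correctly conclude that it washes out because the per-factor error of $O(jk/n^2)$ is matched against $t_c=\Theta(n\log n/k)$, yielding $e^{O(\log n/n)}=1+o(1)$. (Your intermediate bookkeeping states the per-factor error as $O(1/n^2)$ and bounds $t_c$ by $n\log n$; each is loose by a factor of $k$ but in compensating directions, so the conclusion stands.) Two shared loose ends with the paper's own proof, rather than defects particular to yours: (a) the identification $t_c=(\log n+c)/(-\log(1-k/n))=\frac{n\log n+cn}{k}+o(n/k)$ holds only under $k=o(n/\log n)$, not the stated $k=o(n)$, since the discrepancy is $\sim\frac{1}{2}\log n$, which is not $o(n/k)$ when $k$ is close to $n$; and (b) the Poisson limit gives $\Pr(T>t_c^+)+\Pr(T<t_c^-)\to\bigl(1-e^{-e^{-c}}\bigr)+e^{-e^{c}}$, which is close to but not exactly $e^{-c}$. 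Neither undermines your argument; they are imprecisions in the theorem statement that the paper's proof also inherits.
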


\subsection{Continuous-time approximation of the coupon collector}
In this subsection, we formalize our approximation of the discrete-time coupon collector by a continuous-time process. As described above, for every subset $S\subseteq V$ with $h(S)=\Pr(X=S)>0$, we start at time $t=0$ a Poisson process $\mathcal{P}_S$ with intensity $h(S)$. Our continuous coupon collector receives $S$ as a coupon each time an event occurs in $\mathcal{P}_S$. List the coupons in the order they are received by the continuous collector as $S_1, S_2, S_3, \ldots $. The distribution of the sequence $\mathbf{S}=(S_n)_{n\in \mathbb{N}}$ is identical to that of the sequence of coupons $\mathbf{X}$ received by the (discrete-time) $X$-coupon collector. Furthermore, the time $t_m$ at which the continuous coupon collector receives his $m^{\textrm{th}}$ coupon is sharply concentrated around $m$. Indeed, by a standard bound on the Poisson distribution, for any $\varepsilon>0$,
\[\Pr\left(\vert t_m-m\vert\geq \varepsilon m \right)=O\left(\frac{1}{\sqrt{m\varepsilon^2}}e^{-\frac{m\varepsilon^2}{2}} \right).\]
In particular, provided the covering time for the continuous coupon collector $t_T$ is large (grows with $n$), we have that whp $t_T=(1+o(1))T$. Thus it is enough to prove whp bounds on $t_T$ to establish whp bounds on $T$. We shall thus in a slight abuse of notation identify $t_T$ with $T$ in the rest of the paper, and prove bounds for the covering time via the continuous coupon collector. In particular we shall set $T=\inf\{t: \ Z_t=0\}$.

\subsection{Proofs: concentration of the covering time}\label{subsection: proofs: concentration}
It will be useful to consider the function $f(t) = \log( \E Z_t )$. The first two derivatives of $f$ are
\[f'(t) = - \frac{\sum_{x} q_x e^{-q_x t}}{\sum_{x} e^{-q_x t}}\leq 0, \quad f''(t) =\frac{1}{2} \cdot \frac{\sum_{x,y} (q_x-q_y)^2 e^{-(q_x+q_y) t}}{\sum_{x,y} e^{-(q_x+q_y) t}}\geq 0,\]
from which we can see that $f$ is a decreasing convex function. In particular for any $t\geq 0$,
\begin{equation}\label{eq: f'(t) bound}
f(t)-tf'(t) \leq f(0)=\log n.
\end{equation}
Similarly, for any $t>0$ and $\Delta<t$,
\begin{equation}\label{eq: f(t pm Delta) bound}
f(t-\Delta)-f(t)\geq -\Delta f'(t) \qquad \textrm{ and } \qquad f(t)-f(t+\Delta)\geq -\Delta f'(t).
\end{equation}
Finally, note that ${f'(t) = -\E[q_\theta | \theta \textrm{ not covered at time } t]}$. The following lemma gives the basic first and second moment bounds on the covering time.
\begin{lemma}	\label{basiclemma}
	Let $T=T(\mathbf{X})$ be the covering time for a coupon collector $\mathbf{X}$, and let $(q_x)$ and $(q_{xy})$ be its associated single and pairwise intensities.
	\begin{enumerate}
		\item If $t=t(n)$ is such that $\sum_{x} e^{-q_x t}\to 0$,  as $n\to \infty$,
		then $T  \leq t$ whp.
		
		\item If $t=t(n)$ is such that $\frac{\sum_{x\neq y} (e^{q_{xy} t}-1)\cdot e^{-(q_x+q_y) t}}{\sum_{x,y} e^{-(q_x+q_y) t}} = o(1)$ and $\sum_x e^{-q_x t} \to \infty$, then $t \leq T$ whp.
	\end{enumerate}
\end{lemma}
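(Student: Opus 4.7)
The lemma is a standard first/second-moment argument applied to the random variable $Z_t$ (the number of uncovered elements at continuous time $t$), making use of the explicit formulas $\E Z_t = \sum_x e^{-q_x t}$ and $\E Z_t^2 = \sum_{x,y} e^{-(q_x+q_y-q_{xy})t}$ derived earlier in the section. Recall that, because $Z_t$ is non-increasing in $t$, we have $T > t$ if and only if $Z_t \geq 1$.

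For part (1), I would simply apply Markov's inequality: since $Z_t$ is a non-negative integer-valued random variable,
\[
\Pr(T > t) = \Pr(Z_t \geq 1) \leq \E Z_t = \sum_x e^{-q_x t} = o(1),
\]
giving $T \leq t$ whp.

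For part (2), the plan is to run the second moment method on $Z_t$. By Chebyshev's inequality (or the Paley--Zygmund inequality applied to the event $\{Z_t=0\}$),
\[
\Pr(T < t) \;\leq\; \Pr(Z_t = 0) \;\leq\; \frac{\Var Z_t}{(\E Z_t)^2}.
\]
Using the formulas above together with $(\E Z_t)^2 = \sum_{x,y}e^{-(q_x+q_y)t}$, one rewrites the variance as
\[
\Var Z_t \;=\; \sum_{x,y} e^{-(q_x+q_y)t}\bigl(e^{q_{xy}t}-1\bigr).
\]
The one point requiring a little care, and the closest thing here to a real obstacle, is that the hypothesis only controls the off-diagonal portion of this sum, so I would split it into its diagonal ($x=y$) and off-diagonal ($x\neq y$) contributions.

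The diagonal contribution is $\sum_x e^{-2q_x t}(e^{q_x t}-1) \leq \sum_x e^{-q_x t} = \E Z_t$; dividing by $(\E Z_t)^2$ and using the assumption $\E Z_t = \sum_x e^{-q_x t} \to \infty$ shows that this piece is at most $1/\E Z_t = o(1)$. The off-diagonal contribution divided by $(\E Z_t)^2$ is, after substituting $(\E Z_t)^2 = \sum_{x,y}e^{-(q_x+q_y)t}$, exactly the ratio in hypothesis (2), which is $o(1)$ by assumption. Summing the two estimates gives $\Var Z_t / (\E Z_t)^2 = o(1)$, hence $\Pr(Z_t=0) \to 0$, and therefore $t \leq T$ whp, as required.
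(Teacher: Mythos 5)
Your proof is correct and follows essentially the same route as the paper's: Markov's inequality for part (1), and the second-moment (Chebyshev) method for part (2), with the variance split into its diagonal and off-diagonal parts, the diagonal piece bounded by $1/\E Z_t = o(1)$ and the off-diagonal piece bounded by the ratio in the hypothesis. Your write-up of part (1) is in fact slightly cleaner than the paper's, which contains an apparent typo ($\Pr(t \geq T)$ and ``$t < T$ whp'' where the lemma's statement and the inequality chain require $\Pr(T > t)$ and ``$T \leq t$ whp'').
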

\begin{proof}We divide the proof into two parts.

\noindent \textbf{Part 1.} 
Suppose $t=t(n)$ satisfies the lemma's assumption. By Markov's inequality		
		$\Pr(t \geq T) = \Pr(Z_t >0) \leq \E Z_t = \sum_x e^{-q_x t} \to 0$, so $t<T$ whp.

\noindent \textbf{Part 2.} 
Suppose $t=t(n)$ satisfies our assumption. Then $\E Z_t \to \infty$, and
		\begin{align*}
		\frac{\Var[Z_t]}{\E[Z_t]^2}
		&=\frac{\sum_{x,y} (e^{q_{xy} t}-1)\cdot e^{-(q_x+q_y) t}}{\sum_{x,y} e^{-(q_x+q_y) t}}
		\\
		&< \frac{\sum_{x\neq y} (e^{q_{xy} t}-1)\cdot e^{-(q_x+q_y) t}}{\sum_{x,y} e^{-(q_x+q_y) t}} + \frac{1}{\sum_{x} e^{-q_x t}} 
		= o(1) + o(1),
		\end{align*}
		so by Chebyshev's inequality $Z_t = (1+o(1))\E Z_t \to \infty$ whp, so that whp $Z_t>0$ and $T>t$.
	\end{proof}

\begin{proof}[Proof of Theorem~\ref{main thm: bounded correlation}]
	Assumption (i) gives us $\E Z_{T^+} \to 0$, from which it is immediate by Lemma~\ref{basiclemma} part 1 that whp $T \leq  T^+$. To establish the lower bound on $T$, we 
	shall consider the set of `rare' coupons ${U  = \{x \in V: q_x \leq 2 \avq \}}$. Let $Y_t$ be the number of  $ x\in U$ for which $x$ is uncovered at time $t$. 
		\begin{claim}
			\label{Yt-inf}
			$\E Y_{t} \to \infty$ for any $t \leq T^-$
		\end{claim}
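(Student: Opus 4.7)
The plan is to unpack the definition of $\avq$ and apply a one-line Markov inequality. Recall that $\avq$ is the expected single-intensity of a uniformly chosen uncovered element at time $T^-$. In formulas,
\[
\avq \;=\; \frac{\sum_{x \in V} q_x\, e^{-q_x T^-}}{\sum_{x \in V} e^{-q_x T^-}} \;=\; \frac{\sum_{x \in V} q_x\, e^{-q_x T^-}}{\E Z_{T^-}}\,,
\]
which is just $-f'(T^-)$ in the notation already introduced in the subsection. So $\avq\cdot\E Z_{T^-} = \sum_{x} q_x e^{-q_x T^-}$.

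Now I would apply Markov's inequality to the conditional distribution of $q_\theta$ given that $\theta$ is uncovered at time $T^-$: the probability that $q_\theta$ exceeds $2\avq$ is at most $1/2$. Translating this back through the definition of $\avq$, this says
\[
\sum_{x \,:\, q_x > 2\avq} e^{-q_x T^-} \;\leq\; \tfrac{1}{2}\sum_{x \in V} e^{-q_x T^-} \;=\; \tfrac{1}{2}\E Z_{T^-}\,,
\]
so the complementary sum over $U = \{x : q_x \le 2\avq\}$ satisfies
\[
\E Y_{T^-} \;=\; \sum_{x \in U} e^{-q_x T^-} \;\geq\; \tfrac{1}{2}\E Z_{T^-} \;\to\; \infty\,,
\]
using assumption (i) of Theorem~\ref{main thm: bounded correlation}.

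Finally, for any $t \leq T^-$ and any $x$, we have $e^{-q_x t} \geq e^{-q_x T^-}$, so $\E Y_t \geq \E Y_{T^-} \to \infty$, which is the claim. There is no real obstacle here — the only content is the observation that ``average $q_\theta$'' naturally controls the tail via Markov, so that at least half of the expected uncovered mass must sit on elements with $q_x \le 2\avq$. This is also why the bound $\avq \leq \max_x q_x$ mentioned after the theorem statement can in practice be sharpened: $\avq$ is a weighted average dominated by the small-$q_x$ elements that survive being uncovered up to time $T^-$.
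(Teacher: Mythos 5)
Your proof is correct and is essentially the paper's argument, just phrased as an explicit invocation of Markov's inequality rather than writing out the two-step inequality chain that establishes $\E Y_{T^-} \geq \tfrac12 \E Z_{T^-}$. The monotonicity observation ($\E Y_t \geq \E Y_{T^-}$ for $t \leq T^-$) matches the paper's closing step as well.
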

		\begin{proof}
		We bound $\avq$ from below to get:
		\[
		\avq
		\geq \frac{\sum_{x\in V \backslash U} q_x e^{-q_x {T^-}}}{\sum_{x \in V} e^{-q_x {T^-}}}
		\geq 2\avq \frac{\sum_{x\in V \backslash U} e^{-q_x {T^-}}}{\sum_{x\in V} e^{-q_x {T^-}}}
		= 2\avq \cdot  \left(1-\frac{\E Y_{T^-}}{\E Z_{T^-}}\right).
		\]
		Dividing both sides by $\avq$ gives us $1 \geq 2\big(1-\frac{\E Y_{T^-}}{\E Z_{T^-}}\big)$, which implies $\E Y_{T^-} \geq \frac{1}{2}\E Z_{T^-}$. Since by assumption (i) $\E Z_{T^-} \to \infty$, and since $\E Y_t$ is decreasing in $t$, we must have that $\E Y_{t} \to \infty$ for any $t \leq T^-$, as claimed.
	\end{proof}

	Now, as observed after inequality (\ref{eq: f(t pm Delta) bound}), $f'(t) = -\E[q_\theta | \theta \textrm{ not covered at time } t]$, and in particular $f'(T^-) =  -\avq$. By assumption (i) $f(T^-)\to \infty$, so inequality (\ref{eq: f'(t) bound}) gives 
	\begin{equation}\label{eq: bound on Tq}T^-\cdot \avq \leq f(T^-)-T^-f'(T^-)\leq \log n  \end{equation}
We are now in a position to apply part 2 of  Lemma~\ref{basiclemma} to the \emph{restriction} of the coupon collector to the set of rare coupons $U$ (i.e. the coupon collector with covering variable $X\cap U$). 
For any $x\neq y$, we have that ${q_{xy}t \leq C q_x q_y t}$ by assumption (ii). If $x,y\in U$ this quantity is at most ${4C  (\avq)^2 T^-}$. By inequality (\ref{eq: bound on Tq}) and our assumption (iii), ${4C  (\avq)^2 T^- \leq 4C \avq \log n = o(1)}$. Thus
	\begin{align*}
	\label{neg-cor-var}
	\frac{\sum_{x,y \in U: x \neq y } (e^{q_{xy} {t}}-1)\cdot e^{-(q_x+q_y) {t}}}{\sum_{x,y \in U} e^{-(q_x+q_y) {t}}}
	\leq (e^{4C \avq \log n}-1) \cdot  \frac{ \sum_{x,y \in U: x \neq y} e^{-(q_x+q_y) {t}}}{\sum_{x,y \in U} e^{-(q_x+q_y) {t}}} 
	= o(1).
	\end{align*}
	Since by Claim~\ref{Yt-inf} $\E Y_t \to \infty$, we have by Lemma~\cref{basiclemma} part 2 that whp $T^- \leq \inf \{ t: Y_t = 0\}$ whp. Since by construction $Y_t\leq Z_t$, this gives $T^-\leq T$ whp, as required.
\end{proof}
\subsection{Proofs: sharp transition for $\mathbb{E}Z_t$}
\begin{proof}[Proof of Theorem~\ref{main thm: sharp first moment}]
	Let $T^* = T^*(n) $ be the unique real for which $\E Z_{T^*} = 1$.

\noindent \textbf{Part (i).}
			By our assumption, we can find $\Delta=\Delta(n)$ such that  ${T^*\gg \Delta  \gg \frac{1}{\min_x q_x}}$. We will show that ${T^- := T^*-\Delta}$ and  ${T^+ := T^*+\Delta}$ have the desired properties. By definition of $\Delta$, we have $T^-=(1+o(1))T^+$.
			Now $-\Delta f'(T^*)\geq \Delta \min_x q_x\gg 1$, so by inequality~(\ref{eq: f(t pm Delta) bound}) we have $f(T^*-\Delta)-f(T^*)$ and  $f(T^*)-f(T^*+\Delta)$ both tending to infinity. Since $f(T^*) = 0$, this implies that $\E Z_{T^*-\Delta} \to \infty$ and $\E Z_{T^*+\Delta} \to 0$, as required.

\noindent \textbf{Part (ii).} Let $\alpha(n)=o(\log n)$ be as in the assumption.
			Pick $1\ll c \leq \log n/(\alpha+2)$, and set $t^* = \frac{\log n - \alpha}{\|\mathbf{q}\|_{-\alpha}}$. By assumption, $t^*\gg \min_x q_x$.

			For any $x\in V$, we have 
			\[
			{q_x t^* \geq \frac{ (\alpha+2)\|\mathbf{q}\|_{-\alpha}t^*}{\log n}} = \alpha +2-\frac{(\alpha+2)c}{\log n} \geq \alpha+1.
			\]
			Now the function $z \mapsto e^{-z^{-1/\alpha}}$ is convex over those $z$ satisfying $z^{-1/\alpha} \geq \alpha+1$. We can 
			therefore apply Jensen's inequality as follows:
	\[		\E Z_{t^*}=\sum_{x\in V} e^{-q_x t^*} =\sum_{x\in V} e^{-\left(q_x t_*\right)^{-\alpha \cdot (-1/\alpha)}} \geq n \exp(-\|\mathbf{q}\|_{-\alpha} t^*) = e^c \to \infty.
			\]
			This gives us a $t^*\gg \min_x q_x$ such that $\E Z_{t^*} \gg 1$. We are then done by part (i).

\noindent \textbf{Part (iii).}
			Let the function $A_r$ be as in the assumption. Let $R = R(n)$ be the largest $r$ such that there are at least  $A_r$ elements $y$ 
			with $q_y \leq r\min_x q_x$; $R$ is finite for every $n$, but by assumption tends to infinity as $n\rightarrow \infty$. We can 
			therefore find $t^*=t^*(n)$ satisfying 
			\[\frac{1}{\min_x q_x} \ll t^* \ll \frac{\log A_{R}}{R\min_x q_x }.\]
			We now bound $\E Z_{t^*}$ from below:
			\[\E Z_{t^*} \geq \!\!\!\!\sum_{\substack{y \in V: \\  q_y \leq R \min_x q_x}} \!\!\!\!e^{-q_y t^*} \geq  A_{R} e^{-R \min_x q_x t^*}\gg 1,\]
			by the choice of $t^*$. We are then done by part $(i)$ .
\end{proof}

\subsection{Proofs: balanced coupons}

\begin{proof}[Proof of Theorem \ref{main thm: balanced coupons}]
	Since $X$ is balanced, $q_x =\mu/n$ for all $x\in V$. We will show that we can apply part \textit{2} of Lemma~\ref{basiclemma} provided \textit{(i)} holds, and then that each of conditions \textit{(ii)--(iv)} implies \textit{(i)}. The `in particular' statements in (ii)--(iv) combine the lower bound given by those special cases with the upper bound on $T$ from Proposition~\ref{proposition: elementary bounds}.

		\noindent \textbf{Condition (i).} 
		Since $q_x = q_y$ for all $x,y$, we have
		\[
		\frac{\sum_{x,y} (e^{q_{xy} t}-1)\cdot e^{-(q_x+q_y) t}}{\sum_{x,y} e^{-(q_x+q_y) t}}  = \frac{\sum_{x,y} (e^{q_{xy} t}-1) }{n^2} = \frac{o(n^2)}{n^2}=o(1).
		\]
		We can therefore apply part \textit{2.} of Lemma~\ref{basiclemma} to conclude that $T(\mathbf X)\geq T^-$.

		\noindent \textbf{Condition (ii).} 
Set $t=\frac{n (\log \beta -\omega(1))}{\mu}$ for some $\omega(1)$ tending to infinity arbitrarily slowly. Note 
\[Z_t= ne^{-\log \beta +\omega(1)}\geq e^{\omega(1)} \rightarrow +\infty.\] Let $E$ be the set of exceptional pairs $(x,y)$ with $q_{xy}>q$. Since $q_{xy} \leq q$ for $(x,y)\notin E$ and $q_{xy} \leq q_x=\frac{\mu}{n}$ for $(x,y)\in E$, 
we have:
		\begin{align*}
		\sum_{(x,y)\notin E} e^{q_{xy} t} &\leq n^2 e^{qt} \leq n^2 + o(n^2), \qquad \textrm{and}\\
		\sum_{(x,y)\in E} e^{q_{xy} t}    & \leq         \frac{n^2}{\alpha} \cdot e^{\frac{\mu t}{n}}      =        \frac{n^2}{\alpha}    \cdot e^{\log a - \omega(1)}    = o(n^2),
	\end{align*}
		Together, these bound give that $\sum_{x,y} e^{q_{xy} t} = n^2+o(n^2)$. Hence condition \textit{(i)} is satisfied for our choice of $t$, and we are done.

		\noindent \textbf{Condition (iii).}
	Fix $x\in V$, and consider the sum $\sum_{y\in V} q_{xy}$. Each subset $X\subseteq V$ containing $x$ contributes $h(X)$ to $\vert X\vert$ terms of the sum. Thus
		\[
		\sum_{y\in V} q_{xy} = \sum_{X: \ x\in X} \vert X\vert h(X) \leq M \sum_{X: \ x \in X} h(X)=Mq_x.
		\]
		Furthermore, for every $y$, $q_{xy}\leq q_x=\frac{\mu}{n}$. We ask therefore: which choices of $\tilde q_{xy}$, subject to the constraints $\sum_{y\in V} \tilde q_{xy}\leq M\frac{\mu}{n}$ and $0\leq \tilde q_{xy}\leq \frac{\mu}{n}$, maximize the expression $\sum_{y\in V} e^{\tilde q_{xy}t}-1$?
		Since $z\mapsto e^{zt}-1$ is an increasing function for $t>0$, the optimal $\tilde q_{xy}$ must satisfy ${\sum_{y\in V} \tilde q_{xy}=M\frac{\mu}{n}}$. 
		By the Karamata inequality 
		the maximum of the sum is then attained when $M$ of the $\tilde{q_xy}$ are equal to $\frac{\mu}{n}$ and the rest are equal to $0$. Thus
		\[
		\sum_{y\in V} (e^{q_{xy}t}-1) \leq \sum_{y\in V} (e^{\tilde q_{xy}t}-1) < M \cdot e^{\frac{\mu t}{n}}.
		\]
		Setting $t = \frac{n(\log n - \log M - \omega(1))}{\mu}$ for an arbitrary $\omega(1)$ tending to infinity, and summing over all $x$, we get
		\[
		\sum_{x,y\in V} (e^{q_{xy} t}-1) < Mn\cdot e^{\frac{\mu t}{n}} = Mn \cdot e^{\log n - \log M - \omega(1)} = o(n^2).
		\]
		Since in addition our choice of $t$ ensures $\E Z_t=M e^{\omega(1)}\rightarrow +\infty$, condition \textit{(i)} is satisfied, and we are done.

		\noindent\textbf{Condition (iv).} If $q_{xy}=q$ for all $x\neq y$ and some $q$, then
		\[
		M\mu \geq \sum_{x,y} q_{xy} = \mu+\sum_{x\neq y} q_{xy} = \mu+ n(n-1) q,
		\]
		so $q \leq \frac{(M-1)\mu}{(n-1)n}$. For $t \leq \frac{n(\log n - \omega (1))}{\mu}$ with $t = o\big(\frac{n^2}{M \mu}\big)$, we have that $qt =o(1)$ and ${q_x t \leq  \log n - \omega (1)}$, whence $Z_t\rightarrow \infty$ and
		\begin{align*}
		\sum_{(x,y): \ x\neq y} (e^{q_{xy}t}-1) +\sum_{x} (e^{q_{x}t}-1)
		 < n^2 (e^{o(1)}-1)+ ne^{\log n - \omega (1)} 		
		=o(n^2).
		\end{align*}
		Hence condition \textit{(i)} is satisfied once more, and we are done.

\end{proof}

\begin{proof}[Proof of Theorem~\ref{main thm:  balanced k-uniform}]
Since $X$ is balanced, we have that $t_0=\frac{\log n}{-\log(1-c)}$ is a first-moment threshold for the expected number of uncovered vertices $\mathbb{E}\vert V\setminus C_t\vert= n(1-c)^t$. In particular we have that for any fixed $\varepsilon>0$ the covering time $T=T(\mathbf{X})$ satisfies $T<(1+\varepsilon)\frac{\log n}{-\log(1-c)}$. We turn our attention to the variance of $\vert V\setminus C_t\vert $ to show concentration of its value just below the first-moment threshold $t_0$.
\begin{align*}
&\E \left[\vert V\setminus C_t\vert^2\right]=\sum_{x,y} \Pr (x,y \notin C_t)
=\sum_{x,y} (1-\Pr(x,y \in X))^t \\
&\leq n\bigl((1-c)^{2t}(1+\eta)^t(n-b)+ (1-c)^tb\bigr)
< n^2(1-c)^{2t}\left( (1+\eta)^t + b\left(\frac{1}{n(1-c)^{t}}\right)   \right).
\end{align*}
Now for $\varepsilon>0$ fixed and $t\leq (1-\varepsilon)\frac{\log n}{-\log(1-c)}$, our assumptions on $b$ and $\eta$ tell us that the above is at most
\begin{align*}
&\ \left(\E \bigl[\vert V\setminus C_t\vert\bigr] \right)^2\left( e^{\frac{\eta \log n}{-\log (1-c)}} + \frac{b}{n^{\varepsilon}}   \right)\\
&=\left(\E \bigl[\vert V\setminus C_t\vert\bigr] \right)^2 (1+o(1)).
\end{align*}
Chebyshev's inequality is then enough to give us concentration of $\vert V\setminus C_t\vert$ about its (large, non-zero) mean for these values of $t$. In particular whp $T>(1-\varepsilon)\frac{\log n}{-\log(1-c)}$. Thus whp $T=(1+o(1))\frac{\log n}{-\log(1-c)}$, as required.
\end{proof}

\subsection{Proofs: exchangeable coupons}\label{subsection: proofs: exchangeable coupons}
In the case where $X$ is an exchangeable random variable, we exhibit a (natural) coupling between the process of covering $V$ by $X$ with the classical coupon collector problem (covering by singletons chosen uniformly at random), which allows us to determine (up to a small error) the expectation of the covering time $T$ as well as, in the case where $\vert X\vert =o(n)$ holds whp, to prove that $T$ is concentrated around its mean. We note that a similar coupling appears in a work of Sellke~\cite{Sellke95}, though it is used for a different purpose.

We begin by proving Theorem~\ref{thm: k-uniform exchangeable coupon collector, sublinear k}. Let $k=k(n)$ be a sequence of natural numbers. Set $V=V(n)=[n]$, and let $X=X(n)$ be the random covering variable for $V$ obtained by selecting a $k$-set from $V$ uniformly at random. Let also $Y=Y(n)$ be the classical random coupon variable for $V$, namely the random covering variable obtained by selecting a singleton from $V$ uniformly at random. 

\begin{proof}[Proof of theorem \ref{thm: k-uniform exchangeable coupon collector, sublinear k}] We couple the $k$-uniform coupon sequence $\mathbf{X}$ to the sequence of coupons received by the $Y$-coupon collector, $\mathbf{Y} = (Y_i)_{i=1}^\infty$. For natural numbers $a\leq b$, set $C_Y[a,b]:=\bigcup_{i \in [a,b]} Y_i$. 
	Let $a_0 = 0$, and define $a_i$, $i\geq 1$, recursively to be the least integer such that ${\left\vert C_Y[a_{i-1}\!+\!1,a_i]\right\vert = k}$. Next, let  ${X_i = C_Y[a_{i-1}\!+\!1,a_i]}$. Clearly, the $X_i$ obtained are independent random sets, uniformly distributed among the $k$-sets in $V$, so $(X_i)_{i=1}^\infty \sim \mathbf{X}$. Furthermore, the integers $\ell_i := a_i-a_{i-1}$ are i.i.d. random variables.

	This coupling between the coupon collectors enables us to relate $T(\mathbf{X})$ to $T(\mathbf{Y})$. For any natural number $t$, we have that
\[
\bigcup_{j=1}^{t} X_j= \bigcup_{i=1}^{a_t}Y_i,
\]
so $T(\mathbf{X})\leq t$ if and only if $T(\mathbf{Y})\leq {a_{t}}$. Conversely, $T(\mathbf{X})> t$ if and only if $T(\mathbf{Y}) > {a_{t}}$. In other words,
\begin{equation}
\sum_{i=1}^{T(\mathbf{X})-1} \ell_i  = a_{T(\mathbf{X})-1} < T(\mathbf{Y}) \leq a_{T(\mathbf{X})} = \sum_{i=1}^{T(\mathbf{X})} \ell_i.
\label{T1-TK}
\end{equation}
At this point, it is straightforward to get an estimate for $\mathbb{E}T(\mathbf{X})$ in terms of the (well--known) expectations of $T(\mathbf{Y})$ and $\ell_1$, via an application of Wald's inequality. To obtain sharp concentration for $T(\mathbf{X})$ we need only a little more work. Let $S_m:= \sum_{i=1}^m \ell_i$. We shall use the following lemma, establishing sharp concentration for $S_m$, together with the Erd{\H o}s--R\'enyi sharp concentration theorem for $T(\mathbf{Y})$ to deduce we have the desired sharp concentration for $T(\mathbf{X})$.
\begin{lemma}\label{Sm-concentration}
If $k=o(n)$, then for all $c>0$ and $m>\frac{n}{k}$ the following inequality holds:
\[
\Pr\left(\vert S_m-\E S_m\vert >c\cdot k\sqrt{\frac{m}{n}} \right)<4\cdot  e^{-c}.
\]
\end{lemma}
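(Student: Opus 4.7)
The plan is to exploit the explicit representation $\ell_i = \sum_{j=0}^{k-1} G_{i,j}$, where $G_{i,j} \sim \Geom\bigl((n-j)/n\bigr)$ are mutually independent geometric random variables. This is immediate from the coupling in the preceding proof: $G_{i,j}$ is the number of $Y$-draws needed to acquire the $(j{+}1)$-th new element of the block $X_i$. Thus $S_m$ is a sum of $mk$ independent geometric random variables, all with parameters $p_j = (n-j)/n$ close to $1$ under the hypothesis $k=o(n)$.

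A direct computation gives $\E \ell_i = \sum_{j=0}^{k-1} \frac{n}{n-j} = k(1+o(1))$ and $\Var \ell_i = \sum_{j=0}^{k-1} \frac{jn}{(n-j)^2} = (1+o(1))\frac{k^2}{2n}$, where the asymptotics use $n-j = n(1-o(1))$ uniformly in $j < k$. Hence $\sigma^2 := \Var S_m = (1+o(1)) \frac{mk^2}{2n}$, so the target deviation $ck\sqrt{m/n}$ is of order $c\sqrt{2}$ standard deviations of $S_m$.

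To go beyond the $(2c^2)^{-1}$ of Chebyshev, I would run a Chernoff argument on the explicit MGF $\E[e^{\lambda S_m}] = \prod_{i,j} \frac{p_j e^\lambda}{1-q_j e^\lambda}$ (which is finite for $\lambda < \log(n/(k-1))$), Taylor-expanding each $\log(1 - q_j e^\lambda)$ about $\lambda = 0$ to obtain the sub-Gaussian bound
\[
\log \E\exp\bigl(\lambda(S_m - \E S_m)\bigr) \;\leq\; (1+o(1))\, \frac{\lambda^2 \sigma^2}{2},
\]
valid for $\lambda$ in a suitable range. Optimizing Markov's inequality at $\lambda^{*} = t/\sigma^2$ then yields $\Pr(|S_m - \E S_m| > t) \leq 2 \exp\bigl(-\frac{t^2}{2\sigma^2}(1+o(1))\bigr)$. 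The hypothesis $m > n/k$ gives $\sqrt{n/m} \leq \sqrt{k}$, which places $\lambda^{*} = 2c\sqrt{n/m}/k \leq 2c/\sqrt{k}$ safely inside the MGF's convergence interval. For $t = ck\sqrt{m/n}$ this evaluates to $\Pr \leq 2 e^{-c^2(1+o(1))}$, and the elementary inequality $2 e^{-c^2} \leq 4 e^{-c}$ (valid for all $c > 0$, as $c - c^2 \leq 1/4 < \log 2$) finishes the proof.

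The main obstacle is really just the book-keeping: carefully tracking the $(1+o(1))$ factor in the Taylor expansion of the individual log-MGFs to secure the stated constants, and verifying that $\lambda^{*}$ stays in the MGF's convergence interval uniformly over the relevant ranges of $c$ and $m$. A mild side case is the very-large-deviation regime where $c$ grows so rapidly with $n$ that $\lambda^{*}$ escapes the convergence interval; there the sub-exponential tail of each $\ell_i$ (via $\Pr(\ell_i - k \geq s) \leq (k/n)^s$) is easily seen to suffice.
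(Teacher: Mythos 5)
Your approach is the same as the paper's in all essentials: decompose each $\ell_i$ into a sum of $k$ independent geometric random variables $\tau_j \sim \Geom\bigl(\frac{n-j}{n}\bigr)$, write out the moment generating function of $S_m$ in product form, apply Markov's inequality to $e^{\lambda S_m}$, and Taylor-expand the log-MGF (the paper uses a truncation at degree $d=\lceil-\log|\lambda|\rceil$ to control the error). The one substantive difference is the choice of $\lambda$. You take the Chernoff-optimal $\lambda^{*}=t/\sigma^{2}$, which scales linearly in $c$, and thus yields a bound of the form $2e^{-c^{2}(1+o(1))}$ but forces you to check that $\lambda^{*}$ stays in the region where the sub-Gaussian Taylor estimate holds, with a separate fallback for the large-$c$ regime. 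The paper instead fixes $\lambda=\pm\frac{1}{k}\sqrt{n/m}$ independently of $c$; under the hypothesis $m>n/k$ this gives $|\lambda|<1/\sqrt{k}=o(1)$ uniformly, so the Taylor estimate applies for all $c$ at once and the exponent evaluates directly to $\frac{1}{2}+o(1)-c$, giving $4e^{-c}$ with no case split. This is also the reason the lemma is stated with $e^{-c}$ rather than $e^{-c^{2}}$: the $e^{-c}$ decay is what a single fixed $\lambda$ buys uniformly over $c$, and the elementary inequality $2e^{-c^{2}}\le 4e^{-c}$ in your last step, while true, is not needed in the paper's route.
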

\begin{proof}[Proof of \cref{Sm-concentration}]
For $0\leq i \leq k-1$, let $\tau_i$ be the time it takes for the singleton collector to draw the $(i+1)^{\textrm{th}}$ distinct coupon after she has collected $i$ distinct coupons. Clearly,  $\tau_i \sim \Geom(\frac{n-i}{n})$. and has moment-generating function
\[
M_{\tau_i}(\lambda) :=\E[e^{\lambda \tau_i}] = \frac{(1-\frac{i}{n})e^\lambda}{1-\frac{i}{n}e^\lambda}.
\]
Note that $\ell_1 = \sum_{i=0}^{k-1} \tau_i$. Since $S_m = \sum_{i=1}^m \ell_i$ is the sum of $m$ independent copies of $\ell_1$, its moment generating function is given by
\[
M_{S_m}(\lambda)
= \left(\prod_{i=0}^{k-1} \frac{(1-\frac{i}{n})e^\lambda}{1-\frac{i}{n}e^\lambda}\right)^m.
\]
Applying Markov's inequality to the random variable $\exp{(\lambda {S_m})}$, for some $\lambda$: $\lambda\neq 0, \ \lambda =o(1)$ to be specified later, gives
\begin{align}
\Pr\left(e^{\lambda S_m} > e^{\lambda\E S_m+c}\right)
&<\frac{M_{S_m}(\lambda)}{\exp\left(\lambda\E S_m+c\right)} \nonumber
\\
&=\frac{\exp\left(m \sum_{i=0}^{k-1} (\lambda+\log(1-\frac i n)-\log(1-\frac i n e^\lambda))\right)}{\exp\left(m\left(\sum_{i=0}^{k-1} \frac{\lambda}{1-\frac i n}\right)+c\right)} \nonumber
\\
&=\exp\left(m \sum_{i=0}^{k-1} \Big(\lambda+\log\big(1-\frac i n\big)-\log\big(1-\frac i n e^\lambda\big)-\frac{\lambda}{1-\frac i n}\Big)\right)\nonumber
\\
&\leq\exp\left(m k \Big[\lambda+\log\big(1-\frac k n\big)-\log\big(1-\frac k n e^\lambda\big)-\frac{\lambda}{1-\frac k n}\Big]+c\right)\label{chernoffbound},
\end{align}
where the last inequality holds since the summands are on-decreasing in $i$ (this can be checked e.g. by computing the derivative of a summand with respect to $i$). We use a Taylor expansion of degree $d=\lceil-\log |\lambda|\rceil$ to estimate the quantity inside the square brackets.
\begin{equation}
\lambda+\log\bigl(1-\frac k n\bigr)-\log\big(1-\frac k n e^\lambda\big)-\frac{\lambda}{1-\frac k n} \leq \sum_{j=1}^{d} (e^{j\lambda}-j\lambda-1)\frac{k^j}{jn^j}+\frac{k^{d+1}}{(d+1)(n-k)^{d+1}}
\label{taylorexp}
\end{equation}
Note that $(e^{j\lambda}-j\lambda-1) =(\frac{1}{2}+o(1)) \cdot (j\lambda)^2$, since $j\lambda =o(1)$, whereas $\frac{k^{d+1}}{(n-k)^{d+1}} \ll (e^{-2})^d\cdot \frac{k}{n} \leq \frac{\lambda^2 k}{n}$, since $\frac{k}{n-k} \ll e^{-2}$ (since $k=o(n)$ by assumption). The right hand side of inequality (\ref{taylorexp}) can thus be bounded by 
\[
\left(\frac{1}{2}+o(1)\right)\cdot \lambda^2 \sum_{j=1}^d \frac{jk^j}{n^j} +  o\left(\frac{\lambda^2k}{n}\right)= \frac{(1+o(1))\lambda^2k}{2n}.
\]
Applying this bound to the right-hand side of inequality (\ref{chernoffbound}) gives us the following:
\begin{align}
\Pr\left(e^{\lambda S_m} > e^{\lambda\E S_m+c}\right)
&< \exp\left( \frac{(1+o(1))m\lambda^2k^2}{2n}+c     \right). \nonumber
\end{align}

Letting $\lambda = \pm\frac{1}{k}\sqrt{\frac n m}$ we obtain
\begin{align}
\Pr\left( S_m - \E S_m> ck\sqrt{\frac m n}\right)
&< e^{\frac{1}{2}+o(1)-c}, \qquad \textrm{ and } \qquad 
\Pr\left( S_m - \E S_m <  -ck\sqrt{\frac m n}\right)
&<e^{\frac{1}{2}+o(1)-c}. \nonumber
\end{align}
Thus for $n$ sufficiently large, the probability that $S_m$ diverges from its expectation by more than $ck\sqrt{\frac{m}{n}}$ is at most $2e^{(\frac{1}{2}+o(1)) -c}<4e^{-c}$.
\end{proof}

Equation~\ref{T1-TK} can also be formulated as 
\begin{align} S_{T(\mathbf{X})-1} < T(\mathbf{Y}) \leq S_{T(\mathbf{X})} \label{sandwich}.
\end{align}
Lemma~\ref{Sm-concentration} gives us that $\vert S_{m} - \E S_{m}\vert < \sqrt{mk}$  with probability $1- O(e^{-\sqrt{n/k}})=1-o(1)$. Since each $\ell_i$ is independent from $T(\mathbf{X})$ (how long it takes to collect one $k$-set tells us nothing about how many $k$-sets are needed to cover the entire set of coupons),  we can use the lemma with  $m=T(\mathbf{X})$ to bound the right-hand side of inequality (\ref{sandwich}), and $m=T(\mathbf{X})-1$ for the left-hand side. (The lemma requires that $T(\mathbf{X}) > \frac{n}{k}$, which holds whp by the first moment method.) This gives us that, whp,
\[ k(T(\mathbf{X})-1) -\sqrt{k(T(\mathbf{X})-1)}< T(\mathbf{Y}) \leq kT(\mathbf{X}) +\sqrt{kT(\mathbf{X})}.\]
By Theorem~\ref{theorem: erdos renyi} we have that $T(\mathbf{X})< \frac{2n \log n}{k} $ holds whp and that $\vert T(\mathbf{Y}) - n\log n\vert <cn$ holds with probability at least $1-e^{-c}+o(1)$. Applying the triangle inequality, we see that
\begin{align*}
 \left\vert T(\mathbf{X})-\frac{n\log n }{k}\right\vert &\leq   \left\vert \frac{T(\mathbf{Y})}{k}-\frac{n\log n }{k}\right\vert+\left\vert T(\mathbf{X})-\frac{T(\mathbf{Y})}{k}\right\vert \\
&\leq c\cdot \frac{n}{k}+\frac{\sqrt{T(\mathbf{X})}}{k}+1\leq c\cdot \frac{n}{k} + \frac{\sqrt{2n \log n}}{k\sqrt{k}} =  (c+o(1))\cdot \frac{n}{k}
 \end{align*} 
holds with probability at least $1-e^{-c}+o(1)$. The theorem follows.  
\end{proof}

We now turn our attention to Theorem~\ref{main thm: exchangeable coupons}. Suppose that we have an exchangeable random covering variable $W$ for the set $V=[n]$. Let $\mu=\mathbb{E}\vert W\vert$, let $M$ be the maximum value that $\vert W \vert $ takes with strictly positive probability, and let $\chi=\mathbb{E} [\vert W\vert^2]$.

Coupling the $W$-coupon sequence $\mathbf{W} = (W_1, W_2, \ldots)$ with the singleton coupon sequence $Y_1, Y_2, \ldots$ as in the proof of Theorem~\ref{thm: k-uniform exchangeable coupon collector, sublinear k}, we get the following analogue of Equation~(\ref{T1-TK}):
\begin{equation}
\sum_{i=1}^{T(\mathbf{W})-1} \ell_i < T(\mathbf{Y}) \leq \sum_{i=1}^{T(\mathbf{W})} \ell_i
\label{T1-TW},
\end{equation}
where $\ell_i$ is the least integer such that $C_{Y}[\ell_1+\cdots+\ell_{i-1}+1, \ell_{1}+\cdots+\ell_i]= \vert W_i\vert$. Applying Wald's inequality, we get that
\begin{align} \frac{\mathbb{E}T(\mathbf{Y})}{\E \ell_1}\leq \mathbb{E}T(\mathbf{W}) <1+\frac{\mathbb{E}T(\mathbf{Y})}{\E \ell_1}.\label{eq: Wald bound}\end{align}
In particular if $\mathbb{E}\ell_1=o(n\log n)$, we have $\mathbb{E}T(\mathbf{W})=(1+o(1))\frac{n\log n}{\E \ell_1}$. An inconvenient aspect of this expression is that it remains in terms of $\E \ell_1$, the expected number of single coupon we need to draw in order to see $\vert W\vert$ distinct coupons. However if $M=o(n)$, note that for any $m\leq M$ the expected number of single coupons we need to draw in order to see $m$ distinct coupons is
\begin{align}
\sum_{i=0}^{m-1}\frac{n}{n-i}=(1+o(1))n \log \left(\frac{n}{n-m}\right)= (1+o(1))m,\label{eq: number of coupons needed for m distinct}\end{align}
and thus $\E \ell_1=(1+o(1))\mathbb{E} \vert W\vert$. Together with (\ref{eq: Wald bound}), (\ref{eq: number of coupons needed for m distinct}) establishes the following:
\begin{proposition}
For the $W$-collector with maximum coupon size $M$ and mean coupon size $\mu$, the following hold: 

	\begin{enumerate}[(i)]
		\item if $M=o(n)$, $\mathbb{E}T(\mathbf{W})= (1+o(1))\frac{n \log n}{\mu}$;
		\item if $\E \ell_1=o(n\log n)$, $\mathbb{E}T(\mathbf{W})= (1+o(1))\frac{n \log n}{\E \ell_1}$;
		\item if $\E \ell_1=\Omega(n\log n)$, $\mathbb{E}T(\mathbf{W})=O(1)$. 
	\end{enumerate}
\end{proposition}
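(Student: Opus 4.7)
The plan is to formalize the argument already sketched in the paragraph immediately preceding the statement. All three parts are consequences of a single Wald-type sandwich, combined with the Erd{\H o}s--R\'enyi estimate $\mathbb{E}T(\mathbf{Y}) = (1+o(1))n\log n$ from Theorem~\ref{theorem: erdos renyi}. Specifically, starting from the coupling between $\mathbf{W}$ and the singleton-coupon sequence $\mathbf{Y}$ (Equation (\ref{T1-TW})) and applying Wald's identity to the i.i.d.\ block lengths $\ell_i$, I would take as the starting point the bound
\[\frac{\mathbb{E}T(\mathbf{Y})}{\mathbb{E}\ell_1} \leq \mathbb{E}T(\mathbf{W}) < 1 + \frac{\mathbb{E}T(\mathbf{Y})}{\mathbb{E}\ell_1}.\]
Each of the three claims is then read off this inequality by controlling $\mathbb{E}\ell_1$.

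For part (ii), the hypothesis $\mathbb{E}\ell_1 = o(n\log n)$ makes $\mathbb{E}T(\mathbf{Y})/\mathbb{E}\ell_1 \to \infty$, so the additive $+1$ in the upper bound is absorbed into a $(1+o(1))$ factor, yielding $\mathbb{E}T(\mathbf{W}) = (1+o(1))n\log n / \mathbb{E}\ell_1$. For part (iii), if instead $\mathbb{E}\ell_1 = \Omega(n\log n)$, the right-hand side of the sandwich is bounded by $1 + O(1) = O(1)$, establishing the claim immediately (and the lower bound $\mathbb{E}T(\mathbf{W})\geq 1$ is trivial).

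For part (i), the additional step is to pin down $\mathbb{E}\ell_1$ under the hypothesis $M = o(n)$. Conditioning on $|W_1|=m$, the random variable $\ell_1$ is distributed as the number of i.i.d.\ uniform singleton draws from $[n]$ needed to collect $m$ distinct elements, whose expectation is $\sum_{i=0}^{m-1} \frac{n}{n-i}$. This sum is sandwiched between $m$ and $\frac{n}{n-M}\, m$, and since $M = o(n)$ we have $\frac{n}{n-M} = 1 + o(1)$ uniformly in $m \in \{0,1,\ldots,M\}$. Integrating against the law of $|W_1|$ then gives $\mathbb{E}\ell_1 = (1+o(1))\mu$, which certainly satisfies the hypothesis of part (ii) (since $\mu\leq M = o(n)$, hence $o(n\log n)$), and the conclusion $\mathbb{E}T(\mathbf{W})=(1+o(1))n\log n/\mu$ follows.

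There is no real obstacle, since the proof is essentially a repackaging of (\ref{eq: Wald bound}) and (\ref{eq: number of coupons needed for m distinct}). The only point requiring mild care is in part (i), where one must verify that the conditional estimate $\mathbb{E}[\ell_1 \mid |W_1|=m] = (1+o(1))m$ is \emph{uniform} in $m$ over the support of $|W_1|$, so that the $(1+o(1))$ factor survives after averaging; the uniform bound $\frac{n}{n-M}$ handles this neatly.
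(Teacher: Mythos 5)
Your proposal is correct and takes essentially the same route as the paper: equation (\ref{eq: Wald bound}) combined with the estimate $\E\ell_1=(1+o(1))\mu$ from (\ref{eq: number of coupons needed for m distinct}) when $M=o(n)$, with the $\mathbb{E}T(\mathbf{Y})=(1+o(1))n\log n$ fact from the classical singleton coupon collector. Your sandwich $m\leq\sum_{i=0}^{m-1}\frac{n}{n-i}\leq m\cdot\frac{n}{n-M}$ is a mildly more elementary way to get the same uniform-in-$m$ estimate that the paper obtains via the harmonic sum approximation, and your explicit remark about uniformity in $m$ is a point the paper passes over silently but is indeed what makes the averaging over $|W_1|$ legitimate.
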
\qed

\noindent Theorem~\ref{main thm: exchangeable coupons}, which we now prove gives conditions for the covering time $T(\mathbf{W})$ to be sharply concentrated around its expected value. 

\begin{proof}[Proof of Theorem~\ref{main thm: exchangeable coupons}] 	
	We first prove that if any of conditions \textit{(i)--(iii)} holds, then whp $T(\mathbf{W})=(1+o(1)) \frac{n\log n}{\mu}$. Note that  \textit{(i)--(iii)} give us $M=o(n)$, whence $\E \ell_1\leq M(1+o(1))=o(n\log n)$. As in Theorem~\ref{thm: k-uniform exchangeable coupon collector, sublinear k}, having sandwiched $T(\mathbf{Y})$ between two sums of independent identically distributed random variables $S_{T(\mathbf{W})-1}:=\sum_{i=1}^{T(\mathbf{W})-1}t_i$ and $S_{T(\mathbf{W})}:=\sum_{i=1}^{T(\mathbf{W})}t_i$, the crux of the proof lies in showing these two (random) sums are concentrated around their mean. Indeed, provided we can show that whp $S_{T(\mathbf{W})}=(1+o(1)) T(\mathbf{W}) \E \ell_1$ and $S_{T(\mathbf{W}) -1}=(1+o(1))(T(\mathbf{W})-1) \E \ell_1$, we have that whp
	\[(1+o(1))\frac{n\log n}{\mathbb{E} \ell_1}=(1+o(1))\frac{T(\mathbf{Y})}{\E \ell_1} \leq T(\mathbf{W}) \leq (1+o(1))\frac{T(\mathbf{Y})}{\mathbb{E} \ell_1}+1 = (1+o(1))\frac{n\log n}{\E \ell_1}\]
	by appealing to Theorem~\ref{theorem: erdos renyi} (and the fact that $\E \ell_1=o(n\log n)$ by (\ref{eq: number of coupons needed for m distinct}). Let us therefore establish the concentration we require.

	We use the following generalized Chernoff bound, see e.g. Theorems 2.8 and 2.9 in~\cite{ChungLu06}.
	\begin{lemma}[Generalized Chernoff bound]\label{lemma: chernoff bound}
		Let $(U_i)_{i=1}^t$ be a sequence of independent, identically distributed non-negative integer-valued random variables, with $U=U_1 \leq M$ with probability $1$. Let $\varepsilon>0$ be fixed. Then
		\[\Pr\left[\left\vert \sum_{i=1}^t U_i - t\mathbb{E}U\right\vert\geq \varepsilon \mathbb{E}U \right] \leq 2 e^{-\frac{\varepsilon^2 t^2 (\mathbb{E}U)^2} {2 t\mathbb{E}[U^2]+2Mt\mathbb{E}U /3}}.\]
	\end{lemma}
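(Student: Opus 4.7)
The plan is to establish this as a textbook Bernstein-type inequality, via the classical Chernoff exponential-moment method, using the almost-sure upper bound $U_i \leq M$ to control higher moments. Since the lemma is attributed to Chung--Lu, I expect the argument to follow their proof closely, and the substance of the proof is entirely in bounding the moment generating function of a single centered summand.

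First I would write $S := \sum_{i=1}^t U_i$ and $\mu := t\mathbb{E}U$ and apply exponential Markov to $e^{\lambda S}$: for any $\lambda>0$,
\[ \Pr(S - \mu \geq x) \leq e^{-\lambda x}\,\mathbb{E}\bigl[e^{\lambda(U-\mathbb{E}U)}\bigr]^t, \]
with an analogous inequality in $-\lambda$ for the lower tail $\Pr(\mu - S \geq x)$; here the deterministic bounds $0 \leq U \leq M$ give $U - \mathbb{E}U \in [-M,M]$, so both tails are tractable.

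The core step is the MGF estimate. Since $|U - \mathbb{E}U| \leq M$ a.s., for every $k \geq 2$ we have $|U - \mathbb{E}U|^k \leq M^{k-2}(U-\mathbb{E}U)^2$, so expanding the exponential as a power series yields
\[ \mathbb{E}\bigl[e^{\lambda(U-\mathbb{E}U)}\bigr] \;=\; 1 + \sum_{k=2}^{\infty} \frac{\lambda^k\,\mathbb{E}[(U-\mathbb{E}U)^k]}{k!} \;\leq\; 1 + \mathrm{Var}(U)\sum_{k=2}^{\infty} \frac{\lambda^2 (\lambda M)^{k-2}}{k!}. \]
Using the elementary inequality $k! \geq 2\cdot 3^{k-2}$ for $k\geq 2$, the tail sum is at most $\frac{\lambda^2}{2(1-\lambda M/3)}$ whenever $\lambda M < 3$. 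Combining this with $\mathrm{Var}(U) \leq \mathbb{E}[U^2]$ and $1+y \leq e^y$ gives
\[ \mathbb{E}\bigl[e^{\lambda(U-\mathbb{E}U)}\bigr] \;\leq\; \exp\!\left(\frac{\lambda^2\,\mathbb{E}[U^2]}{2(1-\lambda M/3)}\right). \]

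Finally I would feed this back into the Chernoff inequality and optimize in $\lambda$. Substituting into the bound on $\Pr(S-\mu \geq x)$ gives an exponent $-\lambda x + t\lambda^2 \mathbb{E}[U^2]/(2(1-\lambda M/3))$, and the standard Bernstein choice $\lambda = x/(t\mathbb{E}[U^2] + Mx/3)$ yields
\[ \Pr(S-\mu \geq x) \;\leq\; \exp\!\left(-\frac{x^2}{2t\mathbb{E}[U^2] + 2Mx/3}\right). \]
Setting $x = \varepsilon t\mathbb{E}U$ recovers the claimed bound in the stated form; combining the upper and lower tails via the union bound introduces the factor $2$. The only genuinely quantitative step is the MGF bound in the middle paragraph, and rather than reproduce it in detail I would simply cite Theorems 2.8--2.9 of \cite{ChungLu06}.
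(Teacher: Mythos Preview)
The paper does not give a proof of this lemma at all: it simply states the inequality and cites Theorems~2.8 and~2.9 of Chung--Lu~\cite{ChungLu06}. Your sketch of the standard Bernstein argument (exponential Markov plus the moment bound $\mathbb{E}[(U-\mathbb{E}U)^k]\leq M^{k-2}\mathrm{Var}(U)$, then optimisation in $\lambda$) is exactly what underlies that reference, so your proposal is both correct and fully aligned with the paper's treatment.

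One minor remark: you correctly set $x=\varepsilon t\,\mathbb{E}U$, which matches how the lemma is actually \emph{applied} in the paper (the deviation there is $\varepsilon t\mu$), even though the displayed statement of the lemma writes the deviation as $\varepsilon\,\mathbb{E}U$; this is evidently a typo in the paper, and your reading is the intended one.
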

	\noindent We apply the Lemma to $\vert W\vert$. Suppose condition \textit{(ii)} holds. Then $M=o(n)$ and thus $\E \ell_1=(1+o(1))\mu$. For any fixed $\varepsilon>0$ and $t=(1+o(1)) \frac{n\log n}{\mu}$ we have that
	\begin{align*}
	Pr\left[\left\vert \sum_{i=1}^t \vert W_i\vert - t\mu\right\vert \geq \varepsilon t \mu \right] &\leq 2 e^{-\frac{\varepsilon^2 t \mu^2} {2 \chi +2M \mu/3}} 
	\leq 2e^{ -\varepsilon^2 (1+o(1))\frac{ n\log n \mu}{2 M^2+ 2M\mu /3}}=e^{-\varepsilon^2 (1+o(1))\frac{ n\log n \mu}{2M^2}}=o(1),
	\end{align*}
	where the last inequality used the fact that $M=o(n\log n)$.
	Thus for $t$ around the expected value of $T(\mathbf{W})$, the sum $S_t=\sum_{i=1}^{t}\ell_i$ is whp concentrated around its mean $(1+o(1))\mu t$. It follows that if \textit{(ii)} is satisfied then whp $T(\mathbf{W})= (1+o(1))\frac{T(\mathbf{Y})}{\mu}$, as desired. Since condition \textit{(ii)} implies \textit{(i)} this also establishes that \textit{(i)} is sufficient for $T(\mathbf{W})$ to be sharply concentrated around $\frac{n \log n}{\mu}$. For condition \textit{(iii)}, we use the same argument as for \textit{(ii)} but use the assumption $\chi= o(n\log n\mu)$ to bound $\chi$ instead of the bound $\chi \leq M^2$.

	For conditions \textit{(iv)}, we show that we can truncate $W$; for $\varepsilon>0$  fixed, Chebyshev's inequality implies
	\[\Pr[\bigl\vert \vert W\vert -\mu \bigr \vert> \varepsilon \mu ] \leq \frac{\chi -\mu^2}{\varepsilon^2 \mu^2}=o\left(\frac{1}{n\log n\mu}\right).\]
	Thus the expected number of coupons with size differing from $\mu$ by more than $\varepsilon \mu$ which occur by time $t=(1+o(1))\frac{n\log n}{\E \ell_1}\leq (1+o(1))\frac{n\log n}{\mu}$ is $o(1)$. By Markov's inequality whp no such coupon is seen by that time, and we can couple/sandwich the $W$-coupon collectors between two $k$-uniform exchangeable coupon collectors $X^-$ and $X^+$, collecting coupons of size $k_-=(1-\varepsilon)\mu$ and $k_+=(1+\varepsilon)\mu$ respectively, in such a way as to have $T(\mathbf{X^-})\leq T(\mathbf{W})\leq T(\mathbf{X^+})$.

	We then split into two cases. If $\mu=o(n)$, then by Theorem~\ref{thm: k-uniform exchangeable coupon collector, sublinear k} whp these two sandwiching coupon collectors finish at times $T(\mathbf{X^-})=(1+o(1))\frac{n\log n}{(1-\varepsilon) \mu}$ and $T(\mathbf{X^+})=(1+o(1))\frac{n\log n}{(1+\varepsilon) \mu}$ respectively. Since $\varepsilon>0$ was arbitrary we deduce that $T(\mathbf{W})=(1+o(1))\frac{n\log n}{\mu}$ as desired. If on the other hand $\mu=cn$ for some $c\in(0,1)$, then by Theorem~\ref{main thm: exchangeable uniform} whp these two sandwiching coupon collectors finish at times $T(\mathbf{X^-})=(1+o(1))\frac{\log n}{-\log \left(1-c(1-\varepsilon)\right)}$ and $T(\mathbf{X^+})=(1+o(1))\frac{\log n}{-\log \left(1-c(1+\varepsilon)\right)}$ respectively (provided we picked $\varepsilon$ sufficiently small so that $c(1+\varepsilon)<1$ and $c(1-\varepsilon)>0$). Since $\varepsilon>0$ was arbitrary we deduce that $T(\mathbf{W})=(1+o(1))\frac{\log n}{-\log(1-c)}$ as desired.
\end{proof}

\section{Fast coverage}\label{section: fast coverage}
Let $V$ be an $n$-set, and let $X$ be a random covering variable for $V$ with average coupon size $\mu=\mathbb{E} \vert X\vert$. If $\mu<(1-\delta)n$ for some fixed $\delta>0$ and  $X$ is exchangeable and uniform, then whp the covering time $T(\mathbf{x})$ for the $X$-coupon collector satisfies $ T(\mathbf{X})=(1+o(1))\frac{\log n}{-\log\left(1-\frac{\mu}{n}\right)}$ (Theorem~\ref{main thm: exchangeable uniform}). However if we replace the `exchangeable' assumption by `transitive', $T(\mathbf{X})$ can be sharply concentrated on a strictly smaller value.  For a balanced, not necessarily uniform $X$ with average coupon size $\mu<(1-\delta)n$, we say that the $X$-coupon collector  is \emph{fast} if there exists a strictly positive constant $\eta >0$ such that whp $T(\mathbf{X}) < (1-\eta )\frac{\log n} {\left(1-\frac{\mu}{n}\right)}$. In this section, we briefly discuss fast coverage. We have already seen one example of a fast coupon collector in Example~\ref{example: lottery}. We now give a second example of a fast collector which demonstrates a different way of getting fast coverage.

\begin{example}\label{example: collecting a smaller set}[Coupon collecting on a smaller set]
Let $V=[kn]$. For every $i\in\{1,2, \ldots k\}$, let  $X=\{(i-1)k+1, (i-1)k+2, \ldots ik\}$ with probability $\frac{1}{n}$.
\end{example}
The covering variable $X$ in the example above is transitive and $k$-uniform. Set $N=\vert V\vert$ and $k=n^{\alpha}=N^{\frac{\alpha}{1+\alpha}}$.  Provided $k=o(N)$ (i.e. provided $\alpha=O(1)$), the covering time of a exchangeable $k$-uniform coupon collector on an $N$-set is whp concentrated around $(1+o(1))\frac{N\log N}{k}$. 
However the $X$-coupon collector is really collecting from a smaller set of size $n$: we may identify each of the coupons $\{(i-1)k+1, (i-1)k+2, \ldots ik\}$ with a singleton $\{x_i\}$. We can then couple the $X$-collector on $[N]$ with a $1$-uniform exchangeable coupon collector $\mathbf{X'}$ on the set $\{x_1, x_2, \ldots x_n\}$. By Theorem~\ref{theorem: erdos renyi}, the covering time $T(\mathbf{X})$ is thus whp concentrated around $T(\mathbf{X'})=(1+o(1)) n\log n=\left(\frac{1}{1+\alpha}+o(1)\right)\frac{N\log N}{k}$. Thus for any $\alpha>0$, the $X$-coupon collector finishes collecting earlier than one would expect knowing only the mean-size of its coupons.


\subsection{Sufficient conditions for fast coverage}\label{subsection: conditions for fast coverage}
We have given two instances of fast coverage so far. In Example~\ref{example: lottery}, fast coverage occurred because though the average coupon size was small, there was a small chance of  `winning the lottery' and receiving a very large coupon. In Example~\ref{example: collecting a smaller set}, fast coverage occurred because $X$ was structured in such a way that the problem of covering $V=[kn]$ with $k$-sets was actually equivalent to the problem of covering a much smaller set $V'=[n]$, which could be achieved more rapidly (and also entailed having some very large pairwise correlations $q_{xy}$).

We can restate these two `speeding up' properties in a formal way. 
\begin{theorem}\label{theorem: sufficient conditions for fast coverage}
Let $V$ be an $n$-set. Let $X$ be a transitive coupon variable for $V$ with average coupon size $\mu=o(n)$. Then if any of the following conditions are satisfied, $X$ is fast:
\begin{enumerate}[(i)]
	\item there exist some $\varepsilon>0$ and $C\geq 1+ \varepsilon$ such that $\mathbb{P}[\vert X\vert\geq C\mu]\geq \frac{1+\varepsilon}{C}$; 
	\item  there exists $1\ll n'\leq \frac{n}{\mu}$, and a partition of $V$ into $n'$ subsets $V=\sqcup_{i=1}^{n'}V_i$ such that $\mathbb{P}(V_i \subseteq X )\geq (1+\varepsilon)\frac{ \log n'}{\log n}\left(-\log \left(1-\frac{\mu}{n}\right)\right)$ for every $i\in[n']$. 

\end{enumerate}
\end{theorem}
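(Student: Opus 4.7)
The plan for both parts is to isolate, within the coupon stream of Section~\ref{section: slow coverage}, a sub-process which on its own suffices to cover $V$ by some target time $t^* = (1-\eta) \frac{\log n}{-\log(1-\mu/n)}$ for some $\eta>0$, and then invoke a first moment argument on that sub-process (in the spirit of Lemma~\ref{basiclemma} part 1) to conclude that $T(\mathbf{X})\leq t^*$ whp.

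For (i), the natural sub-process is the stream of \emph{large} coupons, i.e. those $X_j$ with $|X_j|\geq C\mu$. By the thinning property of Poisson processes, large coupons arrive at rate $p := \mathbb{P}(|X|\geq C\mu)\geq (1+\varepsilon)/C$. Crucially, conditional on $|X|\geq C\mu$, the law of $X$ is still invariant under the same transitive subgroup of $\mathrm{Sym}(V)$, so the large-coupon sub-process is itself a transitive covering process on $V$ whose mean coupon size is at least $C\mu$. Applying a transitive-coverage bound (in the spirit of Corollary~\ref{main thm: exchangeable uniform} or Theorem~\ref{main thm: balanced coupons}(iii)) to this sub-process, the number of large coupons required to cover $V$ is $(1+o(1))\log n /(-\log(1-C\mu/n))$ whp. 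Dividing by the arrival rate $p$ converts this into real time; using the hypothesis $p \geq (1+\varepsilon)/C$ and the convexity of $x\mapsto-\log(1-x)$ (which gives $-\log(1-C\mu/n)\geq C(-\log(1-\mu/n))$), the real-time bound simplifies to $(1+o(1))\frac{\log n}{(1+\varepsilon)(-\log(1-\mu/n))}$, which is of the required form with, say, $\eta=\varepsilon/(2(1+\varepsilon))>0$.

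For (ii), the natural sub-process is the stream of \emph{block-coverage} events. For each $i\in[n']$, let $\tau_i$ be the first time a single coupon $X_j$ satisfies $V_i\subseteq X_j$. By hypothesis each such event occurs at rate at least $r := (1+\varepsilon)\frac{\log n'}{\log n}(-\log(1-\mu/n))$, so in the continuous framework $\tau_i$ is stochastically dominated by an $\mathrm{Exp}(r)$ variable. Since covering every block $V_i$ block-wise implies $C_t=V$, we have $T(\mathbf{X}) \leq \max_{i\leq n'}\tau_i$. A union bound gives $\mathbb{P}(\max_i\tau_i > t) \leq n' e^{-rt}$, which is $o(1)$ for $t=(1+o(1))\log n'/r$. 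Substituting the lower bound on $r$ yields $t = (1+o(1))\frac{\log n}{(1+\varepsilon)(-\log(1-\mu/n))}$, again of the required form with $\eta=\varepsilon/(2(1+\varepsilon))$.

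The main obstacle is part (i): one must carefully combine the hypothesis on $\mathbb{P}(|X|\geq C\mu)$ with a transitive-coverage bound applied to the large-coupon sub-process, and invoke convexity of $-\log(1-x)$ at the right moment to convert the enlarged coupon size $C\mu$ into a genuine speed-up by a constant factor $(1+\varepsilon)$. Part (ii), by contrast, is essentially a clean Erd{\H o}s--R\'enyi-style coupon-collecting argument on the smaller set of blocks and should require only a short union-bound calculation.
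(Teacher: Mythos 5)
Your proof is correct and follows essentially the same route as the paper: for (i), isolate the large-coupon sub-process, apply a first-moment upper bound to it, and convert back to real time via the arrival rate; for (ii), a union/first-moment bound on the block-covering times. One small correction on (i): \Cref{main thm: exchangeable uniform} and \Cref{main thm: balanced coupons}(iii) are not quite the right tools here, since their hypotheses (pairwise negative correlation, respectively subpolynomial coupon size $M=n^{o(1)}$) are not verified for the conditional law of $X$ given $|X|\geq C\mu$, and their two-sided concentration is more than you need. The one-sided upper bound you actually require is \Cref{proposition: elementary bounds} applied to that conditional law, which is balanced (transitivity is preserved under conditioning on $|X|$) with $q_\star\geq C\mu/n$; this is exactly how the paper proceeds, modulo a slightly different coupling (the paper sub-samples a uniformly random $C\mu$-subset of each large coupon rather than keeping the full large coupon, but both give the same bound).
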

\begin{proof}
Suppose condition (i) is satisfied. Let $\eta>0$ be a fixed positive number to be fixed later. We say that coupons of size at least $C\mu$ are \emph{large}, and we call other coupons \emph{small}. We couple $X$ with a transitive $C\mu$-uniform covering variable $Y$, by setting $Y$ to be a  $C\mu$-subset of $X$ chosen uniformly at random if $X$ is large, and to be the empty set otherwise. By Proposition~\ref{proposition: elementary bounds}, whp the $Y$-collector will need at most $(1+\eta) \frac{\log n}{-\log \left(1- \frac{C\mu}{n}\right)}$ non-empty coupons to cover $V$. Set $p =\frac{1+\varepsilon}{C}$. Let $t$ be an integer with 
\[\left(\frac{1+\eta}{1-\eta}\right) \left(\frac{1}{p}\right) \frac{\log n}{- \log \left(1 -\frac{C\mu}{n}\right) } \leq t \leq \left( 1-\eta\right) \frac{\log n}{- \log\left( 1-\frac{\mu}{n}\right)}.\]
Since the left hand side is at most $\frac{1+\eta}{1-\eta}\frac{1+o(1)}{1+\varepsilon}\frac{\log n}{-\log\left(1-\frac{\mu}{n}\right)}$, picking $\eta$ sufficiently small relative to $\varepsilon$ and $n$ sufficiently large, we can always do this. We claim that whp the $Y$-collector will have covered all of $V$ by time $t$. Indeed, the probability that $Y\neq\emptyset$ is, by assumption, at least $p$. By a standard Chernoff bound, 
the probability that at least $(1-\eta)pt$ of the first $t$ coupons of the $Y$-coupon collectors are non-empty is at least $1- e^{-\frac{\eta^2 pt}{3}}=1-o(1)$. (Here we use the fact that $pt=\Omega\left(\frac{\log n}{-\log\left(1-\frac{C\mu}{n}\right)}\right)\rightarrow \infty$ as $n\rightarrow \infty$.) Thus whp by time $t$ we have seen at least $(1-\eta)pt$ non-empty $Y$-coupons; since, by our choice of $t$, this is at least $(1+\eta)\frac{\log n}{ - \log \left(1 -\frac{C\mu}{n}\right) }$, whence whp these non-empty $Y$-coupons cover all of $V$. Our coupling of $Y$ with $X$ then implies that whp $T(\mathbf{X})\leq t$. Since by definition $t\leq \left( 1-\eta\right) \frac{\log n}{- \log\left( 1-\frac{\mu}{n}\right)}$, we conclude that $X$ is fast.

For the second part of the theorem, suppose condition (ii) is satisfied. We define a random covering variable $Z$ for $[n']$ as follows: set $Y=\{i: \ V_i \subseteq X\}$. Set $p =(1+\varepsilon)\frac{\log n'}{ \log n}\left(-\log \left(1-\frac{\mu}{n}\right)\right)$. Let $\eta>0$ be chosen sufficiently small so that $1+\varepsilon> \frac{1+\eta}{1-\eta}$. Let $t$ be an integer with 
\[ \frac{(1+\eta)\log n'}{p} \leq t \leq (1-\eta) \frac{\log n}{-\log \left(1-\frac{\mu}{n}\right)} .\]
By our choice of $\eta$, and for $n$ sufficiently large, we can always pick such a $t$. We claim that whp the $Y$-collector will have covered all of $[n']$ by time $t$. Indeed by condition (ii) the expected number of $i\in[n']$ not covered by the $Y$-coupon collector by time $t$ is
\begin{align*}
\sum_{i\in [n']} (1-\mathbb{P}(i\in Y))^t&\leq n' (1-p)^t\leq e^{-\eta \log (n')}=o(1),
\end{align*}
so that by Markov's inequality whp the $Y$-coupon collector has covered $[n']$ by time $t$. By the coupling of $Y$ with $X$, and the fact that $\bigcup_i V_i =V$, it follows that whp $T(\mathbf{X})\leq t$. Since we chose $t \leq (1-\eta) \frac{\log n}{-\log \left(1-\frac{\mu}{n}\right)}$, we conclude that $X$ is fast.
\end{proof}

Theorem~\ref{theorem: sufficient conditions for fast coverage} leaves a number of interesting questions open. To begin with, are there other, subtler ways of being fast than either winning the lottery or collecting a smaller coupon set? In particular, are there conditions on the pairwise intensities $(q_{xy})_{x,y \in V}$ which imply fast coverage? Furthermore, Theorem~\ref{theorem: sufficient conditions for fast coverage} says nothing on what the probable value of $T(\mathbf{X})$ actually is. In cases where $X$ is fast, can we determine good bounds for $\mathbb{E}T$? With its ties to the $k$-SAT problem (see the next section), this is one of the most important open problems related to this paper.

\section{Applications}\label{section: applications}

\subsection{Connectivity in random graphs}
We consider the discrete time multigraph process $(G_t)_{t\geq 0}$ obtained by starting with the empty graph $G_0$ on $V=[n]$ and at each time step $t\geq 1$ selecting an edge $uv$ uniformly at random and adding it to $G_{t-1}$ to form $G_t$. We associate $n/2$ coupon collectors $\mathbf{X}^i$ to this process, $1\leq i \leq \frac{n}{2}$. The $i^{\textrm{th}}$ such collector aims to cover each $i$-set $A$ with an edge from $A$ to $V\setminus A$. Since each edge $uv$ connects $2\binom{n-2}{i-1}$ $i$-sets to their complements in $V$, the $i^{\textrm{th}}$ collector is $2\binom{n-2}{i-1}$-uniform and balanced, and aims to cover a set of size $\binom{n}{i}$. By Proposition~\ref{proposition: elementary bounds}, we thus have that her covering time $T(\mathbf{X}^i)$ will be whp at most $(1+o(1))t_i$ where 
\begin{align*} 
t_i &= \frac{\log \binom{n}{i}} {-\log\left(1- \frac{2\binom{n-2}{i-1}}{\binom{n}{i}}\right)}= \frac{\log \binom{n}{i}}{-\log\left(1- \frac{i(n-i)}{\binom{n}{2}}\right)}.
\end{align*}
For $i=o(n)$, $t_i=t_1 - \frac{n\log i}{2} +o(n)$, while for $i=\theta(n)$ $t_i=O(t_1/\log n)$. Further by Proposition~\ref{proposition: elementary bounds} we know that for any fixed $\eta>0$  we have that $T(\mathbf{X}^i)> (1+\eta)t_i$ with probability at most $n^{-\eta}$. Also in the case $i=1$ the collector's random coupon variable is in fact exchangeable and $2$-uniform. By Theorem~\ref{thm: k-uniform exchangeable coupon collector, sublinear k}, for any $x> 0$
\begin{align*}
\Pr(T(\mathbf{X}^1)>t_1+\frac{xn}{2})&\leq e^{-x}(1+o(1)) &\quad{ and } \quad \Pr(T(\mathbf{X}^1)<t_1-\frac{xn}{2})&\leq e^{-x}(1+o(1)).
\end{align*}

Thus by the union bound we have that for any $x=x(n)>0$, 
\begin{align*}\Pr\left(\max_i  T(\mathbf{X}^i)>t_1+ \frac{xn}{2}\right)&\leq \sum_i \Pr\left(T(\mathbf{X}^i)> t_i\left(1+\frac{\log i +x}{\log n}\right)(1+o(1))\right)\\
&\leq (1+o(1))\sum_{i\geq 1} e^{-\log i +x} \leq (1+o(1))e^{-x}\log n  
\end{align*}

In particular, setting $x=\varepsilon \log n$, the inequality above together with our bound on $\mathbb{P}(T(\mathbf{X}^1)<t_1-xn)$ establishes the following:
\begin{theorem}\label{thm: connectivity of Gt} Let $\varepsilon>0$ be fixed. Then
\begin{align*} \Pr\left(G_t \textrm{ is connected} \right)&\leq n^{-\varepsilon+o(1)} \qquad& \textrm{for }t\leq \frac{n \log n}{2}(1-\varepsilon),\\
\Pr\left(G_t \textrm{ is connected} \right)&\geq 1-n^{-\varepsilon +o(1)} \qquad & \textrm{for }t\geq \frac{n \log n}{2}(1+\varepsilon).\end{align*}
\end{theorem}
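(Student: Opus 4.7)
The plan is to translate graph connectivity into the simultaneous completion of the $n/2$ coupon collectors $\mathbf{X}^i$ introduced above. Indeed $G_t$ is connected if and only if every non-empty proper subset $A \subseteq V$ with $|A| \leq n/2$ has at least one edge crossing to $V \setminus A$, which is precisely the condition that each $\mathbf{X}^i$ has completed its covering task by time $t$. Hence
\[\Pr(G_t \text{ connected}) = \Pr\left(\max_{1 \leq i \leq n/2} T(\mathbf{X}^i) \leq t\right),\]
and I would estimate this maximum from above and below separately.

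For the upper bound on $\Pr(G_t \text{ connected})$ (the regime $t \leq \tfrac{n\log n}{2}(1-\varepsilon)$) I would use only the $i=1$ collector, since the appearance of an isolated vertex immediately obstructs connectivity. Thus $\Pr(G_t \text{ connected}) \leq \Pr(T(\mathbf{X}^1) \leq t)$. Because $\mathbf{X}^1$ is a $2$-uniform exchangeable coupon collector on a ground set of size $n$, Theorem~\ref{thm: k-uniform exchangeable coupon collector, sublinear k} gives the sharp tail $\Pr(T(\mathbf{X}^1) < t_1 - \tfrac{xn}{2}) \leq (1+o(1))e^{-x}$; setting $x = \varepsilon \log n$ yields the desired $n^{-\varepsilon + o(1)}$.

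For the lower bound (the regime $t \geq \tfrac{n\log n}{2}(1+\varepsilon)$) I would union-bound over the $n/2$ collectors. The $i=1$ contribution is again controlled by Theorem~\ref{thm: k-uniform exchangeable coupon collector, sublinear k}, which supplies $\Pr(T(\mathbf{X}^1) > t_1 + \tfrac{xn}{2}) \leq (1+o(1))e^{-x}$. For $i \geq 2$ I would invoke the `what is more' statement of Proposition~\ref{proposition: elementary bounds} on the ground set of $\binom{n}{i}$ many $i$-cuts: it gives $\Pr(T(\mathbf{X}^i) > (1+\eta_i) t_i) \leq \binom{n}{i}^{-\eta_i}$. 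Since $t_i = t_1 - \tfrac{n \log i}{2} + o(n)$ for $i = o(n)$ and $t_i = O(t_1/\log n)$ for $i = \Theta(n)$, for each $i$ I can choose $\eta_i$ so that $(1+\eta_i)t_i$ equals the common threshold $t_1 + \tfrac{xn}{2}$; the resulting bound decays like $e^{-x}/i$, and summing over $i$ contributes at most $(1+o(1))e^{-x}\log n$. Setting $x = \varepsilon \log n$ and absorbing the extra $\log n$ factor into $n^{o(1)}$ closes the estimate.

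The main technical obstacle is managing the union bound over the collectors while preserving the sharp $n^{-\varepsilon}$ factor: crude use of Proposition~\ref{proposition: elementary bounds} at $i=1$ would only give probability $n^{-\varepsilon}$ for a single collector, and the accumulated logarithmic loss from summing over $i$ must be exactly offset by the gain $-\tfrac{n \log i}{2}$ in $t_i$. This is why Theorem~\ref{thm: k-uniform exchangeable coupon collector, sublinear k}, providing the exponential Erd\H{o}s--R\'enyi type tail for the $2$-uniform collector, is essential for the $i = 1$ term; the remaining terms comfortably admit the weaker polynomial tail from Proposition~\ref{proposition: elementary bounds}.
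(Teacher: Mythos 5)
Your proposal is correct and follows essentially the same approach as the paper's own proof: decomposing connectivity into the $n/2$ cut-collectors $\mathbf{X}^i$, using Theorem~\ref{thm: k-uniform exchangeable coupon collector, sublinear k} for the sharp two-sided tail of the $2$-uniform collector $\mathbf{X}^1$, applying the `what is more' part of Proposition~\ref{proposition: elementary bounds} to the remaining collectors on ground sets of size $\binom{n}{i}$, and performing a union bound with $x = \varepsilon\log n$. Your observation that the gain $-\tfrac{n\log i}{2}$ in $t_i$ precisely offsets the $\log n$ factor from summing over $i$, and that the Erd\H{o}s--R\'enyi-type tail is genuinely needed at $i=1$ (where the crude polynomial bound gives nothing extra), correctly identifies the point on which the estimate hinges.
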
 \qed

It is easy to relate $G_t$ to the \emph{size model} $G_{n,m}$ of random graphs obtained by selecting $m$-distinct edges uniformly at random and adding them to the empty graph on $n$ vertices. Indeed Markov's inequality shows that for $t=O(n\log n)$, whp $G_t$ contains only $O(\frac{t^2}{n^2})=O((\log n)^2)$ repeated edges, so one can couple $G_t$ with with $G_{n,m}$ up to the connectivity threshold for $G_t$ in such a way that $G_{n, t-O((\log n)^2)} \subseteq G_t \subseteq G_{n, t}$. In this way, Theorem~\ref{thm: connectivity of Gt} above allows us to recover (a slightly weaker form of) the classical results of Erd{\H o}s and R\'enyi~\cite{ErdosRenyi1960} on the connectivity threshold for $G_{n,m}$: 
whp $G_{n,m}$ becomes connected at size $m=(1+o(1))\frac{n \log n}{2}$.

\subsection{Covering a square with random discs}
We return to Example~\ref{example: covering a square with discs}. Let $V$ be the torus obtained by identifying the opposite sides of the square of area $n$ $[0, \sqrt{n}]^2\subset \mathbb{R}^2$, and let $X$ be the intersection of $V$ with the disc of radius $r=r(n)$ about a uniformly chosen random point $x\in V$. Draw a sequence $\mathbf{X} = (X_1, X_2, \ldots )$ of independent random subsets of $V$ distributed according to $X$. When does their union whp cover $V$? This is known as a \emph{coverage} problem, and is a continuous analogue of the coupon collector problem. Coverage problems have been widely studied in random geometric graph theory, with motivation coming from applications to wireless networks, especially sensor networks (see the introduction of~\cite{HaenggiSarkar13} for a history of coverage problems).

We discretise the problem and apply our results to show sharp concentration of the covering time $T=T(\mathbf{X})$ in the case where $r(n)$ is of order $o(\sqrt{n})$ and bounded away from $0$ (so the measure of $X$ is $O(\pi r^2)=o(n)$). 
Tile $V$ with squares of side length $s$, where $s=s(r,n)$ is chosen so that $s=o(r)$ and $\sqrt{n}/s \in \mathbb{N}$. Let $\mathcal{T}$ denote the collection of all the tiles; by construction, $\vert \mathcal{T}\vert =n/s^2$ Given a disc $D$ of radius $r$ in $V$, we let $I_{-}$ to be the collection of tiles wholly contained inside $D$, and $I_+$ to be the collection of tiles having non-empty intersection with $D$. The random variable $X$ gives rise, via $I_-$ and $I_+$, to two random variables $X_-$ and $X_+$ taking values among the subsets of $\mathcal{T}$.

For any $D$ as above, it is easy to show (see e.g. Lemma~8 of \cite{FalgasRavryWalters12}) that the boundary of $D$ meets at most $\frac{18\pi r} {s}$ tiles; thus $\vert I_-\vert$ and $\vert I \vert$ are both within $\frac{18 \pi r}{s}$ of $\frac{\vert D\vert}{s^2}=\frac{\pi r^2 }{s^2}$. 
 Both of $X_-$ and $X_+$ are clearly balanced random covering variables for $\mathcal{T}$.

By Theorem~\ref{main thm: balanced coupons} their covering times $T(\mathbf{X}_-)$ and $T(\mathbf{X}_+)$ are therefore whp concentrated around $\frac{\log (ns^{-2})}{-\log \left(1- \frac{\pi r^2}{n}\right)}=(1+o(1))\frac{n\log n}{\pi r^2}$. Since by construction of the random variable $X_-$ and $X_+$ we have that $T(\mathbf{X}_-)\leq T(\mathbf{X}) \leq T(\mathbf{X}_+)$, we deduce that whp the covering time for the torus $V$ satisfies $T(\mathbf{X})=(1+o(1))\frac{n\log n}{\pi r^2}$.

It is easy to adapt the argument above to show that the covering time does not change significantly if instead of a torus we try to cover a square $S$ of area $n$ with discs of radius $r$ centred at uniformly chosen random points in $S$. The random covering variables we use are no longer quite balanced: there are $O(\frac{r\sqrt{n}}{s^2})$ tiles within distance $r$ of the boundary of $S$, each of which is covered with probability at least $\frac{\pi r^2}{2n}(1+o(1))$, and $O(\frac{r^2}{s^2})$ tiles within distance $r$ of a corner of $S$, each of which is covered with probability at least $\frac{\pi r^2}{4n}(1+o(1))$. The first moment method shows both of these sets of `boundary tiles' are whp covered by the time we have drawn $(1+\varepsilon)\frac{n \log n}{\pi r^2}$ discs, while the `central tiles' at distance at least $r$ from the boundary are whp covered by that time by our result for the torus. This yields the following well-known result on covering processes (see~\cite{Hall88}).
\begin{theorem}\label{thm: covering squares or discs}
Let $V$ be a square or torus of area $n$. Let $X$ be the intersection of $V$ with a disc of radius $r$ about a uniformly chosen random point in $V$, where $r=r(n)$ is bounded away from $0$ and satisfied $r(n)=o(\sqrt{n})$. Then whp the covering time $T$ of the continuous $X$-coupon collector on $V$ satisfies $T(\mathbf{X})=(1+o(1))\frac{n}{\pi r^2}$.  
\end{theorem}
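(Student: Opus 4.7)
The plan is to discretise the continuous coverage problem onto a fine square grid, then sandwich the continuous covering time between two discrete covering times to which Theorem~\ref{main thm: balanced coupons} applies. Fix a scale $s=s(n)$ with $s=o(r)$, $s=\omega(1)$ and $\sqrt{n}/s\in\mathbb{N}$, and tile $V$ by closed axis-aligned squares of side $s$; call the resulting set of tiles $\mathcal{T}$, so $|\mathcal{T}|=n/s^2$. For each disc $X$ in the sequence, define the inner approximation $X_{-}$ as the set of tiles fully contained in $X$ and the outer approximation $X_{+}$ as the set of tiles meeting $X$. A standard perimeter estimate (as used just before the theorem statement) gives $|X_{+}|-|X_{-}|=O(r/s)$, which is $o(\pi r^2/s^2)$ since $s=o(r)$, so $|X_{-}|=|X_{+}|=(1+o(1))\pi r^2/s^2$.

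For the torus, translation invariance forces $X_{-}$ and $X_{+}$ to be balanced covering variables on $\mathcal{T}$. I then invoke Theorem~\ref{main thm: balanced coupons}, choosing $s$ so that the coupon size $\pi r^2/s^2$ is subpolynomial in $|\mathcal{T}|$; this yields sharp concentration of $T(\mathbf{X}_{-})$ and $T(\mathbf{X}_{+})$ around the same asymptotic value as claimed. Because $X_{-}\subseteq X\subseteq X_{+}$ holds path-wise under the natural coupling that uses the same disc centres for all three processes, the sandwich $T(\mathbf{X}_{-})\le T(\mathbf{X})\le T(\mathbf{X}_{+})$ follows, and concentration of $T(\mathbf{X})$ is immediate.

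For the square (non-torus) case the approximations $X_{\pm}$ are no longer balanced, since tiles within distance $r$ of $\partial V$ lie in strictly fewer discs than interior tiles. I would split $\mathcal{T}$ into \emph{interior} tiles (at distance $>r$ from $\partial V$) and \emph{boundary} tiles, the latter occupying a strip of area $O(r\sqrt{n})$. Interior tiles form a translation-invariant subsystem and are handled by the torus argument verbatim. For boundary tiles, each side-strip tile is covered by a given disc with probability at least $\pi r^2/(2n)$ and each corner-strip tile with probability at least $\pi r^2/(4n)$; a direct first-moment computation combined with Markov's inequality then shows that by the time predicted for interior tiles every boundary tile is whp covered as well. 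A union bound over the interior and boundary subsystems closes the argument.

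The chief technical obstacle is calibrating $s=s(n)$: it must be small enough relative to $r$ that the discretisation error $(|X_{+}|-|X_{-}|)/(\pi r^2/s^2)$ is $o(1)$, large enough that $|\mathcal{T}|\to\infty$, and such that $\pi r^2/s^2$ satisfies the coupon-size hypothesis of the applicable clause of Theorem~\ref{main thm: balanced coupons}. Since $r$ is bounded away from $0$ and $r=o(\sqrt{n})$, an appropriate $s$ always exists (a slowly growing function of $n$ tailored to how close $r$ is to $\sqrt{n}$); all remaining steps reduce either to a direct appeal to Theorem~\ref{main thm: balanced coupons} or to an elementary first-moment estimate on the boundary strip.
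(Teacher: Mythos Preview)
Your proposal is correct and follows essentially the same route as the paper: discretise via a fine square tiling, sandwich the continuous covering time between the discrete covering times of the inner and outer tile-approximations, invoke Theorem~\ref{main thm: balanced coupons} on the torus, and dispose of the boundary strip in the square case by a first-moment estimate.

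One small calibration slip: your requirement $s=\omega(1)$ is incompatible with $s=o(r)$ when $r$ stays bounded (which the hypotheses permit). Simply drop it; $|\mathcal{T}|=n/s^2\to\infty$ already follows from $s=o(r)$ and $r=o(\sqrt{n})$, and taking e.g.\ $s=r/\log n$ makes the coupon size $(\log n)^2$, subpolynomial in $|\mathcal{T}|$, so part~(iii) of Theorem~\ref{main thm: balanced coupons} applies across the whole range of $r$. Also note that the sandwich should read $T(\mathbf{X}_+)\le T(\mathbf{X})\le T(\mathbf{X}_-)$ (the larger coupon $X_+$ covers $\mathcal{T}$ no later than the smaller $X_-$); the paper itself writes this the same way you do, and it is harmless since both endpoints concentrate at the same value.
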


More generally, our argument in the torus adapts immediately to any balanced random covering variable $X$ taking values among the compact subsets of $V$ and satisfying with probability $1$ (i) $\vert X\vert =o (\vert V\vert)$, and (ii)  $\vert \partial X \vert =o (\vert X\vert)$, where $\vert \delta X\vert$ denotes the measure (length) of the boundary of $X$. For such $X$, we again have
\[T(\mathbf{X}) =(1+o(1))\frac{\vert V\vert \log \vert V\Vert }{\mathbb{E} \vert X\Vert }.\]
Thus we may replace `disc' in the results above by e.g. `ellipse', `annulus', `square', `polygon', or even let $X$ be given by a probability distribution on a finite collection of shapes having the same Lebesgue measure and satisfying the required isoperimetric inequality (ii). These are special cases of a celebrated result of Janson~\cite{Janson86}.

\subsection{Covering the edges of a graph by spanning trees, and matroids by bases}
Let $G$ be a connected edge-transitive graph, on $n$ vertices, of minimum degree $d$ and let $X$ be a spanning tree of $G$ drawn uniformly at random from the set of all such trees. Our goal is now to cover the edge set $E$ of $G$ with the edges of trees from $X$.

It is well known that the random spanning tree is pairwise negatively correlated, with respect to the edges,  in fact it satisfies the even stronger negative correlation property of being a Rayleigh measure on $E$, see~\cite{BBL}. 

So, from Theorem~\ref{main thm: balanced coupons} we can conclude that the covering time $T$ is sharply concentrated around
$ \frac{(nd/2)  \log(nd/2)}{n-1}  $, as long as $d\gg 1$.

Covering the edge set of a graph is a special case of the problem of covering the ground set of a matroid by random drawn bases of the matroid.  In \cite{FM} it was shown that a large class of matroids, the \emph{balanced} matroids, which contain the class of cycle matroids of a graph, have pairwise negative correlation.  In the same way as for trees we can conclude that if a balanced matroid of size $n$ has rank $r$ then the covering time $T$ is sharply concentrated around $n \log(n)/r$, as long as $\log(r)=o(\log(n))$.

\subsection{Random $k$-SAT}\label{subsection: applications to SAT}
The Random Boolean Satisfiability  (SAT) problem is the following. Given $n$ boolean variables $x_1, x_2, \ldots x_n$

and an integer sequence $k=k(n)$, we form a random clause $C=l_1\vee l_2 \vee \ldots \vee l_k$ by selecting a $k$-subset $\{y_1,y_2, \ldots y_k\}$ of literals uniformly at random, setting $l_i=y_i$ with probability $1/2$ and $l_i =\lnot y_i$ otherwise, independently for each $i$, and taking $C$ to be the join of the literals $l_i$. We now consider a sequence of independent, identically distributed random clauses $C_1, C_2, \ldots $, with distribution given by $C$, and define a sequence of logical formulae in conjunctive normal form $F_t= \bigwedge_{i=1}^t C_i$ for $t=0,1, \ldots$. For $n\rightarrow \infty$, the random $k$-SAT problem asks whether or not there exists whp an assignment of truth values to the variables $x_1, \ldots, x_n$ such that the logical formula $F_t$ is satisfied. The random $k$-SAT problem is of fundamental importance to theoretical computer science and has been extensively studied (see~\cite{C-O}).

Here we note that this problem is equivalent to determining the covering time of a coupon collector problem. The space of satisfying assignments for a formula consisting of $t$ clauses involving $n$ variables can be viewed as the complement of the union of $t$ subcubes of $\{0,1\}^n$. If each of those $t$ clauses involves exactly $k$ distinct literals (that is, if we are working with an instance of $k$-SAT), then each of those $t$ subcubes has dimension $n-k$. In particular, we can couple the sequence of iid clauses $C_1, C_2, \ldots$ with a sequence of independent coupons $X_1, X_2, \ldots $, with $X_i \sim X$, where $X$ is the random coupon given by selecting an $(n-k)$-dimensional subcube of the $n$-dimensional discrete hypercube $V=\{0,1\}^n$ uniformly at random. The formula $F_t$ is then satisfiable if and only if the $X$-coupon collector has failed to cover $V$ by time $t$.

The random variable $X$ is $2^{n-k}$-uniform and transitive. Proposition~\ref{proposition: elementary bounds} thus gives some elementary upper bounds on the satisfiability threshold $T(\mathbf{X})$ for $F_t$: for any $\varepsilon>0$, whp
\[T(\mathbf{X}) \leq (1+\varepsilon)\frac{\log 2^n }{-\log (1 -2^{-k})}= (1+\varepsilon) n \frac{\log 2}{-\log(1-2^{-k})} .\]

For $k(n)$ large enough this bound is in fact an equality, as first proven in \cite{FW}. Using Theorem \ref{main thm: balanced coupons}  we can obtain the same result.
\begin{theorem}
	Let $k=\log_2 n + \omega(n)$, where $\omega(n)\to \infty $, then whp $T(\mathbf{X})= (1+o(1)) n 2^k \log 2$
\end{theorem}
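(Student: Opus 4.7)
The plan is to obtain matching upper and lower bounds of $(1+o(1))n2^k\log 2$, using Proposition~\ref{proposition: elementary bounds} for the upper bound and the second moment method of Theorem~\ref{main thm: balanced coupons}(i) for the lower bound. Since $X$ is balanced with $q_\star=2^{-k}$ and $|V|=2^n$, and since the assumption $\omega(n)\to\infty$ gives $2^{-k}=1/(n\cdot 2^{\omega(n)})\to 0$ and hence $-\log(1-2^{-k})=(1+o(1))\,2^{-k}$, Proposition~\ref{proposition: elementary bounds} immediately yields $T\leq(1+o(1))n2^k\log 2$ whp.

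For the lower bound, fix $\varepsilon>0$ and set $t=(1-\varepsilon)n2^k\log 2$. The first-moment condition in Theorem~\ref{main thm: balanced coupons}(i), namely $\sum_x e^{-q_x t}=2^{\varepsilon n}\to\infty$, is automatic. The main task is to verify the variance condition $\sum_{x,y}(e^{q_{xy}t}-1)=o(|V|^2)$. The subcube structure makes this tractable: $q_{xy}$ depends only on the Hamming distance $d=d_H(x,y)$, with $q_{xy}=2^{-k}r_d$ and $r_d=\binom{n-d}{k}/\binom{n}{k}$ (the probability that a uniformly random $k$-subset of $[n]$ avoids the $d$ disagreement coordinates of $x,y$). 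Grouping pairs by Hamming distance, the variance condition reduces to showing
\[
\mathbb{E}\bigl[2^{(1-\varepsilon)n\,r_D}\bigr]=1+o(1), \qquad D\sim\mathrm{Bin}(n,1/2).
\]

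I would prove this by splitting $D$ into a bulk regime (where $r_D\leq C\cdot 2^{-k}$, so that $(1-\varepsilon)n\,r_D=O(1/2^{\omega(n)})=o(1)$ and the integrand equals $1+o(1)$; the bulk captures probability $1-o(1)$ and contributes $1+o(1)$) and a tail regime. For the tail, I would combine the Chernoff bound $\mathbb{P}(D=d)\leq 2^{n(H(d/n)-1)}$ with the crude estimate $r_d\leq(1-d/n)^k$, reducing to the study of $\phi(\rho):=H(\rho)-1+(1-\varepsilon)(1-\rho)^k$. The hypothesis $\omega(n)\to\infty$ (equivalently $n\cdot 2^{-k}\to 0$) is exactly what is needed to ensure $\phi(\rho)\leq -c<0$ for $\rho$ bounded away from $1/2$, yielding a tail contribution of $O(n\cdot 2^{-cn})=o(1)$.

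The main obstacle is the delicate transition near $\rho=1/2$, where the gap $1-H(\rho)$ vanishes quadratically while $(1-\rho)^k$ only decays to order $2^{-k}$; in a window of width $\Theta(2^{-k/2})$ around $1/2$ the crude entropy bound is too weak. I would handle this by invoking the local central limit theorem for $\mathrm{Bin}(n,1/2)$, which provides an extra factor of $\Theta(1/\sqrt{n})$ in $\mathbb{P}(D=d)$ and allows one to show the window's contribution is $O(\sqrt n/2^{\omega(n)/2})=o(1)$. Combining the bulk, tail, and transition contributions gives $\mathbb{E}[2^{(1-\varepsilon)n\,r_D}]=1+o(1)$, and Theorem~\ref{main thm: balanced coupons}(i) then delivers $T\geq(1-\varepsilon)n2^k\log 2$ whp. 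Since $\varepsilon>0$ was arbitrary, $T=(1+o(1))n2^k\log 2$ whp.
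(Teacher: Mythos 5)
Your overall strategy matches the paper's exactly: first-moment upper bound (essentially Proposition~\ref{proposition: elementary bounds}), second-moment lower bound via Theorem~\ref{main thm: balanced coupons}(i), exploitation of transitivity to reduce to a sum over Hamming-distance classes, and the correct formula $q_{xy}=2^{-k}\binom{n-d}{k}/\binom{n}{k}$. Your rephrasing of the variance condition as $\mathbb{E}\bigl[2^{(1-\varepsilon)n\,r_D}\bigr]=1+o(1)$ for $D\sim\mathrm{Bin}(n,1/2)$ is a clean restatement of what the paper calls $S=o(N)$, and your bulk/tail decomposition parallels the paper's split into the three Hamming-distance ranges $i\geq n/2-n/k$, $n\ln k/(k-1)\leq i<n/2-n/k$, and $i<n\ln k/(k-1)$.

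Where your account differs, it is slightly off-target. The ``transition window'' you identify, of width $\Theta(2^{-k/2})$ around $\rho=1/2$, is actually contained inside your own bulk: $r_d\leq C2^{-k}$ holds out to $|d-n/2|\lesssim n/k$ (for $k=o(\sqrt n)$), which is much wider than $n2^{-k/2}$, so the local CLT refinement is being applied to a range already controlled by the trivial bulk estimate $2^{(1-\varepsilon)nr_D}-1=O(n2^{-k})=o(1)$. The genuinely delicate intermediate regime is further out --- between the edge of the bulk and the point where the entropy bound begins to give genuinely exponential decay --- and that is precisely what the paper handles with the (unexpanded) ``convexity argument'' over $n\ln k/(k-1)\leq i<n/2-n/k$. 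Your sketch does not address this range explicitly, and your claim that $\phi(\rho)\leq-c<0$ for all $\rho$ bounded away from $1/2$ gives no uniform constant once $\rho$ drifts up to $1/2 - O(1/k)$. None of this is an irreparable conceptual error --- a careful accounting using the normal approximation you mention does plausibly close the gap, since in the problematic range the integrand $2^{(1-\varepsilon)nr_d}-1$ is still $o(1)$ pointwise --- but it is a real bookkeeping imprecision: the case boundaries and the tool (LCLT) are deployed in the wrong window, whereas the paper's three-case split is aligned with where the estimates actually change character.
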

\begin{proof}
	Let $N=2^n$ be the number of vertices in the hypercube $Q_n$, our base set. We shall show condition (i) in  Theorem~\ref{main thm: balanced coupons} is satisfied to deduce the claimed sharp concentration result for $T(\mathbf{X})$. Checking that (i) holds is a matter of simple computations.  Most of the estimates needed here are standard so we only sketch  the argument.

	We note first of all that in our setting, for any pair of vertices $x$ and $y$ at Hamming distance $i$ in the hypercube $Q_n$,  \[q_{xy}=\frac{{n-i \choose k}}{2^k{n \choose k}}\] By symmetry, condition (i) is equivalent to
	\[S=\sum_{y\neq 0}  (\exp(tq_{0,y})-1) = o(N).\]

	Now the threshold for $k$-satisfiability we shall obtain from Theorem~\ref{main thm: balanced coupons} (which is the first moment threshold) is $N\log N/ (N/2^k)=2^kn \log 2$. We therefore let $t = 2^kn\log 2  \cdot(1- \delta_n)$ for some 
	$\delta_n = o(1)$ to be determined later.
	Now
	\begin{align*}
	S
	&=\sum_{y}  [-1+ \exp(n\log 2  \cdot(1- \delta_n) \cdot 2^k q_{0,y})]
	\\
	&=\sum_{i=1}^{n-k} {n \choose i } \left[-1+ \exp \Big(n\log 2  \cdot(1- \delta_n)  \cdot \frac{\binom{n-i}{k}}{\binom{n}{k}} \Big) \right]
	\end{align*}
		
	Let $a_i$ be the $i$:th term of this sum.
	We deal separately with the three cases  $i\geq\frac{n}{2} - \frac{n}{k}$, $\frac{n\ln k}{k-1}\leq i<\frac{n}{2} - \frac{n}{k}$ and $i<\frac{n\ln k}{k-1}$. In the first case, 
	we change the summation index so that $i = \frac{n}{2}-j$. Note that
	\[
		\frac{{n-i \choose k}}{{n \choose k}} \leq \big(1-\frac{i}{n}\big)^k = 2^{-k}\big(1+\frac{2j}{n}\big)^k \leq 2^{-k}\exp\Big(\frac{2jk}{n}\Big)
	\]
	
	Summing over all $j$ such that $-\frac{n}{2}\leq j<\frac{n}{k}$ we get that
	\begin{align*}
	2^{-n}\sum_{i= \frac{n}{2}-\frac{n}{k}}^{n-k} a_i
	&=2^{-n}\sum_{j=k-\frac{n}{2}}^{\frac{n}{k}} {n \choose \frac{n}{2}-j } \left[-1+ \exp \Big(n\log(2)  \cdot \frac{{\frac{n}{2}-j \choose k}}{{n \choose k}} \Big) \right]
	\\
	&\leq -1+\exp(2^{-\omega(n)}e^{2}) = o(1)
\end{align*}

	In the second case a convexity argument shows that 
	\[
	2^{-n} \sum_{i=\frac{n \ln k}{k-1}}^{\frac{n}{2}-\frac{n}{k}} a_i
	\leq \exp\Big(\log n -\frac{2n}{k^2}+o(1)  \Big)
	= o(1)
	\]	
	
	Finally, for $i \leq \frac{n\log k}{k-1}$, coarser bounds suffice: ${\binom{n-i}{k}/\binom{n}{k} \leq 1}$ and $\log \binom{n}{i} \leq 2i \log(n/i)$. Thus
	\[
	2^{-n} \sum_{i=1}^{\frac{n \log k}{k-1}} a_i
	\leq 2^{-n} \sum_{i=1}^{\frac{n \log k}{k-1}} \exp\Big(2i \log\big(\frac{n}{i}\big)+ n \log(2)(1-\delta_n) \Big)
	= n\exp\Big(n\Big[\frac{2\log(k)^2}{k}-\log(2)\delta_n\Big]\Big),
	\]
	which is $o(1)$ provided $\log(2) n\delta_n - \frac{2n\log(k)^2}{k} - \log(n) \to \infty$; this is satisfied for instance if we choose $\delta_n = n^{-\frac{1}{2}}$.

	Together these three cases, and the choice of $\delta_n$ above give that $2^{-n}\sum_{i=1}^{n-k} a_i = o(1)$, or in other words that $S=o(N)$, and condition (i) is satisfied (since $S=o(N)$ and $\sum_x e^{-q_x t} =e^{N\delta_n}=\omega(1)$). The result is then immediate from Theorem~\ref{main thm: balanced coupons}.
\end{proof}

For constant $k$ the simple first moment bound does not give the correct value for the satisfiability threshold. For $k=3$ our  simple upper bound is $T(\mathbf{X})\leq\left(5.190\ldots +o(1)\right)n$ and it has been shown that  $T(\mathbf{X})\leq 4.506n$, \cite{DBM}. Heuristics based on spin-glass theory has lead to the conjecture that the correct threshold is  $4.267\ldots n$, see \cite{MZ}. The well known  satisfiability conjecture states that for each $k$ there exists a constant $c_k$ such that the threshold for random $k$-SAT is $c_k n$. Recently a  proof of this conjecture for sufficiently large values of $k$ has been announced \cite{DSS}. It is also known \cite{C-O} that as $k$ increases the threshold location scales as $2^k \ln 2-1/2 (1 + \ln 2) + o_k(1)$, thus matching to leading order the bound given by the coupon collector.

\section{Concluding remarks}
Another natural coupon collector problem is the $q$-colourability of the uniform random graph. Here the set $V$ we are covering is the set of all strings of length $n$ over the alphabet $[q]$. Each string is interpreted as a vertex colouring of an $n$ vertex graph.  For each edge $e$ in the 
complete graph on $n$ vertices we create a coupon consisting of all colourings in which the endpoints of $e$ have the same colour.  The covering time $T$ for this coupon process now corresponds to the threshold for a uniform random graph of size $T$ on $n$ vertices ceasing to be $q$-colourable.

This coupon collector process is not balanced, but the covering time is essentially determined by the covering time of the almost balanced coupons so one can restrict to that 
subcase without loss of generality. Denote by $X$ the random coupon variable associated with the process; $X$ has size $q^{-1} \vert V\vert $ 
and is transitive and uniform, but is very much non-exchangeable: there are both strong positive and strong negative correlations between the 
various colourings, so that our Theorems~\ref{main thm: balanced coupons}, ~\ref{main thm:  balanced k-uniform} do not apply. For $q=2$, it is 
known that the covering time $T(\mathbf{X})$ is \emph{not} sharply concentrated. This stands in contrast with the situation for $q\geq 3$: in \cite{AN05} it was proven that the chromatic number of a random graph with edge probability $p=\frac{c}{n}$ has two possible values, and for 
all but a discrete sequence of values for $c$  whp only one value. This result would follow directly from a sharp threshold result for 
the coupon collector process described above.

A natural question is then whether any transitive, $cn$-uniform random  coupon variable $X$ with $c>0$ sufficiently small has sharp concentration of $T(\mathbf{X})$, i.e. whether random $q$-colouring threshold for large $q$ is determined by general coupon collector results 
(as opposed to specific structural features of the random colouring setting).

In a different direction, much remains to be done on the case of fast coupon collectors, as remarked at the end of Section~\ref{section: fast coverage}. The $k$-SAT problem for small $k$ gives us an example of a transitive, uniform and linear-sized coupon collector which is fast. The difficulty of that problem suggests the rigorous study of fast coupon collectors will be hard in general. Nevertheless we feel that the following problems are well-motivated, and for $\mu$ small enough may prove tractable. 
\begin{problem}
Let $X$ be a $\mu$-uniform transitive random covering variable for an $n$-set $V$
\begin{enumerate}
\item Give estimates for the value of $T_{\frac{1}{2}}$ in terms of $\mu$ and the pairwise intensities $q_{xy}=\Pr(\{x,y\} \subseteq X)$, $x,y \in V$;
\item Give sufficient conditions for $T(\mathbf{X})$ to be sharply concentrated about $T_{\frac{1}{2}}$. 
\end{enumerate}
\end{problem}


\end{document}